\newtheorem{theorem}{\bf Theorem}[section]
\newcommand{\proofend}{\hfill $\Box$ \vspace{2mm}}
\newtheorem{Prop}{Proposition}
\newtheorem{lemma}{Lemma}
\newtheorem{remark}{Remark}
\newcommand{\R}{\mathbb{R}}
\newcommand{\bv}{\boldsymbol{v}}
\newcommand{\be}{\boldsymbol{e}}
\newcommand{\bI}{\boldsymbol{I}}
\newcommand{\mG}{\mathcal{G}}
\newcommand{\mH}{\boldsymbol{\mathcal{H}}}
\newcommand{\bk}{{\boldsymbol{k}}}
\newcommand{\beps}{{\boldsymbol{\varepsilon}}}
\newcommand{\bgam}{{\boldsymbol{\gamma}}}
\newcommand{\bsig}{{\boldsymbol{\sigma}}}
\newcommand{\bmu}{{\boldsymbol{\mu}}}
\newcommand{\brho}{{\boldsymbol{\rho}}}
\newcommand{\bchi}{{\boldsymbol{\chi}}}
\newcommand{\boldeta}{{\boldsymbol{\eta}}}
\newcommand{\balpha}{{\boldsymbol{\alpha}}}
\newcommand{\bzeta}{{\boldsymbol{\zeta}}}
\newcommand{\bx}{\boldsymbol{x}}
\newcommand{\by}{\boldsymbol{y}}
\newcommand{\bg}{\boldsymbol{g}}
\newcommand{\bzero}{\boldsymbol{0}}
\newcommand{\bgamma}{\boldsymbol{\gamma}}
\newcommand{\bnu}{\boldsymbol{\nu}}
\newcommand{\vel}{\langle\tilde{\textrm{\slshape{v}}}\rangle}
\newcommand{\tran}{^{\mbox{\tiny T}}}
\newcommand{\hh}{\hspace*{0.7pt}}
\newcommand{\ww}{w}
\begin{document}

%%%% Article title to be placed here
\title{On the dynamic homogenization of periodic media: Willis' approach versus two-scale paradigm}
 \pagenumbering{arabic}
\author{
Shixu Meng\footnote{Institute for Mathematics and its Applications, University of Minnesota, Twin Cities, U.S.
\;(shixumen@umich.edu)}
\quad
and
\quad
Bojan Guzina\footnote{Department of Civil, Environmental \& Geo-Engineering, University of Minnesota, Twin Cities, U.S.
\;(guzin001@umn.edu)}
}
\date{}
\maketitle
%\author{Shixu Meng$^{1}$ and Bojan B. Guzina$^{2}$}
%
%%%%%%%%%% Insert author address here
%\address{$^{1}$Institute for Mathematics and its Applications, University of Minnesota, Twin Cities, U.S.\\
%$^{2}$Department of Civil, Environmental \& Geo- Engineering, University of Minnesota, Twin Cities, U.S.}
%
%%%%% Subject entries to be placed here %%%%
%\subject{Applied mathematics, wave motion}
%
%%%%% Keyword entries to be placed here %%%%
%\keywords{Waves in periodic media, dynamic homogenization, Willis' effective model, two-scale homogenization}
%
%%%%% Insert corresponding author and its email address}
%\corres{Bojan Guzina \\ \email{guzin001@umn.edu}}

\begin{abstract}
When considering an effective i.e. homogenized description of waves in periodic media that transcends the usual quasi-static approximation, there are generally two schools of thought: (i) the two-scale approach that is prevalent in mathematics, and (ii) the Willis' homogenization framework that has been gaining popularity in engineering and physical sciences. Notwithstanding a mounting body of literature on the two competing paradigms, a clear understanding of their relationship is still lacking. In this study we deploy an effective impedance of the scalar wave equation as a lens for comparison and establish a low-frequency, long-wavelength (LF-LW) dispersive expansion of the Willis effective model, including terms up to the second order. Despite the intuitive expectation that such obtained effective impedance coincides with its two-scale counterpart, we find that the two descriptions \emph{differ by a modulation factor} which is, up to the second order, expressible as a polynomial in frequency and wavenumber. We track down this inconsistency to the fact that the two-scale expansion is commonly restricted to the \emph{free-wave} solutions and thus fails to account for the body source term which, as it turns out, must also be homogenized -- by the reciprocal of the featured modulation factor. In the analysis, we also (i) reformulate for generality the Willis' effective description in terms of the eigenfunction approach, and (ii) obtain the corresponding modulation factor for \emph{dipole} body sources, which may be relevant to some recent efforts to manipulate waves in metamaterials.
\end{abstract}

\maketitle

\section{Introduction}  
\label{Introduction}
In recent years, periodic composites have been used with remarkable success to manipulate waves toward achieving cloaking, sub-wavelength imaging, and noise control~\cite{MBW06,ZC11,M13} thanks to the underpinning phenomena of frequency-dependent anisotropy and band gaps~\cite{B53}. Commonly the analyses of waves in unbounded periodic media are based on the Floquet-Bloch analysis~\cite{K12} which yields the germane dispersion surfaces, including frequency bands where the free-wave solutions cannot exist. The full understanding of wave interaction with \emph{bounded} periodic domains, however, requires the solution of a relevant boundary value problem~\cite{CGM16}. In situations where the wavelength exceeds the characteristic length scale of medium periodicity~\cite{FG08}, one is compelled to both (i)~gain the physical intuition and (ii)~reduce the computational effort by considering an \emph{effective} i.e. ``macroscopic'' description of the wave motion. Naturally, such an idea raises the fundamental question of the (enriched) governing equation for the mean fields. 

One keen approach to the macroscopic wave description, that has attracted major attention in recent years~\cite{MW07, MBW06, NHA15, NHA16, NSK12, W97, W09, W11}, is the concept of effective constitutive relationships -- proposed by Willis in the early 1980s \cite{W80, W81a, W81b, W83, W84, W85}. In this framework that is often formulated via plane-wave expansion, the non-local effects due to microstructure are encoded in a frequency- and wavenumber-dependent constitutive law that features the coupling terms linking (a)~stress to particle velocity, and (b)~momentum density to strain. Typically, such effective constitutive law is derived via the Green's function approach~\cite{NHA15,W11} that may exhibit instabilities when the frequency-wavenumber pair resides on a dispersion branch~\cite{NSK12}. When considering the space-time formulation, the Willis' model leads to an integro-differential governing equation for the mean fields, whose kernels are given by the inverse Fourier transforms of the effective constitutive parameters. The major appeal of this framework, however, resides in the fact that the Willis' model can be deemed \emph{exact}\cite{NSK12}, since no approximations -- and in particular no asymptotic expansions -- are made in the derivation. In this vein, the Willis' theory carries the potential of capturing the mean wave motion \emph{beyond} the first (i.e. ``acoustic'') dispersion branch, see Fig.~\ref{HomogenizableFigure} for example. 

Within the framework of applied mathematics, on the other hand, the standard approach to extracting effective wave motion at long wavelengths is that of (asymptotic) two-scale homogenization~\cite{B76,PBL78,A92}, where the perturbation parameter signifies the ratio between the unit cell of periodicity and wavelength. By considering the leading-order approximation~\cite{B74,PA06}, one inherently arrives at the quasi-static effective model, where the periodic coefficients in the original field equation are superseded by suitable constants (the so-called effective medium properties). To capture the incipient dispersive effects -- as carried by the acoustic branch, higher-order asymptotic expansions of the effective wave motion were considered e.g. in~\cite{BA93,CF01,AB08,WG15}, resulting in a (constant-coefficient) singular perturbation of the germane field equation.

So far, however, the connection between the Willis' effective model and the two-scale approach to dynamic homogenization is less than clear. For instance in~\cite{NHA16} the authors pursued the long-wavelength, low-frequency (LW-LF) asymptotic expansion of the Willis' model and demonstrated, to the leading order, that such approximation recovers the quasi-static result of two-scale homogenization. This poses the fundamental question: do the two formulations still agree at higher orders of approximation -- which carry the dispersion effects? Indeed we shall show for the first time that the two approximations \emph{differ} at the second order. In particular, we demonstrate that the second-order Willis' and two-scale impedance functions differ by a modulation factor, expressible as a polynomial in the wavenumber-frequency domain. We rigorously link this inconsistency to the fact that the two-scale homogenization is commonly restricted to the free-wave solutions and thus fails to account for the body source term which, as it turns out, must also be homogenized. To begin the analysis, however, we first reformulate the Willis' effective model via the eigenfunction approach which has the benefits of (i)~maintaining the stability across dispersion curves, and (ii)~providing a deeper understanding of the phenomena of crossing dispersion curves and eigenmodes of zero mean that are invisible to the effective model.

Through this investigation, we help establish a rigorous mathematical connection between the two mainstream approaches to dynamic homogenization, and we equip the two-scale approach to handle (monopole and dipole) body sources that may help further manipulate waves in periodic structures~\cite{CW13,NCN17}. The results of this study may also be useful toward extending the applicability of two-scale homogenization beyond the acoustic branch, and tackling the emerging subject of deep sub-wavelength sensing -- where the macroscopic waves are used to interrogate the underpinning microstructure~\cite{RSB15}. 

\section{Preliminaries}

With reference to an orthonormal vector basis~$\be_j$ ($j\!=\!\overline{1,d}$), consider the time-harmonic wave equation
\begin{equation}\label{PDE}
-\omega^2\rho(\bx)u - \nabla\cdot\big(G(\bx)(\nabla u-\boldsymbol{\gamma})\big) ~=~ f \qquad\text{in~~}\mathbb{R}^d, \quad d=1,2,3
\end{equation}
at frequency~$\omega$, where~$G$ and~$\rho$ are $Y$-periodic; 
\[
Y \,=\, \{\bx\!:\,0< \bx\!\cdot\!\be_j < \ell_j; \, j=\overline{1,d}\} 
\]
is the unit cell illustrated in Fig.~\ref{HomogenizableFigure}a), and~$f(\bx)$ (resp.~$\bgamma(\bx)$) denotes the monopole (resp. dipole) source term.  In what follows, $G$ nd~$\rho$ are further assumed to be real-valued $L^\infty(Y)$ functions bounded away from zero. To facilitate the discussion, one may conveniently interpret~(\ref{PDE}) in the context of elasticity and anti-plane shear waves, in which case~$u,G,\rho,f$ and~$\bgamma$ take respectively the meanings of transverse displacement, shear modulus, mass density, body force, and eigenstrain. 

Recalling the plane wave expansion approach~\cite{NHA15, NHA16, NSK12}, consider next the Bloch-wave solutions of the form $u(\bx) = \tilde{u}(\bx) e^ {i\bk\cdot\bx}$, where~$\tilde{u}$ is $Y$-periodic and depends implicitly on~$\bk$ and~$\omega$ -- which are hereon assumed to be fixed. If further the source terms are taken in the form of (i)~plane-wave body force $f(\bx) = \tilde{f} e^ {i\bk\cdot\bx}$ and (ii)~eigenstrain field $\bgam(\bx) = \tilde{\bgam} e^ {i\bk\cdot\bx}$ where $\tilde{f}$ and~$\tilde{\bgam}$ are constants, (\ref{PDE}) reduces to 
\begin{eqnarray} \label{CellPDE}
-\omega^2 \rho(\bx) \tilde{u} - \nabla_{\!\bk} \!\cdot\! \big( G(\bx)(\nabla _\bk\tilde{u} - \tilde\bgam)  \big) ~=~ \tilde{f} \quad \mbox{in~~} Y,
\end{eqnarray}
where $\nabla_{\!\bk}= \nabla + i \bk$. Here we note that (i)  $\tilde{f}$ and~$\tilde{\bgam}$ can be interpreted as the respective Fourier components of~$f$ and~$\bgamma$ at fixed wavenumber~$\bk$, and (ii) the appearance of eigenstrain~$\tilde{\bgam}$ helps guarantee the uniqueness of the Willis' homogenized description of~(\ref{CellPDE}), see~\cite{W11} for details. For completeness, the periodic boundary conditions accompanying~(\ref{CellPDE}) can be explicitly written as 
\begin{equation} \label{CellPDEBC1}
\begin{array}{rcl}
\tilde{u}|_{x_j=0} &\!\!\!=\!\!\!& \tilde{u}|_{x_j=\ell_j} \\*[2mm]
G(\nabla_{\!\bk}\tilde{u}-\tilde\bgam) \cdot \bnu|_{x_j=0} &\!\!\!=\!\!\!& -G(\nabla_{\!\bk}\tilde{u} - \tilde\bgam) \cdot \bnu|_{x_j=\ell_j}
\end{array}, \qquad j=\overline{1,d}
\end{equation}
where $x_j=\bx\!\cdot\!\be_j$ and $\bnu$ is the unit outward normal on~$\partial Y$. 

\subsection{Willis' effective description of the wave motion}

In the context of anti-plane shear waves, the respective expressions for strain, particle velocity, stress, and momentum density affiliated with~$\tilde{u}$ read 
\begin{equation}
\tilde{\beps} =\nabla_{\!\bk} \tilde{u}, \qquad \tilde{\textrm{\slshape{v}}} = -i \omega \tilde{u}, \qquad \tilde{\bsig} = G( \tilde{\beps} -\tilde\bgam), \qquad \tilde{p} = \rho \tilde{\textrm{\slshape{v}}}, \label{EffectiveRelations}
\end{equation}
which permit~(\ref{CellPDE}) to be rewritten as $-i\omega\tilde{p} -\!\nabla_{\!\bk} \!\cdot\! \tilde{\bsig} = \tilde{f}$. Thanks to~(\ref{CellPDEBC1}), averaging the latter over $Y$ yields the mean-fields equation
\begin{equation} \label{CellPDEEffectiveFieldsEquation}
-i\omega\langle\tilde{p}\rangle -i{\bk}\!\cdot\!\langle\tilde{\bsig}\rangle ~=~ \tilde{f},
\end{equation}
where $\langle\cdot\rangle$ denotes the $Y$-average of an $L^1(Y)$ function. In this setting, the goal is to obtain the counterpart of~\eqref{CellPDEEffectiveFieldsEquation} in terms of the mean motion~$\langle \tilde{u}\rangle$, and to explore its properties. This is accomplished in a consistent way~\cite{W80,W81a,W81b,W83,W84,W85} by introducing the Willis' effective constitutive relationship, which links the \emph{mean values} of the entries in~\eqref{EffectiveRelations} as 
\begin{eqnarray} \label{IntroEffectiveWillis}
\left[ \begin{array}{c}
 \langle \tilde{\bsig} \rangle\\
\langle \tilde{p} \rangle\end{array} \right] =
\left[ \begin{array}{cc}
\tilde{\boldsymbol{C}}^e (\bk,\omega)  & \tilde{\boldsymbol{S}}^{e\mbox{\tiny{1}}}(\bk,\omega) \\
\tilde{\boldsymbol{S}}^{e\mbox{\tiny{2}}}(\bk,\omega) & \tilde{\rho}^e (\bk,\omega) \end{array} \right] 
\left[ \begin{array}{c}
 \langle \tilde{\beps}  - \tilde\bgam \rangle\\
\vel \end{array} \right]. 
\end{eqnarray}
Here $\tilde{\boldsymbol{C}}^e$ and $\tilde{\rho}^e$ denote respectively the effective elasticity tensor and mass density, while~$\tilde{\boldsymbol{S}}^{e\mbox{\tiny{1}}}$ and~$\tilde{\boldsymbol{S}}^{e\mbox{\tiny{2}}}$ are the corresponding coupling vectors -- reflecting the non-local nature of the effective constitutive behavior. 

As examined in~\cite{NHA15}, an effective description of the mean wave motion via~\eqref{CellPDEEffectiveFieldsEquation} and~\eqref{IntroEffectiveWillis} makes sense only if the pair $(\bk,\omega)$ meets the \emph{necessary conditions} for homogenization in that 
\begin{equation}\label{necohom}
\bk \in \hat{Y}, \qquad \omega \,\lesssim\, \max_{\bx\in Y}\sqrt{\frac{G(\bx)}{\rho(\bx)}} \, \max_{\bk\in \hat{Y}}\|\bk\|, 
\end{equation}
where~$\hat{Y}\ni\boldsymbol{0}$ denotes the first Brioullin zone, given by the reciprocal of the unit cell~$Y$ in the Fourier $\bk$-space. In the context of~\eqref{PDE} and the plane-wave expansion approach, the first condition in~\eqref{necohom} implicitly requires that the Fourier spectrum of~$f(\bx)$ be restricted to~$\hat{Y}$. The above necessary conditions are schematically illustrated in Fig.~\ref{HomogenizableFigure}b) assuming~$d=1$ in~\eqref{PDE}, for which~$\bk=k$ and $\hat{Y}=\{k:|k|<\pi\}$. Depending on the local variation of the shear wave speed inside~$Y$, the second restriction in~\eqref{necohom} is such that the homogenizable region in the~$(\bk,\omega)$ space includes the acoustic branch and possibly the first optical branch, see~\cite{SN14,NHA15} for discussion. 
\begin{figure}
\centering\includegraphics[width=0.85\textwidth]{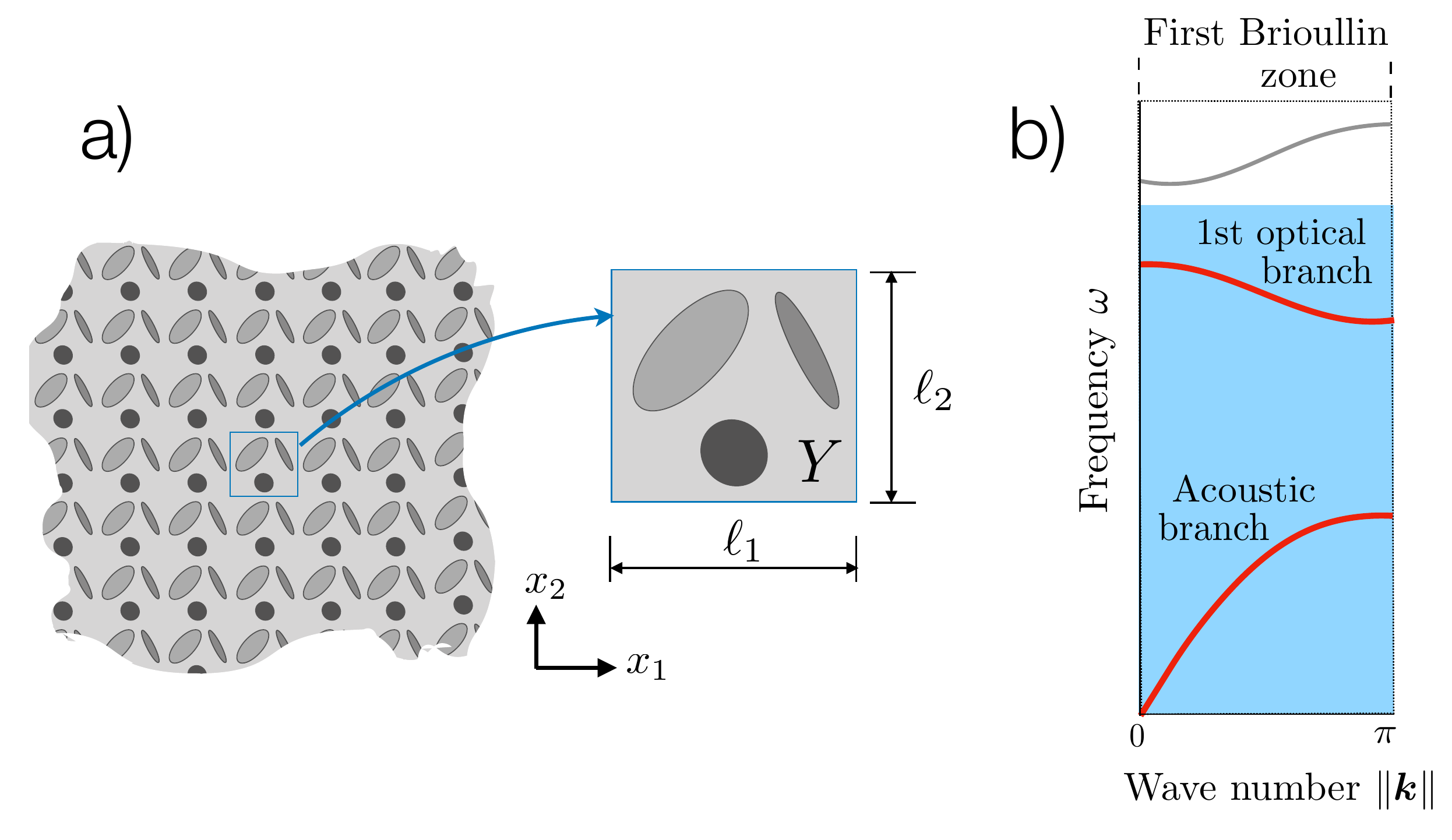} \vspace*{-2mm}
\caption{Homogenization of wave motion in periodic media: a) example of a periodic structure in~$\mathbb{R}^2$, and b) schematics of the homogenizable region (for a periodic structure in~$\mathbb{R}^1$) given by the shaded area in the $(\omega,\bk)$ space.} 
\label{HomogenizableFigure}
\end{figure}

A salient feature of the Willis' effective model~\eqref{CellPDEEffectiveFieldsEquation} and~\eqref{IntroEffectiveWillis} is that (barring a degenerate case to be examined later) the germane dispersion relationship~$\mathcal{D}^e(\bk,\omega)=0$, which permits non-trivial~$\langle\tilde{u}\rangle$ for $\tilde{f}=0$, recovers \emph{exactly}~\cite{NSK12} its antecedent~$\mathcal{D}(\bk,\omega)=0$ -- allowing for non-zero~$\tilde{u}$ when $\tilde{f}=0$ in~\eqref{CellPDE}. 

In principle, the suitability of~\eqref{IntroEffectiveWillis} as a mean-fields descriptor and the germane expressions for $\tilde{\boldsymbol{C}}^e\!, \tilde{\rho}^e, \tilde{\boldsymbol{S}}^{e\mbox{\tiny{1}}}$ and~$\tilde{\boldsymbol{S}}^{e\mbox{\tiny{2}}}$ are established by (i)~expressing~$\tilde{u}$ in~\eqref{CellPDE} via the Green's function for the unit cell~$Y$, and (ii) computing the Y-average of such result~\cite{NSK12,NHA15}. Typically, this leads to a complex spectral representation~\cite{NHA15, NSK12, W11} of the effective constitutive parameters that may exhibit instabilities when the pair $(\bk,\omega)$ resides on a Bloch branch in that~$\mathcal{D}^e(\bk,\omega)=0$. To deal with the problem, the authors in \cite{NSK12} for instance derive the Willis' model by invoking the Fourier series representation (akin to the Floquet-Bloch approach) and a regularization scheme where the Green's function is partitioned into a regular part and a singular component that diverges on a Bloch branch. 

In the sequel, we first propose an alternative representation of the Willis' model, using the eigensystem for the unit cell, that both (a) remains stable off and on effective Bloch branches, and (b) elucidates the aforementioned degenerate case where~$\mathcal{D}(\bk,\omega)=0$ but~$\mathcal{D}^e(\bk,\omega)\neq 0$. 

%%%%%%%%%%%%%%%%%%%%%%%%%%%%%%%%%%%%%%%%%%%%%%%%%%
\section{Eigensystem representation of the Willis' model} \label{Representation}
To commence the analysis, we introduce the periodic function spaces
\begin{eqnarray*}
H^1_{p} (Y) &\! \!=\!\!& \{ g \in L^2(Y) \, : \, \nabla g \in (L^2(Y))^d, \,\,g|_{x_j=0} = g|_{x_j=\ell_j}, \,\, j=\overline{1,d},\},
\end{eqnarray*}
and the weighted Sobolev space $ L^2_\rho (Y) = \{ g: \int_Y \rho |g|^2 < \infty \}$. In this setting, let  $\tilde{w} \in H^1_p(Y)$ and $\tilde{\bv}=(\tilde{v}_1, \cdots, \tilde{v}_d) \in \big( H^1_p(Y) \big)^d$ denote the cell functions satisfying 
\begin{eqnarray} \label{CellPDEfw}
-\omega^2 \rho(\bx) \tilde{w}(\bx) - \nabla_{\!\bk}\!\cdot\!  \left( G(\bx) \nabla_{\!\bk} \tilde{w} \right) ~=~ 1 \quad \mbox{in} \quad Y,\\
-\omega^2 \rho(\bx) \tilde{\bv} (\bx) - \nabla_{\!\bk}\!\cdot\!  \Big( G(\bx) \big(\nabla_{\!\bk} \tilde{\bv}-  \boldsymbol{I} \big) \Big) ~=~ 0 \quad \mbox{in} \quad Y, \label{CellPDEfv}
\end{eqnarray}
subject to the boundary conditions
\begin{eqnarray} 
G \nabla_{\!\bk}\tilde{w}  \cdot \bnu|_{x_j=0} &\!\!=\!\!& -G\nabla_{\!\bk}\tilde{w} \cdot \bnu|_{x_j=\ell_j}, \qquad j=\overline{1,d}, \label{CellPDEfwBC} \\
\bnu \cdot G(\nabla_{\!\bk}\tilde{\bv}-\boldsymbol{I})|_{x_j=0} &\!\!=\!\!& -\bnu \cdot G(\nabla_{\!\bk}\tilde{\bv} - \boldsymbol{I})|_{x_j=\ell_j}, \qquad j=\overline{1,d}. \label{CellPDEfvBC}
\end{eqnarray}
Here~$\bI$ denotes the second-order identity tensor and, assuming hereon the Einstein summation notation, \mbox{$\nabla_{\!\bk}\hspace*{0.5pt} \bg \,=\, \be_j\otimes \partial\bg/\partial{x}_j +i\bk \otimes \bg\,$} for any vector or tensor field~$\bg$. We also remark that if one seeks a weak solution for $\tilde{u}$, $\tilde{w}$, or $\tilde{\bv}$ in a variational sense, the second of~(\ref{CellPDEBC1}), (\ref{CellPDEfwBC}), or~(\ref{CellPDEfvBC}) are implicitly included in such a variational formulation.

\begin{remark} 
Let $\mG(\bx;\by)$ denote the Green's function for the unit cell solving~\eqref{CellPDE}--\eqref{CellPDEBC1} with~$(\tilde{f},\tilde\bgamma)$ replaced by~$(\delta(\bx\!-\!\by),\bzero)$, and let~$\mH(\bx;\by)$ denote its dipole counterpart solving the same equations but with~$(\tilde{f},\tilde\bgamma)$ superseded by~$(\bzero,\bI\delta(\bx\!-\!\by))$. With such notation, the featured cell functions can be interpreted as the $Y$-averages $\tilde{w}=\langle\mG\rangle_{\by}$ and $\tilde{\bv}=\langle\mH\rangle_{\by}$, where the integration is performed over the source location~$\by\in Y$. 
\end{remark}

In what follows, the cell functions $\tilde{w}$ and $\tilde{\bv}$ are used as a ``basis'' for representing~$\tilde{u}$. Indeed, by the superposition argument one obtains the following lemma. 

\begin{lemma} \label{Baiswv}
Let~$\tilde{u}$ solve~\eqref{CellPDE}--\eqref{CellPDEBC1} where~$\tilde{f}$ and~$\tilde{\bgam}$ are constants. Then
\begin{eqnarray*}
\tilde{u} ~=~ \tilde{f}\hspace*{0.3pt} \tilde{w} \:+\: \tilde{\bgam} \!\cdot\! \tilde{\bv} \qquad \text{and} \qquad 
\langle\tilde{u}\rangle ~=~ \tilde{f}\hspace*{0.3pt} \langle\tilde{w}\rangle \:+\: \tilde{\bgam} \!\cdot\! \langle\tilde{\bv}\rangle.
\end{eqnarray*}
\end{lemma}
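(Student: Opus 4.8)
The plan is to prove the lemma by a pure superposition argument, treating the constant amplitudes $\tilde f$ and $\tilde\bgam$ as two independent ``loading channels'' whose individual responses are, by construction, precisely the cell functions $\tilde w$ and $\tilde\bv$. Concretely, I would introduce the candidate field
\[
\tilde u^\star \,:=\, \tilde f\,\tilde w + \tilde\bgam\cdot\tilde\bv,
\]
and show that it solves the very same boundary value problem \eqref{CellPDE}--\eqref{CellPDEBC1} as $\tilde u$; the two claimed identities then follow from uniqueness and linearity of the average, respectively.

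First I would verify the field equation. Since $\tilde f$ and $\tilde\bgam$ are constant, $\nabla_{\!\bk}$ acts only on the spatial factors, so that $\nabla_{\!\bk}(\tilde f\,\tilde w)=\tilde f\,\nabla_{\!\bk}\tilde w$ and $\nabla_{\!\bk}(\tilde\bgam\cdot\tilde\bv)=\tilde\bgam\cdot\nabla_{\!\bk}\tilde\bv$. Consequently the effective strain splits as $\nabla_{\!\bk}\tilde u^\star-\tilde\bgam = \tilde f\,\nabla_{\!\bk}\tilde w + \tilde\bgam\cdot(\nabla_{\!\bk}\tilde\bv-\bI)$, where I have used $\tilde\bgam\cdot\bI=\tilde\bgam$ -- which is exactly why the identity tensor appears in \eqref{CellPDEfv}. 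Applying the operator $-\omega^2\rho(\cdot)-\nabla_{\!\bk}\!\cdot\!(G\,\cdot)$ and collecting the contributions proportional to $\tilde f$ and to $\tilde\bgam$, the two groups reproduce \emph{verbatim} the left-hand sides of \eqref{CellPDEfw} and \eqref{CellPDEfv}. Invoking those two cell equations collapses the result to $\tilde f\cdot 1 + \tilde\bgam\cdot\bzero = \tilde f$, so $\tilde u^\star$ satisfies \eqref{CellPDE}.

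Next I would check the boundary conditions \eqref{CellPDEBC1}. Periodicity of the trace of $\tilde u^\star$ is immediate, as $\tilde w\in H^1_p(Y)$ and $\tilde\bv\in(H^1_p(Y))^d$ are $Y$-periodic and periodicity survives constant-coefficient linear combinations. For the flux condition the same splitting yields $G(\nabla_{\!\bk}\tilde u^\star-\tilde\bgam)=\tilde f\,G\nabla_{\!\bk}\tilde w+\tilde\bgam\cdot G(\nabla_{\!\bk}\tilde\bv-\bI)$, whose normal trace inherits the required anti-periodicity directly from \eqref{CellPDEfwBC} and \eqref{CellPDEfvBC} (the contraction of $\bnu$ against the two channels being routine index bookkeeping). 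Hence $\tilde u^\star$ solves the full problem \eqref{CellPDE}--\eqref{CellPDEBC1}.

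To conclude the pointwise identity I would invoke uniqueness: whenever $(\bk,\omega)$ lies off a dispersion branch -- i.e. $\mathcal{D}(\bk,\omega)\neq 0$, which is also the regime in which $\tilde w$ and $\tilde\bv$ are themselves well defined -- the problem \eqref{CellPDE}--\eqref{CellPDEBC1} admits a single solution, whence $\tilde u=\tilde u^\star$. The mean-field identity then follows by applying the linear averaging functional $\langle\cdot\rangle$ and pulling the constants $\tilde f,\tilde\bgam$ outside the average. I expect the only delicate point to be precisely this uniqueness step: on a dispersion branch the homogeneous problem admits nontrivial eigenmodes (the ``eigenmodes of zero mean'' flagged in the introduction), so the pointwise identity must be read modulo that kernel; nevertheless, if such an eigenmode has vanishing $Y$-average, the second (mean-field) identity persists regardless. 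All remaining manipulations reduce to bookkeeping of the constant factors $\tilde f$ and $\tilde\bgam$.
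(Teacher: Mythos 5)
Your proof is correct and follows exactly the paper's approach: the paper states the lemma with the one-line justification ``by the superposition argument,'' and your verification of the field equation, boundary conditions, and uniqueness (with the appropriate caveat about $\omega^2\neq\tilde\lambda_m$, matching the paper's Remark~\ref{rem5}) is precisely the fleshed-out version of that argument.
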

Next, we establish a representation of the Willis' effective model in terms of~$\tilde{w}$ and~$\tilde{\bv}$. To be precise, let
\begin{eqnarray}
\tilde{\boldsymbol{A}}(\bx)~=~  \frac{\langle \tilde{\bv} \rangle}{\langle \tilde{w} \rangle} \tilde{w}-\tilde{\bv}, \qquad 
\tilde{B}(\bx) ~=~ \frac{1}{i\omega} \Big(1 -\frac{1}{\langle \tilde{w} \rangle}\tilde{w}  \Big)+ \frac{1}{i
\omega}\Big(\frac{i\bk \cdot \langle \tilde{\bv} \rangle}{\langle \tilde{w} \rangle} \tilde{w}-i\bk \cdot \tilde{\bv} \Big). \label{Welldefinedness}
\end{eqnarray}
Accordingly one finds that  
\begin{eqnarray*}
\tilde{u} ~=~ \langle \tilde{u}\rangle \,+\, \tilde{\boldsymbol{A}} \!\cdot\! \langle \tilde{\beps}- \tilde{\bgam} \rangle \,+\, \tilde{B} \vel, 
\end{eqnarray*}
whereby
\begin{eqnarray*}
\langle \bsig \rangle &=& \langle G (\beps -\bgam) \rangle = \langle G\nabla_{\!\bk} \tilde{u} + G ( -\bgam)\rangle =  \langle G\nabla_{\!\bk} (\tilde{u}-\langle \tilde{u} \rangle) + G (i\bk \langle \tilde{u} \rangle -\bgam)\rangle \\
&=& \langle G \nabla_{\!\bk} \big( \tilde{\boldsymbol{A}} \!\cdot\! \langle  \tilde{\beps}- \tilde{\bgam} \rangle +  \tilde{B} \vel \big)  \rangle + \langle G \big( \langle \beps \rangle -\bgam \big) \rangle,
\end{eqnarray*}
and 
\begin{eqnarray*}
\langle \tilde{p} \rangle ~=\: -i\omega \langle \rho\tilde{u} \rangle ~=\:  -i\omega \big\langle \rho  \big( \langle \tilde{u}\rangle + \tilde{\boldsymbol{A}} \!\cdot\! \langle  \tilde{\beps}- \tilde{\bgam} \rangle +  \tilde{B} \vel \big) \big\rangle.
\end{eqnarray*}
Then the constitutive parameters in~\eqref{IntroEffectiveWillis} take the form 
\begin{eqnarray}
\tilde{\boldsymbol{C}}^e = \big\langle G\big(  \nabla_{\!\bk} \tilde{\boldsymbol{A}} + {\boldsymbol{I}} \big) \big\rangle, \label{EffectiveModuli} \quad
\tilde{\boldsymbol{S}}^{e\mbox{\tiny{1}}} =  \langle G \nabla_{\!\bk} \tilde{B}  \rangle, \quad
\tilde{\boldsymbol{S}}^{e\mbox{\tiny{2}}} =  \langle -i \omega \rho \tilde{\boldsymbol{A}}  \rangle, \quad
\tilde{\rho}^e =  \langle \rho -i \omega \rho \tilde{B}  \rangle. \label{EffectiveDensity}
\end{eqnarray}
From the expressions~(\ref{Welldefinedness}) for $\tilde{\boldsymbol{A}}(\bx)$ and $\tilde{B}(\bx)$, a direct calculation yields
\begin{eqnarray} 
\tilde{\rho}^e &=&  \tilde{Z}^e \langle \rho \tilde{w} \rangle \big(1- i\bk\!\cdot\!  \langle \tilde{\bv} \rangle   \big) + i\bk\!\cdot\!  \langle \rho \tilde{\bv} \rangle, \label{RelationsrhoRepre1} \\
\tilde{\boldsymbol{C}}^e &=&  \langle  G \rangle {\boldsymbol{I}} +  \tilde{Z}^e \langle G \nabla_{\!\bk} \tilde{w} \rangle \otimes \langle  \tilde{\bv} \rangle -\langle G \nabla_{\!\bk}   \tilde{\bv} \rangle , \\
\tilde{\boldsymbol{S}}^{e\mbox{\tiny{1}}} &=&  \frac{i\bk}{i\omega} \langle G \rangle - \frac{\tilde{Z}^e  }{i\omega}\langle G \nabla_{\!\bk}  \tilde{w} \rangle \big(1- i\bk\!\cdot\!  \langle \tilde{\bv} \rangle   \big) -\frac{1}{i\omega} \langle G \nabla_{\!\bk}  (i\bk \!\cdot\! \tilde{\bv}) \rangle , \\
\tilde{\boldsymbol{S}}^{e\mbox{\tiny{2}}} &=&  i \omega \big(  \langle \rho \tilde{\bv} \rangle-\tilde{Z}^e \langle \rho \tilde{w} \rangle \langle \tilde{\bv} \rangle\big), \label{RelationsSe2Repre1}
\end{eqnarray}
where~$\tilde{Z}^e$ is the so-called \emph{effective impedance} which recasts the mean-fields equation~\eqref{CellPDEEffectiveFieldsEquation} as $\tilde{Z}^e \langle\tilde{u}\rangle = \tilde{f}\,$ when $\bgamma=\bzero$; in particular, 
\begin{eqnarray} \label{EffectiveImpedanceEigen}
\tilde{Z}^e \langle\tilde{u}\rangle ~:=\: -i\omega\langle\tilde{p}\rangle - i{\bk}\!\cdot\!\langle\tilde{\bsig}\rangle|_{\bgamma=\bzero} 
\qquad \Longrightarrow\qquad  \tilde{Z}^e~=~ \frac{1}{\langle\tilde{w}\rangle}, 
\end{eqnarray}
noting that the second equality is a direct consequence of Lemma~\ref{Baiswv}.

\begin{remark}
Equations~(\ref{Welldefinedness}) hold when $\langle \tilde{w} \rangle \not = 0$. In fact when~$\tilde\bgamma=\bzero$, \eqref{CellPDEEffectiveFieldsEquation} and~\eqref{EffectiveImpedanceEigen} demonstrate that $\langle\tilde{u}\rangle=\langle \tilde{w}\rangle\tilde{f}$, whereby~$\langle \tilde{w} \rangle=0$ necessitates~$\langle \tilde{u} \rangle =0$. This is the case which the homogenization theory does not cover as examined in~\cite{NSK12}. From now on, to guarantee mathematical rigor and physical consistency we assume $\langle \tilde{w} \rangle \not = 0$.
\end{remark}

% \begin{remark}
% The effective constitutive relations are functions of $(\bk, \omega)$. For values of $(\bk, \omega)$ such that $\langle \tilde{\beps}  - \tilde\bgam \rangle$ and $\langle\tilde{v} \rangle$ are linearly independent, the effective constitutive relations at those values of $(\bk, \omega)$ are then uniquely defined. In particular if the eigenstrain $\tilde{\bgam}$ is set to be zero, then $\langle \tilde{\beps}  - \tilde\bgam \rangle$ and $\langle\tilde{v} \rangle$ are linearly dependent for all values of $(\bk, \omega)$, in this case the effective constitutive relations are not uniquely defined. The introduction of the eigenstrain $\tilde{\bgam}$ in general helps to guarantee the unique definition of effective constitutive relations.
% \end{remark}

\begin{remark}
Since $\tilde{w} \in H^1_p(Y)$ and $\tilde{\bv} \in (H^1_p(Y))^d$, from~(\ref{RelationsrhoRepre1})--(\ref{RelationsSe2Repre1}) one concludes that the Willis' effective constitutive parameters are bounded. Furthermore these quantities are uniquely defined if (\ref{CellPDEfw}) and~(\ref{CellPDEfv}) each have a unique solution. 
For completeness, situations where the uniqueness of $\tilde{w}$ and $\tilde{\bv}$ does not hold are discussed in Section~\ref{Representation}\ref{NonDegenerateCaseSubsection}. 
\end{remark}

%%%%%%%%%%%%%%%%%%%%%%%%%%%%%%%%%%%%%%%%%%%%
\subsection{Eigensystem for the unit cell of periodicity}\label{SectionEigen}
From the variational formulation one can show that $\left( - \nabla_{\!\bk} \!\cdot\! \left( G(\bx) \nabla_{\!\bk}  \right) \right)^{-1}$, as an operator from $L^2_\rho (Y)$ to itself with the range in $H^1_{p}(Y)$ subject to appropriate boundary conditions, is a compact self-adjoint operator \cite{PBL78}. Hence for each~$\bk$ there exists an eigensystem $\{ \tilde{\phi}_{m}, \tilde{\lambda}_{m}\}$ that satisfies
\begin{eqnarray} \label{Eigensystem}
- \nabla_{\!\bk} \!\cdot\! \big( G(\bx) \nabla_{\!\bk} \tilde{\phi}_{m} \big) &\!\!=\!\!& \tilde{\lambda}_{m}  \rho \, \tilde{\phi}_m \quad \mbox{in} \quad Y, \\
G \nabla_{\!\bk}\tilde{\phi}_{m}  \cdot \bnu|_{x_j=0} &\!\!=\!\!& -G\nabla_{\!\bk}\tilde{\phi}_m \cdot \bnu|_{x_j=\ell_j}, \quad j=\overline{1,d} \nonumber
\end{eqnarray}
where~$\tilde\lambda_m\in\mathbb{R}\,$, $\,\tilde{\phi}_m \in H^1_{p} (Y)$, and $\{ \tilde{\phi}_{m} \}$ are complete and orthonormal in $L^2_\rho (Y)$, i.e. 
\begin{eqnarray*}
\int_Y \rho(\bx) \tilde{\phi}_m(\bx) \overline{ \tilde{\phi}}_n(\bx)\, \textrm{d}\bx ~=~ \delta_{mn}.
\end{eqnarray*}
Since $\tilde{u}(\bx)$ satisfies~(\ref{CellPDE}), one obtains the variational formulation 
\begin{multline} \label{CellPDEEigenstrainVariational}
-\omega^2 \int_Y \rho(\bx) \tilde{u}(\bx) \overline{ \tilde{\phi}}_m(\bx) \,\textrm{d}\bx \;-\; \int_Y \tilde{u}(\bx)\overline{  \nabla_{\!\bk} \cdot \left( G(\bx) \nabla_{\!\bk}  \tilde{\phi}_m(\bx) \right) }\,\textrm{d}\bx  \\
=~\, \int_Y G(\bx) \overline{ \nabla_{\!\bk} \tilde{\phi}_m(\bx)} \,\cdot\, \tilde{\bgam}\, \textrm{d}\bx  \;+\;  
\int_Y \tilde{f} \, \overline{ \tilde{\phi}}_m(\bx) \,\textrm{d}\bx.  
\end{multline}
Thanks to the completeness of $\{ \tilde{\phi}_m \}$ in $L^2_\rho (Y)$, any $\tilde{u}\in L^2_\rho (Y)$ can be written as
\begin{eqnarray*}
\tilde{u}(\bx) ~=~ \sum_{m=1}^\infty \tilde{\alpha}_m \tilde{\phi}_m(\bx) \quad \mbox{in} \quad L^2_\rho (Y),
\end{eqnarray*}
where $\tilde{\alpha}_m$ are to be determined. By~\eqref{CellPDEEigenstrainVariational} and the orthogonality of $\{\tilde{\phi}_m \}$ in $L^2_\rho (Y)$, one further has 
\begin{eqnarray*}
-\omega^2 \tilde{\alpha}_m + \tilde{\lambda}_m \tilde{\alpha}_m ~=~ (\tilde{f}  ,  \tilde{\phi}_m) + (G  \tilde{\bgam}, \nabla_{\!\bk}  \tilde{\phi}_m),
\end{eqnarray*}
where $(\cdot , \cdot)$ denotes the usual $L^2(Y)$ inner product. This demonstrates that
\begin{eqnarray}\label{CellPDEukalpha}
\tilde{\alpha}_m ~=~ \frac{(\tilde{f},\tilde{\phi}_m)}{\tilde{\lambda}_m -\omega^2 } \,+\, \frac{ (G  \tilde{\bgam}, \nabla_{\!\bk}  \tilde{\phi}_m)}{\tilde{\lambda}_m -\omega^2 } \quad \mbox{if} \quad  \omega^2 \not= \tilde{\lambda}_m, \quad m\in\mathbb{Z}^{+},
\end{eqnarray}
where $\mathbb{Z}^{+}$ denotes the set of positive integers. On recalling that~$\tilde{f}$ and ~$\tilde{\bgam}$ are constants, one finds from~(\ref{CellPDEukalpha}) that the expressions
\begin{eqnarray}
\tilde{w} &=& \sum_{m=1}^\infty \frac{(1 ,  \tilde{\phi}_m)}{\tilde{\lambda}_m -\omega^2 } \tilde{\phi}_m, \quad \mbox{when} \quad \omega^2 \not= \lambda_m,  \quad m\in\mathbb{Z}^{+} \label{Eigenexpansionw} \\
\tilde{\bv} &=&\sum_{m=1}^\infty \frac{(G, \nabla_{\!\bk} \tilde{\phi}_m)}{\tilde{\lambda}_m -\omega^2 } \tilde{\phi}_m, \quad \mbox{when} \quad \omega^2 \not= \lambda_m,  \quad m\in\mathbb{Z}^{+} \label{Eigenexpansionv}
\end{eqnarray}
hold in the $L^2_\rho(Y)$ sense. 

\begin{remark}
From~\eqref{EffectiveImpedanceEigen} and~\eqref{Eigenexpansionw}, it is easy to see that~$\langle\tilde{w}\rangle$ and thus the effective impedance~$\tilde{Z}^e$ are real-valued. 
\end{remark}

\begin{remark}\label{rem5}
From the above arguments, one finds that the necessary and sufficient condition that~(\ref{CellPDEfw}) and~(\ref{CellPDEfv}) each have a unique solution is $\omega^2 \not= \lambda_m$, $\forall m$. If $\omega^2 = \lambda_n$ for some $n$, then~(\ref{CellPDEfw}) is still solvable provided $(1,\tilde{\phi}_{n})=0$ and~(\ref{CellPDEfv}) is still solvable provided $(G, \nabla_{\!\bk}\tilde{\phi}_{n})=\bzero$. These conditions are hereon referred as the solvability conditions.
\end{remark}

We next establish the representation of~$\tilde{w}$ and~$\tilde{\bv}$ assuming that the above solvability conditions hold for some $\tilde{\lambda}_n$ ($n$ fixed). For generality, let $\tilde{\lambda}_n$ be either a simple or repeated eigenvalue, and denote by  
\begin{equation}\label{multeig}
\{\tilde{\phi}_{j}\}_{j\in \Lambda_n}, \qquad \Lambda_n=\{j:n\!-\!M_n\leqslant j \leqslant n\!+\!N_n\}, \qquad M_n, N_n \geqslant 0 
\end{equation} 
the set of eigenfunctions corresponding to $\tilde{\lambda}_{n}$.  Further let $V_n$ be the closure of the space spanned by this basis, and let $V_n^\perp$ be the orthogonal complement to $V_n$ in the periodic $L^2(Y)$ space. Now we assume that $\langle \tilde{\phi}_{j} \rangle = 0$ and $(G, \nabla_{\!\bk} \tilde{\phi}_{j})=\bzero$ for all $j\in \Lambda_n$, i.e. that the solvability conditions for $\tilde{w}$ and $\tilde{\bv}$ hold. This yields the eigenfunction representation 
\begin{eqnarray}
\tilde{w} &\!\!=\!\!\!& \sum_{\Lambda_n\not\ni m=1}^{\infty}  \frac{(1,\tilde{\phi}_{m})}{\tilde{\lambda}_{m}-\omega^2} \tilde{\phi}_{m} \quad \mbox{when} \quad \omega^2 \neq \lambda_m, \quad m\in\mathbb{Z}^{+}\backslash\{n\}, \label{EigenexpansionwSolv} \\
\tilde{\bv} &\!\!=\!\!\!&\sum_{\Lambda_n\not\ni m=1}^{\infty} \frac{(G, \nabla_{\!\bk} \tilde{\phi}_{m})}{\tilde{\lambda}_{m} - \omega^2} \tilde{\phi}_{m} \quad \mbox{when} \quad \omega^2 \neq \lambda_m, \quad m\in\mathbb{Z}^{+}\backslash\{n\},\label{EigenexpansionvSolv}
\end{eqnarray}
which is, at $\omega^2=\tilde{\lambda}_n$, bounded and unique \emph{up to} a free-wave contribution in~$V_n$ whose basis solves~\eqref{Eigensystem} when $\tilde{\lambda}_m=\tilde{\lambda}_n$. Due to the fact that $\langle\tilde{\phi}_j\rangle=0$ for all~$j\in\Lambda_n$, however, the averages~$\langle\tilde{w}\rangle$ and~$\langle\tilde{\bv}\rangle$ are both bounded and unique at $\omega^2=\tilde{\lambda}_n$. More generally they are, for given~$\bk$, continuous functions of~$\omega$ over any closed interval containing~$\tilde{\lambda}_n^{1/2}$ but not (the square roots of) other eigenvalues.

%%%%%%%%%%%%%%%%%%%%%%%%%%%%%%%%%%%%%%%%%%%%%%
\subsection{Properties of the effective constitutive parameters}

In this section we shed light on the effective constitutive parameters~(\ref{RelationsrhoRepre1})--(\ref{RelationsSe2Repre1}), written in terms of~$\tilde{w}$ and~$\tilde{\bv}$, assuming that $\omega \not= \tilde{\lambda}_n$ for all $n$. To this end, we need the following two lemmas and we refer to Section~\ref{App} for their proofs. 
\begin{lemma} \label{ReciprocityLemma}
The Hermitian symmetry 
\begin{eqnarray*}
\langle G \nabla_{\!\bk} \tilde{\bv} \rangle ~=~ \langle G \nabla_{\!\bk} \tilde{\bv} \rangle^*
\end{eqnarray*} 
holds, where $(\cdot)^*$ denotes the conjugate transpose.
\end{lemma}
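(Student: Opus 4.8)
The plan is to prove the Hermitian symmetry entrywise: writing $\tilde{\bv}=(\tilde{v}_1,\dots,\tilde{v}_d)$, I will show that $\overline{\langle G\nabla_{\!\bk}\tilde{\bv}\rangle_{pi}}=\langle G\nabla_{\!\bk}\tilde{\bv}\rangle_{ip}$ for every $p,i=\overline{1,d}$, where $\langle G\nabla_{\!\bk}\tilde{\bv}\rangle_{pi}=\langle G\,(\nabla_{\!\bk}\tilde{v}_i)_p\rangle$; this is precisely the assertion $\langle G\nabla_{\!\bk}\tilde{\bv}\rangle=\langle G\nabla_{\!\bk}\tilde{\bv}\rangle^*$. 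The engine of the argument is a reciprocity identity obtained from the weak form of the cell problem~\eqref{CellPDEfv}--\eqref{CellPDEfvBC} together with the self-adjointness of $-\nabla_{\!\bk}\!\cdot\!(G\nabla_{\!\bk})$ already invoked in Section~\ref{SectionEigen}.

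First I would record the component-wise weak formulation. Projecting~\eqref{CellPDEfv} onto $\be_i$ shows that each $\tilde{v}_i\in H^1_p(Y)$ solves $-\omega^2\rho\tilde{v}_i-\nabla_{\!\bk}\!\cdot\!\big(G(\nabla_{\!\bk}\tilde{v}_i-\be_i)\big)=0$ subject to~\eqref{CellPDEfvBC}. Testing against $\overline{\psi}$ with $\psi\in H^1_p(Y)$ and integrating by parts over $Y$ — the boundary terms cancelling by the periodicity of $\psi$ and the anti-periodicity of the flux in~\eqref{CellPDEfvBC}, and noting that $\overline{\nabla_{\!\bk}\psi}=\nabla\overline{\psi}-i\bk\overline{\psi}$ — gives
\begin{equation*}
a(\tilde{v}_i,\psi)-\omega^2(\rho\tilde{v}_i,\psi)=(G\be_i,\nabla_{\!\bk}\psi),\qquad a(u,\psi):=\int_Y G\,\nabla_{\!\bk}u\cdot\overline{\nabla_{\!\bk}\psi}\,\textrm{d}\bx,
\end{equation*}
for all $\psi\in H^1_p(Y)$, where $(\cdot,\cdot)$ is the $L^2(Y)$ inner product. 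Since $G$ and $\rho$ are real, both $a(\cdot,\cdot)$ and $(\rho\,\cdot,\cdot)$ are Hermitian, i.e. $a(u,\psi)=\overline{a(\psi,u)}$ and $(\rho u,\psi)=\overline{(\rho\psi,u)}$ — this is exactly the symmetry that renders the operator of Section~\ref{SectionEigen} self-adjoint.

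The reciprocity step then chooses the two cell functions as test functions for each other. Taking $\psi=\tilde{v}_p$ in the weak form for $\tilde{v}_i$ and $\psi=\tilde{v}_i$ in the weak form for $\tilde{v}_p$ produces the two scalars $a(\tilde{v}_i,\tilde{v}_p)-\omega^2(\rho\tilde{v}_i,\tilde{v}_p)$ and $a(\tilde{v}_p,\tilde{v}_i)-\omega^2(\rho\tilde{v}_p,\tilde{v}_i)$, which are complex conjugates of one another by the Hermitian property above. On the right-hand sides, $(G\be_i,\nabla_{\!\bk}\tilde{v}_p)=\int_Y G\,\overline{(\nabla_{\!\bk}\tilde{v}_p)_i}\,\textrm{d}\bx=|Y|\,\overline{\langle G\nabla_{\!\bk}\tilde{\bv}\rangle_{ip}}$ by the reality of $G$, and symmetrically $(G\be_p,\nabla_{\!\bk}\tilde{v}_i)=|Y|\,\overline{\langle G\nabla_{\!\bk}\tilde{\bv}\rangle_{pi}}$, with $|Y|$ the cell volume. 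Equating the conjugate left-hand sides therefore yields $\overline{\langle G\nabla_{\!\bk}\tilde{\bv}\rangle_{pi}}=\langle G\nabla_{\!\bk}\tilde{\bv}\rangle_{ip}$, which is the claim.

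The only genuinely delicate part is the bookkeeping of complex conjugation: one must track the sign flip in $\overline{\nabla_{\!\bk}\psi}=\nabla\overline{\psi}-i\bk\overline{\psi}$ through the integration by parts and verify that the periodic/flux conditions in~\eqref{CellPDEfvBC} annihilate the boundary integrals; the remainder is formal. As a cross-check one may instead insert the eigenfunction expansion~\eqref{EigenexpansionvSolv}, which gives $\langle G\nabla_{\!\bk}\tilde{\bv}\rangle_{pi}=|Y|^{-1}\sum_m(\tilde{\lambda}_m-\omega^2)^{-1}\,b_{m,i}\,\overline{b_{m,p}}$ with $b_{m,i}:=(G\be_i,\nabla_{\!\bk}\tilde{\phi}_m)$ and \emph{real} denominators, whence the symmetry under $i\leftrightarrow p$ with conjugation is manifest; the weak-form route is preferable only because it sidesteps any convergence discussion for the series.
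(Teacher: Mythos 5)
Your proof is correct, and it takes a genuinely different route from the paper's. You argue by direct reciprocity between the components of $\tilde{\bv}$ themselves: writing the weak form of \eqref{CellPDEfv}--\eqref{CellPDEfvBC} for each component $\tilde{v}_i$, testing the equation for $\tilde{v}_i$ with $\tilde{v}_p$ and vice versa, and exploiting the Hermitian character of $a(\cdot,\cdot)$ and $(\rho\,\cdot,\cdot)$ (inherited from the realness of $G$ and $\rho$, i.e.\ precisely the self-adjointness invoked in Section~\ref{SectionEigen}) to conclude that the two right-hand sides $(G\be_i,\nabla_{\!\bk}\tilde{v}_p)$ and $(G\be_p,\nabla_{\!\bk}\tilde{v}_i)$ are conjugates of one another; your index bookkeeping converting this into $\overline{\langle G\nabla_{\!\bk}\tilde{\bv}\rangle_{pi}}=\langle G\nabla_{\!\bk}\tilde{\bv}\rangle_{ip}$ checks out. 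The paper instead routes the argument through Lemma~\ref{LemmaCellBasis}: it decomposes $\langle G\nabla_{\!\bk}\tilde{\bv}\rangle=\langle G\overline{\nabla_{\!\bk}\bzeta}\rangle\tran+\omega^2\langle\rho\overline{\bzeta}\otimes\tilde{\bv}\rangle$, where $\bzeta$ is the static ($\omega$-independent) cell function of \eqref{CellBasis}, and then proves each piece Hermitian separately --- the static part by integrating \eqref{CellBasis} against $\overline{\zeta}_j$, and the $\omega^2$ part by comparing the eigenfunction expansions of $\tilde{\bv}$ and $\bzeta$ (legitimate there, since only $L^2_\rho$ inner products of the functions, and not their gradients, are involved). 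Your route is shorter and more self-contained: it needs neither the auxiliary function $\bzeta$ nor the spectral series, so the term-by-term-differentiation caveat that the paper itself flags in Section~\ref{NonDegenerateCaseSubsection} for the expansions \eqref{Eigenexpansionw}--\eqref{Eigenexpansionv} never arises; your closing remark correctly identifies this as the reason to prefer the weak form over the expansion cross-check. What the paper's detour buys is reuse of machinery: Lemma~\ref{LemmaCellBasis} and $\bzeta$ are needed anyway to study the limiting behavior of the effective constitutive parameters as $\omega^2\to\tilde{\lambda}_n$, and the same ingredients feed directly into the proof of Lemma~\ref{RelationsLemma}, so the paper amortizes that setup rather than writing a standalone reciprocity argument as you do.
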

\begin{lemma} \label{RelationsLemma}
The following equations hold
\begin{eqnarray*}
i\bk \cdot \langle \tilde{\bv} \rangle &=& 1+\omega^2 \overline{\langle \rho \tilde{w} \rangle}, \\
 \langle G \nabla_{\!\bk} (\tilde{\bv} \cdot i\bk) \rangle &=& i\bk \langle G\rangle + \omega^2 \overline{\langle \rho \tilde{\bv} \rangle}, \\
\langle G \nabla_{\!\bk} \tilde{w} \rangle &=& \overline{\langle \tilde{\bv} \rangle}.
\end{eqnarray*}
\end{lemma}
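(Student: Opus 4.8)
All three identities rest on the weak forms of the cell problems~\eqref{CellPDEfw} and~\eqref{CellPDEfv} together with one averaging observation: for any field $\bsig$ satisfying the periodic flux conditions~\eqref{CellPDEfwBC}--\eqref{CellPDEfvBC} one has $\langle\nabla\!\cdot\!\bsig\rangle = |Y|^{-1}\!\int_{\partial Y}\bsig\!\cdot\!\bnu = 0$, so that $\langle\nabla_{\!\bk}\!\cdot\!\bsig\rangle = i\bk\!\cdot\!\langle\bsig\rangle$. The plan is to prove the third identity first, deduce the first from it, and obtain the second from Lemma~\ref{ReciprocityLemma}. Throughout, the recurring subtlety is conjugation, since $\tilde{w},\tilde{\bv}$ are complex while $G,\rho,\omega,\bk$ are real and $\overline{i\bk}=-i\bk$.

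For $\langle G\nabla_{\!\bk}\tilde{w}\rangle=\overline{\langle\tilde{\bv}\rangle}$ I would cross-test the two problems: pair the weak form of~\eqref{CellPDEfw} with $\tilde{v}_i$ and that of the $i$-th component of~\eqref{CellPDEfv} with $\tilde{w}$. Both pairings contain the same Hermitian stiffness form and mass term, which (as $G,\rho,\omega$ are real) are self-adjoint; subtracting the conjugate of the first relation from the second cancels them, leaving only the two source contributions --- the constant right-hand side of~\eqref{CellPDEfw}, which integrates to $\langle\tilde{v}_i\rangle$, against the identity term $-\bI$ in~\eqref{CellPDEfv}, which integrates to $\overline{\langle G(\nabla_{\!\bk}\tilde{w})_i\rangle}$. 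Equating the two gives the claim componentwise.

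For $i\bk\!\cdot\!\langle\tilde{\bv}\rangle = 1+\omega^2\overline{\langle\rho\tilde{w}\rangle}$ I would simply average~\eqref{CellPDEfw} over $Y$; the averaging observation converts the flux term and yields $-\omega^2\langle\rho\tilde{w}\rangle - i\bk\!\cdot\!\langle G\nabla_{\!\bk}\tilde{w}\rangle = 1$. Substituting the third identity $\langle G\nabla_{\!\bk}\tilde{w}\rangle=\overline{\langle\tilde{\bv}\rangle}$ and conjugating the whole relation (so that $-i\bk\!\cdot\!\overline{\langle\tilde{\bv}\rangle}$ becomes $i\bk\!\cdot\!\langle\tilde{\bv}\rangle$) delivers the result.

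The second identity is the delicate one. Averaging the $j$-th component of~\eqref{CellPDEfv} over $Y$ gives $ik_i\langle G(\nabla_{\!\bk}\tilde{v}_j)_i\rangle = ik_j\langle G\rangle - \omega^2\langle\rho\tilde{v}_j\rangle$, whereas the target quantity is $\langle G\nabla_{\!\bk}(\tilde{\bv}\!\cdot\! i\bk)\rangle_l = ik_j\langle G(\nabla_{\!\bk}\tilde{v}_j)_l\rangle$: in the former $i\bk$ is contracted against the gradient index, in the latter against the $\tilde{\bv}$ index. The hard part will be bridging these two contractions, and this is exactly what Lemma~\ref{ReciprocityLemma} supplies. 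Writing $T_{ij}=\langle G(\nabla_{\!\bk}\tilde{\bv})_{ij}\rangle$, its Hermitian symmetry reads $\overline{T_{jl}}=T_{lj}$; conjugating the averaged relation, relabelling the dummy indices, and applying this symmetry turns the gradient-index contraction into the $\tilde{\bv}$-index contraction and produces $\langle G\nabla_{\!\bk}(\tilde{\bv}\!\cdot\! i\bk)\rangle = i\bk\langle G\rangle + \omega^2\overline{\langle\rho\tilde{\bv}\rangle}$. Besides this index bookkeeping, the only points requiring care are the vanishing of the boundary terms in each integration by parts, which the periodic flux conditions guarantee.
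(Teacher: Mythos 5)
Your proposal is correct, and for two of the three identities it takes a genuinely different route from the paper. The paper proves all three claims through its eigensystem machinery: the first identity comes from the eigenfunction expansion of $\tilde{\bv}$ combined with the averaged eigensystem relation $-i\bk\cdot\langle G\nabla_{\!\bk}\tilde{\phi}_n\rangle=\tilde{\lambda}_n\langle\rho\tilde{\phi}_n\rangle$ and term-by-term series manipulation; the third comes from Lemma~\ref{LemmaCellBasis} (itself a duality argument against the static cell function $\bzeta$ of~\eqref{CellBasis}) followed by expansions of $\tilde{w}$ and $\bzeta$ in the eigenbasis. You bypass all of this: your cross-testing of the weak forms of~\eqref{CellPDEfw} and~\eqref{CellPDEfv} — exploiting the Hermitian symmetry of the sesquilinear form $a(u,\psi)=-\omega^2\int_Y\rho\hspace*{0.5pt} u\hspace*{0.5pt}\overline{\psi}+\int_Y G\nabla_{\!\bk}u\cdot\overline{\nabla_{\!\bk}\psi}$, valid because $G$, $\rho$, $\omega$ are real — yields the third identity in one stroke, and the first identity then drops out by averaging~\eqref{CellPDEfw}, substituting, and conjugating; I checked the conjugation bookkeeping ($\overline{-i\bk\cdot\overline{\langle\tilde{\bv}\rangle}}=i\bk\cdot\langle\tilde{\bv}\rangle$) and it is right. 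For the second identity your argument (average~\eqref{CellPDEfv}, conjugate, and use the Hermitian symmetry of Lemma~\ref{ReciprocityLemma} to trade the gradient-index contraction for the component-index contraction) is essentially identical to the paper's. What your route buys is economy and logical transparency: it needs neither the eigenfunction expansions (and their convergence caveats, which the paper must remark on) nor the auxiliary function $\bzeta$ and Lemma~\ref{LemmaCellBasis}, and it exhibits the first claim as a corollary of the third. What the paper's route buys is coherence with its larger program: the same spectral representations~\eqref{Eigenexpansionw}--\eqref{Eigenexpansionv} are reused throughout Section~\ref{Representation} (e.g. to study behavior as $\omega^2\to\tilde{\lambda}_n$ and to see that $\tilde{Z}^e$ is real), so proving the lemma in that language comes at little extra cost there.
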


The Willis' effective model can now be recast in terms of~$\tilde{w}$ and~$\tilde{\bv}$ as follows. 
\begin{Prop} \label{ThmRelationsEffectiveConstitutiveRelations}
The effective constitutive parameters  $\tilde{\boldsymbol{C}}^e$,  $\tilde{\rho}^e$,  $\tilde{\boldsymbol{S}}^{e\mbox{\tiny{1}}}$ and  $\tilde{\boldsymbol{S}}^{e\mbox{\tiny{2}}}$ carry the symmetries 
\begin{eqnarray*}
\tilde{\rho}^e = \tilde{\rho}^{e*}, \qquad \tilde{\boldsymbol{C}}^e\! = \tilde{\boldsymbol{C}}^{e*}, \qquad  
\tilde{\boldsymbol{S}}^{e\mbox{\tiny{1}}}\!=-\tilde{\boldsymbol{S}}^{e\mbox{\tiny{2}}*},
\end{eqnarray*}
and admit the eigensystem representation  
\begin{eqnarray} \label{Relationsrho}
\tilde{\rho}^e &=&  -\omega^2 \tilde{Z}^e |\langle \rho \tilde{w} \rangle|^2   + i\bk\cdot  \langle \rho \tilde{\bv} \rangle, \\
\tilde{\boldsymbol{C}}^e &=&  \langle  G \rangle {\boldsymbol{I}} +  \tilde{Z}^e \,\overline{\langle\tilde{\bv}\rangle} \otimes \langle  \tilde{\bv} \rangle -\langle G \nabla_{\!\bk} \tilde{\bv} \rangle , \label{RelationsC} \\
%&& ~+ \frac{i\bk}{|\bk|^2} \otimes  \frac{i\bk}{|\bk|^2} \left(  2\omega^2 \tilde{Z}^e  \Re  \tilde{v} \right), \label{RelationsC} \\
\tilde{\boldsymbol{S}}^{e\mbox{\tiny{2}}} &=&  i \omega \big(  \langle \rho \tilde{\bv} \rangle-\tilde{Z}^e \langle \rho \tilde{w} \rangle \langle \tilde{\bv} \rangle\big), \label{RelationsS}
\end{eqnarray}
where~$\tilde{w}$ and~$\tilde{\bv}$ are given by~\eqref{Eigenexpansionw}--\eqref{Eigenexpansionv}. 
\end{Prop}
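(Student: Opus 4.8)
The plan is to treat the proposition in two stages: first derive the compact eigensystem representations \eqref{Relationsrho}--\eqref{RelationsS} from the ``raw'' forms in \eqref{RelationsrhoRepre1}--\eqref{RelationsSe2Repre1}, and then extract the three symmetries from those compact forms using Lemma~\ref{ReciprocityLemma}, Lemma~\ref{RelationsLemma}, and the already-noted reality of~$\tilde{Z}^e$. Throughout I keep the standing assumption $\omega^2\neq\tilde{\lambda}_m$ for all~$m$, so that $\tilde{w}$ and~$\tilde{\bv}$ are well defined and the expansions \eqref{Eigenexpansionw}--\eqref{Eigenexpansionv} converge.

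For the representations I would substitute Lemma~\ref{RelationsLemma} term by term. Its first identity gives $1-i\bk\cdot\langle\tilde{\bv}\rangle=-\omega^2\overline{\langle\rho\tilde{w}\rangle}$, so the leading contribution to \eqref{RelationsrhoRepre1} collapses to $-\omega^2\tilde{Z}^e|\langle\rho\tilde{w}\rangle|^2$ and produces \eqref{Relationsrho}. Its third identity $\langle G\nabla_{\!\bk}\tilde{w}\rangle=\overline{\langle\tilde{\bv}\rangle}$ converts the dyad $\tilde{Z}^e\langle G\nabla_{\!\bk}\tilde{w}\rangle\otimes\langle\tilde{\bv}\rangle$ into $\tilde{Z}^e\,\overline{\langle\tilde{\bv}\rangle}\otimes\langle\tilde{\bv}\rangle$, giving \eqref{RelationsC}; and the expression \eqref{RelationsSe2Repre1} for $\tilde{\boldsymbol{S}}^{e\mbox{\tiny{2}}}$ already has the form \eqref{RelationsS}, so nothing is needed there.

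Two of the three symmetries then follow by inspection. For $\tilde{\boldsymbol{C}}^e=\tilde{\boldsymbol{C}}^{e*}$ I read off \eqref{RelationsC}: the term $\langle G\rangle\boldsymbol{I}$ is real, $\tilde{Z}^e$ is real, the dyad is Hermitian by the elementary identity $(\overline{\boldsymbol{a}}\otimes\boldsymbol{a})^*=\overline{\boldsymbol{a}}\otimes\boldsymbol{a}$ with $\boldsymbol{a}=\langle\tilde{\bv}\rangle$, and $\langle G\nabla_{\!\bk}\tilde{\bv}\rangle$ is Hermitian precisely by Lemma~\ref{ReciprocityLemma}; hence $\tilde{\boldsymbol{C}}^e$ is a combination of Hermitian matrices. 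For $\tilde{\boldsymbol{S}}^{e\mbox{\tiny{1}}}=-\tilde{\boldsymbol{S}}^{e\mbox{\tiny{2}}*}$ I would insert all three identities of Lemma~\ref{RelationsLemma} into the (unlabeled) $\tilde{\boldsymbol{S}}^{e\mbox{\tiny{1}}}$ relation among \eqref{RelationsrhoRepre1}--\eqref{RelationsSe2Repre1}; the two contributions proportional to $i\bk\langle G\rangle/(i\omega)$ cancel and the remainder equals $i\omega\big(\overline{\langle\rho\tilde{\bv}\rangle}-\tilde{Z}^e\overline{\langle\rho\tilde{w}\rangle}\,\overline{\langle\tilde{\bv}\rangle}\big)$, which is exactly $-\tilde{\boldsymbol{S}}^{e\mbox{\tiny{2}}*}$ read from \eqref{RelationsS} (again using $\tilde{Z}^e\in\mathbb{R}$).

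The one genuinely substantive point, and the step I expect to be the main obstacle, is the reality $\tilde{\rho}^e=\tilde{\rho}^{e*}$. By \eqref{Relationsrho} the term $-\omega^2\tilde{Z}^e|\langle\rho\tilde{w}\rangle|^2$ is manifestly real, so it suffices to show that the scalar $i\bk\cdot\langle\rho\tilde{\bv}\rangle$ is real. This does not drop out of Lemma~\ref{RelationsLemma} by algebra alone: contracting its second identity with~$i\bk$ leaves a spurious term that cannot be removed by integration by parts, since $G$ is only $L^\infty$. I would therefore argue spectrally. Inserting \eqref{Eigenexpansionv} gives $i\bk\cdot\langle\rho\tilde{\bv}\rangle=\sum_m(\tilde{\lambda}_m-\omega^2)^{-1}\big(i\bk\cdot(G,\nabla_{\!\bk}\tilde{\phi}_m)\big)\langle\rho\tilde{\phi}_m\rangle$; integrating the eigenrelation \eqref{Eigensystem} over~$Y$ and invoking the periodic boundary conditions yields $\tilde{\lambda}_m\langle\rho\tilde{\phi}_m\rangle=-\,i\bk\cdot\langle G\nabla_{\!\bk}\tilde{\phi}_m\rangle$, while $i\bk\cdot(G,\nabla_{\!\bk}\tilde{\phi}_m)=\overline{-\,i\bk\cdot\langle G\nabla_{\!\bk}\tilde{\phi}_m\rangle}$ up to a $Y$-volume factor (here $G$ is real). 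Substituting, each summand reduces to a nonnegative constant times $|\,i\bk\cdot\langle G\nabla_{\!\bk}\tilde{\phi}_m\rangle|^2\big/\big(\tilde{\lambda}_m(\tilde{\lambda}_m-\omega^2)\big)$, which is real because the eigenvalues $\tilde{\lambda}_m$ are real. The series is thus real term by term, in exactly the spirit of the previously noted reality of $\langle\tilde{w}\rangle$ and $\tilde{Z}^e$, and collecting the three pieces completes the proof.
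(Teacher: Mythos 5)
Your proposal is correct and follows essentially the same route as the paper's own proof: the compact representations are obtained by substituting Lemma~\ref{RelationsLemma} into \eqref{RelationsrhoRepre1}--\eqref{RelationsSe2Repre1}, the symmetries of $\tilde{\boldsymbol{C}}^e$ and the pair $\tilde{\boldsymbol{S}}^{e\mbox{\tiny{1}}}\!,\tilde{\boldsymbol{S}}^{e\mbox{\tiny{2}}}$ follow from Lemma~\ref{ReciprocityLemma} and the reality of $\tilde{Z}^e$, and the reality of $i\bk\cdot\langle\rho\tilde{\bv}\rangle$ is proved spectrally from \eqref{Eigenexpansionv} together with the $Y$-averaged eigenrelation $-i\bk\cdot\langle G\nabla_{\!\bk}\tilde{\phi}_m\rangle=\tilde{\lambda}_m\langle\rho\tilde{\phi}_m\rangle$. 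The only cosmetic difference is that the paper leaves each summand as $\tilde{\lambda}_m(1,\rho\tilde{\phi}_m)\langle\rho\tilde{\phi}_m\rangle/(\tilde{\lambda}_m-\omega^2)$, whereas you rewrite it as a multiple of $|i\bk\cdot\langle G\nabla_{\!\bk}\tilde{\phi}_m\rangle|^2\big/\big(\tilde{\lambda}_m(\tilde{\lambda}_m-\omega^2)\big)$, an equivalent form (provided one notes the harmless case $\tilde{\lambda}_m=0$, which forces $\bk=\bzero$ and a vanishing term).
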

\begin{proof}
Let us first recall~(\ref{EffectiveImpedanceEigen}), stating that $\langle \tilde{w} \rangle^{-1}=\tilde{Z}^e$, and representation~(\ref{RelationsrhoRepre1})--(\ref{RelationsSe2Repre1}) of the effective constitutive parameters. From Lemma~\ref{RelationsLemma}, we immediately have that equations (\ref{Relationsrho})--(\ref{RelationsS}) hold. It remains to show that
$\,\tilde{\rho}^e = \tilde{\rho}^{e*}\!$, $\,\tilde{\boldsymbol{C}}^e=  \tilde{\boldsymbol{C}}^{e*}\!$, and $\,\tilde{\boldsymbol{S}}^{e\mbox{\tiny{2}}*}=-\tilde{\boldsymbol{S}}^{e\mbox{\tiny{1}}}$.

Using the eigenfunction expansion~\eqref{Eigenexpansionv} of $\tilde{\bv}$ and the divergence theorem, one finds 
\begin{eqnarray*}
i\bk\cdot  \langle \rho \tilde{\bv} \rangle &\!\!=\!\!& \sum_{m=1}^\infty  \frac{ (i\bk, G \nabla_{\!\bk}\tilde{\phi}_m) \langle \rho \tilde{\phi}_m \rangle}{\tilde{\lambda}_m - \omega^2}  ~=~ \sum_{m=1}^\infty  \frac{ \tilde{\lambda}_m (1,\rho \tilde{\phi}_m) \langle \rho \tilde{\phi}_m \rangle}{\tilde{\lambda}_m - \omega^2} \,\in\, \R, 
\end{eqnarray*}
whereby~$\tilde{\rho}^e = \tilde{\rho}^{e*}$. From Lemma \ref{RelationsLemma}, on the other hand, it follows that 
\begin{eqnarray*}
\tilde{\boldsymbol{S}}^{e\mbox{\tiny{1}}} &\!\!\!=\!\!&  \frac{i\bk}{i\omega} \langle G \rangle - i\omega \tilde{Z}^e   \overline{ \langle   \tilde{\bv} \rangle}   \overline{ \langle \rho \tilde{w} \rangle}   -\frac{1}{i\omega} (i\bk \langle G\rangle + \omega^2 \overline{\langle \rho \tilde{\bv} \rangle} ) ~\;=\: 
- i\omega \tilde{Z}^e   \overline{ \langle   \tilde{\bv} \rangle}   \overline{ \langle \rho \tilde{w} \rangle}  + i\omega \overline{\langle \rho \tilde{\bv} \rangle},
\end{eqnarray*}
so that~$\tilde{\boldsymbol{S}}^{e\mbox{\tiny{1}}}=- \tilde{\boldsymbol{S}}^{e\mbox{\tiny{2}}*}$ since~$\tilde{Z}^e$ is real-valued. The remaining claim that $\tilde{\boldsymbol{C}}^e\!=\tilde{\boldsymbol{C}}^{e*}$ is a direct consequence of~\eqref{RelationsC} and Lemma~\ref{ReciprocityLemma}. 
\end{proof} \proofend

%%%%%%%%%%%%%%%%%%%%%%%%%
\subsection{Effective impedance and dispersion relationship}
In physical terms, the effective impedance~$\tilde{Z}^e$ synthesizes the linear operator acting on~$\langle\tilde{u}\rangle$  in the balance of linear momentum~(\ref{CellPDEEffectiveFieldsEquation}) when~$\bgamma=\bzero$. Its relationship with~$\tilde{\rho}^e$, $\tilde{\boldsymbol{C}}^e$ and $\tilde{\boldsymbol{S}}^{e\mbox{\tiny{2}}}$ is established via the following result, see also~\cite{NSK12,NHA16}.
\begin{lemma}
The effective impedance~\eqref{EffectiveImpedanceEigen} can be written in terms of the effective constitutive parameters as
\begin{eqnarray} \label{EffectiveImpedancebyEffectiveRelations}
\tilde{Z}^e(\bk,\omega) ~=~ -i \bk \cdot \tilde{\boldsymbol{C}}^e \!\!\cdot i \bk \;-\; i\bk\!\cdot\!(\tilde{\boldsymbol{S}}^{e\mbox{\tiny{2}}}\!\!+\tilde{\boldsymbol{S}}^{e\mbox{\tiny{2}}*}) i\omega \;-\; \omega^2 \tilde{\rho}^e.
\end{eqnarray}
\end{lemma}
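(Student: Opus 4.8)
The plan is to evaluate the defining relation \eqref{EffectiveImpedanceEigen} for $\tilde{Z}^e$ by inserting the Willis constitutive law \eqref{IntroEffectiveWillis}, specialized to $\bgamma=\bzero$, and then re-expressing the mean strain and the mean velocity solely in terms of the scalar $\langle\tilde{u}\rangle$. Since $\tilde\bgamma=\bzero$, the kinematic driver in \eqref{IntroEffectiveWillis} is $\langle\tilde\beps-\tilde\bgamma\rangle=\langle\nabla_{\!\bk}\tilde{u}\rangle$, whereas the mean velocity is $\vel=-i\omega\langle\tilde{u}\rangle$ by the third entry of \eqref{EffectiveRelations}. Substituting the resulting expressions for $\langle\tilde{\bsig}\rangle$ and $\langle\tilde{p}\rangle$ back into \eqref{EffectiveImpedanceEigen} and factoring out the nonzero scalar $\langle\tilde{u}\rangle$ will leave an algebraic identity among the four effective parameters, which I then rearrange into \eqref{EffectiveImpedancebyEffectiveRelations}.

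First I would establish the kinematic identity $\langle\nabla_{\!\bk}\tilde{u}\rangle=i\bk\,\langle\tilde{u}\rangle$. Writing $\nabla_{\!\bk}=\nabla+i\bk$, this reduces to $\langle\nabla\tilde{u}\rangle=\bzero$, which follows from the divergence theorem on $Y$ combined with the periodicity of $\tilde{u}$ in \eqref{CellPDEBC1}: the surface integrals over opposite faces of $\partial Y$ carry equal traces of $\tilde{u}$ but opposite outward normals $\bnu$, hence cancel. Consequently $\langle\tilde\beps\rangle=i\bk\,\langle\tilde{u}\rangle$ when $\bgamma=\bzero$, so both kinematic inputs to \eqref{IntroEffectiveWillis} are now proportional to $\langle\tilde{u}\rangle$.

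Next I would carry out the substitution and collect terms, keeping the prefactors $i\bk$ and $i\omega$ intact throughout to avoid sign errors. The two diagonal blocks of \eqref{IntroEffectiveWillis}, after the outer contractions with $-i\bk$ and $-i\omega$ from \eqref{EffectiveImpedanceEigen}, produce the terms $-i\bk\cdot\tilde{\boldsymbol{C}}^e\!\cdot i\bk$ and $-\omega^2\tilde{\rho}^e$, respectively. The two off-diagonal coupling blocks combine into $(i\omega)\,i\bk\cdot(\tilde{\boldsymbol{S}}^{e\mbox{\tiny{1}}}-\tilde{\boldsymbol{S}}^{e\mbox{\tiny{2}}})$; the only care needed here is the bookkeeping of the imaginary prefactors.

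The final and conceptually key step is to recover the conjugate-transpose structure of the target. Invoking the symmetry $\tilde{\boldsymbol{S}}^{e\mbox{\tiny{1}}}=-\tilde{\boldsymbol{S}}^{e\mbox{\tiny{2}}*}$ established in Proposition~\ref{ThmRelationsEffectiveConstitutiveRelations}, the coupling combination becomes $\tilde{\boldsymbol{S}}^{e\mbox{\tiny{1}}}-\tilde{\boldsymbol{S}}^{e\mbox{\tiny{2}}}=-(\tilde{\boldsymbol{S}}^{e\mbox{\tiny{2}}}+\tilde{\boldsymbol{S}}^{e\mbox{\tiny{2}}*})$, so the coupling contribution reads $-i\bk\cdot(\tilde{\boldsymbol{S}}^{e\mbox{\tiny{2}}}+\tilde{\boldsymbol{S}}^{e\mbox{\tiny{2}}*})\,i\omega$, precisely as in \eqref{EffectiveImpedancebyEffectiveRelations}. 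Dividing through by $\langle\tilde{u}\rangle\neq0$ then completes the proof. I expect the main, albeit modest, obstacle to be the sign and factor-of-$i$ accounting in the coupling terms, together with the need to supply the kinematic identity $\langle\nabla\tilde{u}\rangle=\bzero$ and to appeal to the $\tilde{\boldsymbol{S}}^{e\mbox{\tiny{1}}}/\tilde{\boldsymbol{S}}^{e\mbox{\tiny{2}}}$ symmetry; there is no analytic difficulty beyond this bookkeeping.
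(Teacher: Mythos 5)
Your proposal is correct and follows essentially the same route as the paper: substitute the Willis constitutive law~\eqref{IntroEffectiveWillis} (with $\bgamma=\bzero$) into the mean momentum balance defining $\tilde{Z}^e$, use the kinematic relations $\langle\tilde{\beps}\rangle=i\bk\langle\tilde{u}\rangle$ and $\vel=-i\omega\langle\tilde{u}\rangle$, and invoke the symmetry $\tilde{\boldsymbol{S}}^{e\mbox{\tiny{1}}}\!=-\tilde{\boldsymbol{S}}^{e\mbox{\tiny{2}}*}$ from Proposition~\ref{ThmRelationsEffectiveConstitutiveRelations}. The only difference is that you explicitly justify $\langle\nabla\tilde{u}\rangle=\bzero$ via periodicity and the divergence theorem, a step the paper leaves implicit in citing~\eqref{EffectiveRelations}.
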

\begin{proof}
Substituting the Willis' constitutive relationship~(\ref{IntroEffectiveWillis}) into the balance of linear momentum~\eqref{CellPDEEffectiveFieldsEquation} with~$\bgamma=\bzero$ yields 
\begin{eqnarray*}
-i\omega \big(\tilde{\boldsymbol{S}}^{e\mbox{\tiny{2}}} \!\!\cdot \langle \tilde{\beps}\rangle +\tilde{\rho}^e \vel \big) \,-\, 
i{\bk}\cdot \big(\tilde{\boldsymbol{C}}^e \!\!\cdot\!\langle\tilde{\beps}\rangle + \tilde{\boldsymbol{S}}^{e\mbox{\tiny{1}}} \vel \big) ~=~ \tilde{f}.
\end{eqnarray*}
From~(\ref{EffectiveRelations}), however, one has~$\langle \tilde{\beps} \rangle = i \bk \langle \tilde{u} \rangle$ and~$\vel = -i \omega \langle \tilde{u} \rangle$ which immediately recovers~\eqref{EffectiveImpedancebyEffectiveRelations} since~$\tilde{\boldsymbol{S}}^{e\mbox{\tiny{1}}}\!=-\tilde{\boldsymbol{S}}^{e\mbox{\tiny{2}}*}$ due to Proposition~\ref{ThmRelationsEffectiveConstitutiveRelations}.
\end{proof} 

To expose the dispersive characteristics of the homogenized system, one finds from~\eqref{CellPDEEffectiveFieldsEquation} and~\eqref{EffectiveImpedanceEigen} that the existence of free waves requires a non-trivial solution to $\tilde{Z}^e\langle\tilde{u}\rangle=0$, giving the \emph{effective} dispersion equation as $\mathcal{D}^e(\bk,\omega)\equiv\tilde{Z}^e(\bk,\omega) = 0$. On the other hand, eigensystem~\eqref{Eigensystem} of the original problem~\eqref{CellPDE} demonstrates that the \emph{exact} dispersion equation $\mathcal{D}(\bk,\omega)=0$ is solved by the Bloch pairs $\{\big(\bk, \tilde{\lambda}_n^{1/2}(\bk))\}_{n=1}^\infty$,  where $(\bk,\tilde{\lambda}_1^{1/2}(\bk))$ in particular specifies the so-called acoustic branch. As examined in~\cite{NSK12}, these two statements of the dispersion relationship are \emph{equivalent} barring the following situations: 
\vspace*{-3mm}
\begin{itemize}
\item the case where at least one Bloch wave mode~$\tilde{u}_B$ has zero mean, $\langle \tilde{u}_B\rangle=0$. In the context of~\eqref{Eigensystem}, this happens when $\omega=\tilde{\lambda}_n^{1/2}(\bk)$ so that $\langle \tilde{u}_B\rangle\equiv\langle\tilde{\phi}_n\rangle(\bk)=0$. Since such eigenmodes are not observable from the effective i.e. macroscopic point of view, this situation is not covered by the homogenization theory. \\
\vspace*{-2mm}
\item the instance of intersecting Bloch wave branches or double points, mathematically corresponding to the occurence of repeated eigenvalues in~\eqref{Eigensystem}.
\end{itemize}

\noindent To help better understand the second case, denote by~\eqref{multeig} the eigenfunctions corresponding to eigenvalue $\tilde{\lambda}_{n}$ with multiplicity $M_n\!+N_n+1$. When $\langle \tilde{\phi}_{j} \rangle = 0$ for all $j\in\Lambda_n$, \eqref{EigenexpansionwSolv} demonstrates that~$\langle \tilde{w} \rangle$ remains bounded when $\omega^2 = \tilde{\lambda}_n$, whereby the effective impedance fails to capture the Bloch pair $(\bk,\tilde{\lambda}_n^{1/2})$. On the other hand if there exists $j\in\Lambda_n$ so that $\langle \tilde{\phi}_{j} \rangle \not = 0$, one has 
\begin{eqnarray*}
\langle \tilde{w} \rangle ~=  \sum_{\Lambda_n\ni j, \langle \tilde{\phi}_{j} \rangle \not=0} \frac{(1,\tilde{\phi}_{j})}{\tilde{\lambda}_n -\omega^2 } \langle\tilde{\phi}_{j}\rangle \;+\; O(1) ~\to~ \infty \qquad \text{as}\quad \omega^2 \to \tilde{\lambda}_n,
\end{eqnarray*}
and consequently~$\tilde{Z}^e=\langle \tilde{w} \rangle^{-1} \to 0\,$ as  $\,\omega^2 \to \tilde{\lambda}_n$. Hence the effective impedance does capture the Bloch pair $(\bk, \tilde{\lambda}_n^{1/2})$ in such situations.

Summarizing the above arguments, we have the following theorem. For generality, we allow for $M_n=N_n=0$ in~\eqref{multeig} as to include both simple and repeated eigenvalues. 
\begin{theorem}
Assume that for given~$\bk$, $\lambda_n$ is an eigenvalue of~\eqref{Eigensystem} corresponding to eigenfunction(s)~\eqref{multeig}. Then the effective impedance~$\tilde{Z}^e$ given by~\eqref{EffectiveImpedancebyEffectiveRelations} is capable of capturing the dispersion pair $(\bk,\tilde{\lambda}_n^{1/2})$ if there exists $j\in\Lambda_n$ such that $\langle\tilde{\phi}_{j}\rangle \neq 0$. Further this dispersion pair is identifiable  by~$\tilde{Z}^e$ as a double point only if there are multiple eigenfunctions~$\tilde{\phi}_{j}$, $j\in\Lambda_n$ with non-zero mean.
\end{theorem}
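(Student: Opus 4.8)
The plan is to read both assertions off the meromorphic structure of $\langle\tilde{w}\rangle$ in the variable $s:=\omega^2$, exploiting that $\tilde{Z}^e=\langle\tilde{w}\rangle^{-1}$ by~\eqref{EffectiveImpedanceEigen}. From the expansion~\eqref{Eigenexpansionw} one has, with $|Y|$ the cell volume and $(1,\tilde{\phi}_m)=|Y|\overline{\langle\tilde{\phi}_m\rangle}$,
\[
\langle\tilde{w}\rangle(\bk,s)~=~\sum_{m}\frac{(1,\tilde{\phi}_m)\,\langle\tilde{\phi}_m\rangle}{\tilde{\lambda}_m-s}~=~|Y|\sum_{m}\frac{|\langle\tilde{\phi}_m\rangle|^2}{\tilde{\lambda}_m-s}.
\]
The decisive feature is that every residue $|\langle\tilde{\phi}_m\rangle|^2$ is non-negative, so terms sharing a common denominator cannot cancel. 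Collecting the indices $m\in\Lambda_n$ (those with $\tilde{\lambda}_m=\tilde{\lambda}_n$) and isolating the part analytic near $s=\tilde{\lambda}_n$ then shows that $\langle\tilde{w}\rangle$ has a genuine \emph{simple} pole at $s=\tilde{\lambda}_n$ with residue $-|Y|\sum_{j\in\Lambda_n}|\langle\tilde{\phi}_j\rangle|^2$. This residue is nonzero precisely when some $\langle\tilde{\phi}_j\rangle\neq0$, in which case $\langle\tilde{w}\rangle\to\infty$ and hence $\tilde{Z}^e\to0$ as $s\to\tilde{\lambda}_n$; this proves the sufficiency (the ``if'' part), reproducing the computation displayed just before the theorem.

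For the necessity concerning a double point I would first make precise that the effective dispersion branches, i.e. the zeros of $\tilde{Z}^e(\bk,\cdot)$, are exactly the poles of $\langle\tilde{w}\rangle$, namely the values $s=\tilde{\lambda}_m(\bk)$ with $\langle\tilde{\phi}_m(\bk)\rangle\neq0$. At $\bk=\bk_0$ all indices $m\in\Lambda_n$ collapse into the single simple pole above, so $\tilde{Z}^e(\bk_0,\cdot)$ exhibits merely a simple zero and, at fixed $\bk_0$, retains no trace of the multiplicity $|\Lambda_n|$. A double point can therefore be registered only \emph{dynamically}: as $\bk$ moves off $\bk_0$ the repeated eigenvalue $\tilde{\lambda}_n$ splits into branches $\tilde{\lambda}_m(\bk)$, $m\in\Lambda_n$, and each such branch contributes an effective dispersion curve through $(\bk_0,\tilde{\lambda}_n^{1/2})$ only if its eigenfunction retains non-zero mean. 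Hence two effective curves can cross there only if at least two of the $\tilde{\phi}_m$, $m\in\Lambda_n$, have non-zero mean; contrapositively, if at most one $\tilde{\phi}_j$ has non-zero mean then a single branch survives and the crossing is invisible to $\tilde{Z}^e$. Passing to the limit $\bk\to\bk_0$ by continuity of the eigenprojections and of the averaging functional identifies these surviving branches with members of $\{\tilde{\phi}_j\}_{j\in\Lambda_n}$ in~\eqref{multeig}, which yields the stated necessary condition.

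I expect the necessity part to be the main obstacle, and specifically the degenerate perturbation analysis at the repeated eigenvalue $\tilde{\lambda}_n$. The eigenfunctions in~\eqref{multeig} are not canonically defined, so one must work with the branch-adapted basis selected by the analytic continuation in $\bk$ (in the sense of Kato's perturbation theory), and then control the residues $|\langle\tilde{\phi}_m(\bk)\rangle|^2$ uniformly as $\bk\to\bk_0$ -- guaranteeing that a branch counted as captured for $\bk\neq\bk_0$ does not degenerate to zero mean in the limit. It is exactly this uniform control that makes ``multiple eigenfunctions with non-zero mean'' the correct invariant signature of an effective double point, and it is the step I would spend the most care on.
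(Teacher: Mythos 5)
Your ``if'' argument is essentially the paper's own proof: the paper likewise reads this claim off the eigenfunction expansion~\eqref{Eigenexpansionw}, showing that $\langle\tilde{w}\rangle\to\infty$ as $\omega^2\to\tilde{\lambda}_n$ whenever some $\langle\tilde{\phi}_j\rangle\neq0$, and hence $\tilde{Z}^e=\langle\tilde{w}\rangle^{-1}\to0$ by~\eqref{EffectiveImpedanceEigen}. Your added observation that $(1,\tilde{\phi}_j)\langle\tilde{\phi}_j\rangle=|Y|\,|\langle\tilde{\phi}_j\rangle|^2\geqslant0$, so that the singular terms cannot cancel one another, makes explicit a point the paper leaves implicit and is a small but genuine tightening.

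You part ways with the paper on the double-point claim. The paper runs no perturbation analysis in $\bk$ at all: its ``only if'' is a counting corollary of the visible/invisible dichotomy established just before the theorem. Since every term indexed by $\Lambda_n$ shares the single denominator $\tilde{\lambda}_n-\omega^2$, the pole of $\langle\tilde{w}\rangle$ at fixed $\bk$ is simple regardless of multiplicity; one non-zero-mean eigenfunction therefore produces exactly one visible branch through the point, and registering the point as a \emph{crossing of two effective branches} requires at least two eigenfunctions of non-zero mean. Your Kato-type branch-tracking scheme is a genuinely different and more ambitious route to the same statement, and the obstruction you flag is real: a branch visible for $\bk\neq\bk_0$ may have mean degenerating to zero in the limit, so ``captured nearby'' does not automatically pass to the crossing point. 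But note that the paper does not resolve this either -- it never goes beyond the counting argument -- so on this point your attempt is no weaker than the published one, though it is a program rather than a proof. One further caution on your final identification step: matching the branch-adapted (Kato) basis with the fixed basis~\eqref{multeig} is not automatic, because ``multiple eigenfunctions with non-zero mean'' is a basis-dependent statement (any eigenbasis of $V_n$ can be rotated so that a single member carries the entire mean). A rigorous version of your argument should be phrased through basis-invariant objects -- the residue $|Y|\sum_{j\in\Lambda_n}|\langle\tilde{\phi}_j\rangle|^2$ and the rank of the mean functional restricted to $V_n$ -- rather than through the particular eigenfunctions in~\eqref{multeig}.
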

In the sequel, we refer to the situation $\sum_{j\in\Lambda_n}|\langle\tilde{\phi}_{j}\rangle|>0\,$ (resp. $\sum_{j\in\Lambda_n}|\langle\tilde{\phi}_{j}\rangle|=0)$ as ``visible'' (resp. ``invisible'') case in the sense of detection of the dispersion pair $(\bk,\tilde{\lambda}_n^{1/2})$ by~$\tilde{Z}^e(\bk,\omega)=0$.

\subsection{Wavenumber-frequency behavior of the effective constitutive relations} \label{NonDegenerateCaseSubsection}

Willis' effective constitutive relations are typically derived using the Green's function for the unit cell~\cite{NHA15, NSK12, W11}, which requires a closer examination when (for given~$\bk$) $\,\omega^2\to\tilde{\lambda}_n(\bk)$. To this end, the authors in~\cite{NSK12} for example introduce a finite-dimensional (Fourier series) approximation of~\eqref{CellPDE}--\eqref{CellPDEBC1} and partition of the Green function into a regular and diverging part as $\omega\to\tilde{\lambda}^{1/2}_n$. In this section, we study the limiting behavior of the effective constitutive parameters when $\omega\to\tilde{\lambda}^{1/2}_n$ using the eigensystem~\eqref{Eigensystem} for the unit cell.

As can be seen from~\eqref{RelationsrhoRepre1}--\eqref{RelationsSe2Repre1}, the effective constitutive relations involve terms~$G\nabla_{\!\bk} \tilde{w}$ and $G\nabla_{\!\bk} \tilde{\bv}$. However the expressions~\eqref{Eigenexpansionw}--\eqref{Eigenexpansionv} for~$\tilde{w}$ and~$\tilde{\bv}$ hold in the $L^2(Y)$ sense, and their gradients may not be computable using term-by-term differentiation.  In order to obtain a rigorous eigenfunction expansion of~$G\nabla_{\!\bk} \tilde{w}$ and~$G\nabla_{\!\bk} \tilde{\bv}$, we need the following lemma and we refer to Appendix, Section~\ref{App} for its proof.

\begin{lemma} \label{LemmaCellBasis}
Assume that $\bzeta \in \big( H^1_p(Y) \big)^d$ satisfies the unit cell problem 
\begin{eqnarray} \label{CellBasis}
\nabla_{\!\bk} \!\cdot\! \big(G(\bx)(\nabla_{\!\bk}\bzeta -\boldsymbol{I}) \big)&\!\!=\!\!&0 \quad \mbox{in} \quad Y, \\
\bnu \cdot G (\nabla_{\!\bk} \bzeta -\boldsymbol{I} )|_{x_j = 0} &\!\!=\!\!& -\bnu \cdot G (\nabla_{\!\bk} \bzeta -\boldsymbol{I} )|_{x_j = \ell_j},  \quad j=\overline{1,d}. \nonumber
\end{eqnarray}
Then
\begin{eqnarray*}
\langle G \nabla_{\!\bk} \tilde{w} \rangle &\!\!=\!\!& \langle \overline{\bzeta} \rangle + \omega^2 \langle \rho \tilde{w} \overline{\bzeta} \rangle,  \\
\langle G \nabla_{\!\bk} \tilde{\bv} \rangle &\!\!=\!\!& \langle G \overline{\nabla_{\!\bk} \bzeta} \rangle\tran + \omega^2 \langle \rho  \overline{\bzeta} \otimes \tilde{\bv} \rangle,
\end{eqnarray*}
where $(\cdot)\tran$ denotes tensor or vector transpose.
\end{lemma}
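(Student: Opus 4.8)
The plan is to derive both identities from two Green's (integration-by-parts) relations that share a common sesquilinear form $B(\phi,\psi)=\int_Y G\,\nabla_{\!\bk}\phi\cdot\overline{\nabla_{\!\bk}\psi}$. The first ingredient is the periodic Green's identity: for $\psi\in H^1_p(Y)$ and a flux $\boldsymbol{F}$ whose normal trace is anti-periodic, i.e. $\boldsymbol{F}\cdot\bnu|_{x_j=0}=-\boldsymbol{F}\cdot\bnu|_{x_j=\ell_j}$, one has $\int_Y(\nabla_{\!\bk}\!\cdot\!\boldsymbol{F})\,\overline{\psi}=-\int_Y\boldsymbol{F}\cdot\overline{\nabla_{\!\bk}\psi}$, the boundary contributions cancelling because $\psi$ is periodic while the normal flux changes sign across opposite faces. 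I would apply this with $\boldsymbol{F}=G(\nabla_{\!\bk}\bzeta-\boldsymbol{I})$, $\boldsymbol{F}=G\nabla_{\!\bk}\tilde{w}$, and column-wise with $\boldsymbol{F}=G(\nabla_{\!\bk}\tilde{\bv}-\boldsymbol{I})$, whose normal traces are anti-periodic precisely by the boundary condition for $\bzeta$ in~\eqref{CellBasis} and by~\eqref{CellPDEfwBC}, \eqref{CellPDEfvBC}; since $\tilde{w},\tilde{\bv},\bzeta\in H^1_p(Y)$, they are all admissible test functions.

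For the first identity I would work component-wise, fixing $l$ and writing $\zeta_l=\bzeta\cdot\be_l$. Testing the corrector equation $\nabla_{\!\bk}\!\cdot\!(G(\nabla_{\!\bk}\zeta_l-\be_l))=0$ against $\psi$ gives $B(\zeta_l,\psi)=\int_Y G\,\overline{(\nabla_{\!\bk}\psi)_l}$; setting $\psi=\tilde{w}$ and conjugating the resulting scalar relation (using that $G$ is real-valued) turns $\overline{(\nabla_{\!\bk}\tilde{w})_l}$ into $(\nabla_{\!\bk}\tilde{w})_l$ and yields $\int_Y G(\nabla_{\!\bk}\tilde{w})_l=\int_Y G\,\overline{\nabla_{\!\bk}\zeta_l}\cdot\nabla_{\!\bk}\tilde{w}$. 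Independently, testing the governing equation~\eqref{CellPDEfw} for $\tilde{w}$ against $\psi=\zeta_l$ gives $-\omega^2\int_Y\rho\tilde{w}\,\overline{\zeta_l}+\int_Y G\nabla_{\!\bk}\tilde{w}\cdot\overline{\nabla_{\!\bk}\zeta_l}=\int_Y\overline{\zeta_l}$, whose middle term is exactly the right-hand side obtained just above. Eliminating this common bilinear term gives $\langle G(\nabla_{\!\bk}\tilde{w})_l\rangle=\langle\overline{\zeta_l}\rangle+\omega^2\langle\rho\tilde{w}\,\overline{\zeta_l}\rangle$, which, assembled over $l$, is the first claimed identity.

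The second identity follows by the same two-step mechanism applied to each column $\tilde{v}_l=\tilde{\bv}\cdot\be_l$, now tracking both indices. Testing the corrector equation for $\zeta_j$ against $\tilde{v}_l$ and conjugating yields $\int_Y G(\nabla_{\!\bk}\tilde{v}_l)_j=\int_Y G\,\overline{\nabla_{\!\bk}\zeta_j}\cdot\nabla_{\!\bk}\tilde{v}_l$, while testing~\eqref{CellPDEfv} for $\tilde{v}_l$ against $\zeta_j$ gives $\int_Y G\nabla_{\!\bk}\tilde{v}_l\cdot\overline{\nabla_{\!\bk}\zeta_j}=\omega^2\int_Y\rho\tilde{v}_l\,\overline{\zeta_j}+\int_Y G\,\overline{(\nabla_{\!\bk}\zeta_j)_l}$; here the source term $\int_Y G\be_l\cdot\overline{\nabla_{\!\bk}\zeta_j}$ plays the role that the constant forcing did in the $\tilde{w}$ case. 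Combining the two relations and identifying $\overline{(\nabla_{\!\bk}\zeta_j)_l}=\overline{(\nabla_{\!\bk}\bzeta)_{lj}}$ as the $(j,l)$ entry of $\langle G\,\overline{\nabla_{\!\bk}\bzeta}\rangle\tran$, and $\langle\rho\,\overline{\zeta_j}\,\tilde{v}_l\rangle$ as the $(j,l)$ entry of $\langle\rho\,\overline{\bzeta}\otimes\tilde{\bv}\rangle$, reassembles the tensor statement.

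The integrations by parts are routine; the one place I expect to be delicate is the bookkeeping of complex conjugates and of the index/transpose structure. Specifically, it is conjugating the \emph{corrector} identity (rather than the $\tilde{w}$ or $\tilde{\bv}$ identity) that converts $\overline{\nabla_{\!\bk}(\cdot)}$ into $\nabla_{\!\bk}(\cdot)$, and it is the placement of the free index $l$ versus $j$ that lands the transpose on $\langle G\,\overline{\nabla_{\!\bk}\bzeta}\rangle$ in the second identity. I would verify the sign conventions against~\eqref{CellPDEfw}--\eqref{CellPDEfvBC} and~\eqref{CellBasis} to confirm that the anti-periodic normal fluxes make every boundary term vanish.
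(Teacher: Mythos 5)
Your proof is correct and follows essentially the same route as the paper: both test the cell problems \eqref{CellPDEfw}, \eqref{CellPDEfv} against $\bzeta$ and the corrector problem \eqref{CellBasis} against $\tilde{w}$, $\tilde{\bv}$, eliminate the common bilinear term $\int_Y G\,\nabla_{\!\bk}(\cdot)\cdot\overline{\nabla_{\!\bk}\bzeta}$, and average over $Y$. Your conjugation and index bookkeeping (in particular, landing the transpose on $\langle G\,\overline{\nabla_{\!\bk}\bzeta}\rangle$) is accurate, and is in fact stated more carefully than in the paper's own proof.
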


Lemma~\ref{LemmaCellBasis} computes the averages of~$G\nabla_{\!\bk} \tilde{w}$ and~$G\nabla_{\!\bk} \tilde{\bv}$ in terms of $\tilde{w}$, $\tilde{\bv}$ and ${\bzeta}$. However since~$\bzeta$ is independent of~$\omega$ and thus~$\tilde{\lambda}_n$, the study of $\langle G \nabla_{\!\bk} \tilde{w} \rangle$ and $\langle G \nabla_{\!\bk} \tilde{\bv} \rangle\,$ as $\,\omega\to\tilde{\lambda}^{1/2}_n$ is reduced to that of~$\tilde{w}$ and~$\tilde{\bv}$.

%%%%%%
\subsubsection{Invisible case}\label{EffectiveCase2}

It was shown earlier that when~$\langle \tilde{\phi}_{j} \rangle =0$ for all~$j\in\Lambda_n$, $\langle\tilde{w}\rangle$ is a continuous function of~$\omega$ over any sufficiently small neighborhood of~$\tilde{\lambda}^{1/2}_n$ thanks to~\eqref{EigenexpansionwSolv}. If further $(G,\nabla_{\!\bk}\tilde{\phi}_{j})=\bzero$ for all~$j\in\Lambda_n$, then~\eqref{EigenexpansionvSolv} applies and~$\langle\rho\tilde{\phi}_j\rangle=0$ due to~\eqref{Eigensystem}, whereby~$\langle \tilde{\bv} \rangle$, $\langle \rho \tilde{w} \rangle$ and~$\langle \rho \tilde{\bv} \rangle$ are also continuous functions of~$\omega$ near~$\tilde{\lambda}_n^{1/2}$. Thanks to Lemma~\ref{LemmaCellBasis}, the same claim applies to~$\langle G \nabla_{\!\bk} \tilde{w} \rangle$ and~$\langle G \nabla_{\!\bk} \tilde{\bv} \rangle$, whereby $\tilde{\boldsymbol{C}}^e\!$,  $\tilde{\rho}^e$,  $\tilde{\boldsymbol{S}}^{e\mbox{\tiny{1}}}$ and  $\tilde{\boldsymbol{S}}^{e\mbox{\tiny{2}}}$ in~\eqref{RelationsrhoRepre1}--\eqref{RelationsSe2Repre1} are continuous functions of~$\omega$ over any closed interval containing~$\tilde{\lambda}_n^{1/2}$ but not (the square roots of) other eigenvalues. This situation is related to the so-called degenerate case discussed in~\cite{NSK12}. Here we finally remark that: (i) when $G(\bx)=\text{const.}$, $\langle \tilde{\phi}_{j} \rangle =0$ guarantees that $(G,\nabla_{\!\bk}\tilde{\phi}_{j})=\bzero$, and (ii) if $(G, \nabla_{\!\bk} \tilde{\phi}_{j})\not=\bzero$ for some~$j\in\Lambda_n$, the effective constitutive parameters may not be uniquely defined when $\lambda_n = \omega^2$. This can be seen, for instance, in the case where~$\tilde{\lambda}_n$ has multiplicity one.
%%%%%%

\subsubsection{Visible case} \label{EffectiveCase1}

In situations where~$\langle \tilde{\phi}_{j} \rangle \not=0$ for some~$j\in\Lambda_n$, from the eigenfunction expansions (\ref{Eigenexpansionw})--(\ref{Eigenexpansionv}) of $\tilde{w}$ and $\tilde{\bv}$, one finds assuming~$\,\omega^2\!-\tilde{\lambda}_n =o(1)$ that 
\begin{eqnarray*}
\tilde{w} ~=\!\!  \sum_{\Lambda_n\ni j, \langle \tilde{\phi}_{j} \rangle \not=0} \frac{(1 ,  \tilde{\phi}_{j})}{\tilde{\lambda}_{n} -\omega^2 } \tilde{\phi}_{j} \;+\; O(1), 
\qquad \mbox{and} \qquad
\tilde{\bv} ~=\!\! \sum_{\Lambda_n\ni j, \langle G, \nabla_{\!\bk} \tilde{\phi}_{j} \rangle \not=\bzero} \frac{(G,  \nabla_{\!\bk} \tilde{\phi}_{j})}{\tilde{\lambda}_{n} -\omega^2 } \tilde{\phi}_{j} \;+\; O(1).
\end{eqnarray*}

Next we pursue a detailed analysis when the eigenvalue $\tilde{\lambda}_n$ has multiplicity one. Here the corresponding eigenfunction is~$\tilde{\phi}_{n}$, and it is further assumed that $\langle G, \nabla_{\!\bk} \tilde{\phi}_{n} \rangle \not=\bzero$. We first note from the above expression that 
$\langle \tilde{w} \rangle \to \infty$ and $\langle \tilde{\bv} \rangle \to \infty\,$ as $\,\omega^2 \to \tilde{\lambda}_n$. To prove that~$\tilde{\boldsymbol{C}}^e\!$,  $\tilde{\rho}^e$,  $\tilde{\boldsymbol{S}}^{e\mbox{\tiny{1}}}$ and  $\tilde{\boldsymbol{S}}^{e\mbox{\tiny{2}}}$ remain well-defined in this case, it is sufficient to show that the germane singularities cancel. For brevity, we focus on the analysis of~$\tilde{\rho}^e$. From~\eqref{RelationsrhoRepre1}, one has 
\begin{eqnarray*}
\tilde{\rho}^e ~=~ \frac{\langle\rho\tilde{w}\rangle}{\langle\tilde{w}\rangle} \big(1- i\bk\!\cdot\!  \langle \tilde{\bv} \rangle   \big) + i\bk\!\cdot\!  \langle \rho \tilde{\bv} \rangle,  
\end{eqnarray*}
while~(\ref{Eigenexpansionw}) and~(\ref{Eigenexpansionv}) demonstrate that  
\begin{eqnarray*}
\tilde{w} ~=~  \frac{(1,\tilde{\phi}_{n})}{\tilde{\lambda}_{n} -\omega^2 } \tilde{\phi}_{n} \;+\; O(1) \qquad \mbox{and} \qquad
\tilde{\bv} ~=~ \frac{(G,\nabla_{\!\bk} \tilde{\phi}_{n})}{\tilde{\lambda}_{n} -\omega^2} \tilde{\phi}_{n} \;+\; O(1)
\end{eqnarray*}
when  $\omega^2\!- \tilde{\lambda}_n =o(1)$ and $\Lambda_n=\{n\}$. A direct calculation then shows that 
\[
\lim_{\omega\to\tilde{\lambda}_n^{1/2}}\tilde{\rho}^e(\bk,\omega) ~=~ O(1) \quad \text{for fixed}~\bk.
\]
A similar calculation, aided by~Lemma~\ref{LemmaCellBasis}, can be performed to show that $\,\tilde{\boldsymbol{S}}^{e\mbox{\tiny{1}}}\!$, $\tilde{\boldsymbol{S}}^{e\mbox{\tiny{2}}}$ and $\tilde{\boldsymbol{C}}^e$ likewise remain bounded when $\omega\to\tilde{\lambda}_n^{1/2}$. When~$\langle G,\nabla_{\!\bk}\tilde{\phi}_{n}\rangle=\bzero$, on the other hand, $\tilde{\bv}$ does not allow for a unique representation, implying that the Willis' effective constitutive parameters in~\eqref{RelationsrhoRepre1}--\eqref{RelationsSe2Repre1} are possibly non-unique in this case. Finally we remark that if~$\tilde{\lambda}_n$ has multiplicity larger than $1$, following a similar argument, one can investigate the more complicated behavior of~$\tilde{\rho}^e$, $\tilde{\boldsymbol{C}}^e\!$, $\tilde{\boldsymbol{S}}^{e\mbox{\tiny{1}}}$ and  $\tilde{\boldsymbol{S}}^{e\mbox{\tiny{2}}}$ as $\omega^2 \to \tilde{\lambda}_n$, see also the discussion of the so-called exceptional case in \cite{NSK12}.

%%%%%%%%%%%%%%%%%%%%%%%%%%
\section{LW-LF approximation of the Willis' model and comparison with the two-scale homogenization result} \label{ComparisonWillisTwoscale}

\subsection{Effective impedance obtained by two-scale homogenization}
In principle, the two-scale homogenization approach~\cite{PBL78} can be used to approximate the acoustic branch, $\omega=\tilde{\lambda}_1^{1/2}(\bk)$, of the dispersion relationship at long wavelengths where $\|\bk\|\ll |Y|^{-1/d}$. Recently, such an asymptotic approach was pursued up to the second order in~\cite{WG15} to describe~$\langle u\rangle$, where~$u$ satisfies the scalar wave equation~\eqref{PDE}. On taking~$|Y|=1$ for convenience and describing the featured long-wavelength, low-frequency (LW-LF) regime via scalings 
\begin{equation}\label{LW-LF}
\bk=\epsilon\hspace*{0.7pt} \hat{\bk}, \quad {\omega}= \epsilon\hspace*{0.7pt}  \hat{\omega}, \qquad \epsilon=o(1),
\end{equation}
the second-order approximation of the impedance function stemming from the results in~\cite{WG15} can be written as 
\begin{eqnarray} 
\tilde{\mathcal{Z}}^{e}_2(\bk,\omega)  \:~=~\:  
\epsilon^2 \big({\bmu}^{\mbox{\tiny{(0)}}}\!: (i\hat{\bk})^{2} + \rho_0\hspace*{1pt}  \hat{\omega}^2\big) \:+\: 
\epsilon^4 \big({\bmu}^{\mbox{\tiny{(2)}}}\!: (i\hat{\bk})^{4} + {\brho}^{\mbox{\tiny{(2)}}}\!:(i\hat{\bk})^{2}\hat{\omega}^2\big), 
\label{IntroDisComparisonZ2ts}
\end{eqnarray}
where 
\[
(i\hat{\bk})^{n} ~=~ i\hat{\bk}\otimes i\hat{\bk}\ldots\otimes i\hat{\bk} \qquad \text{$n$ times}; 
\]
``:'' denotes $n$-tuple contraction between two $n$th-order tensors producing a scalar; $\rho_0$ is a constant; $\bmu^{\mbox{\tiny{(0)}}}$ and $\brho^{\mbox{\tiny{(0)}}}$ are constant second-order tensors, and $\bmu^{\mbox{\tiny{(2)}}}$ is a constant fourth-order tensor. Later we shall specify these coefficients of homogenization.

\subsection{The main result}

In contrast to~\eqref{IntroDisComparisonZ2ts} whose roots $\tilde{\mathcal{Z}}^{e}_2(\bk,\omega)=0$ \emph{approximate} the acoustic branch in the LW-LF regime, the Willis' effective impedance~$\tilde{Z}^e$ given by~\eqref{EffectiveImpedancebyEffectiveRelations} is capable of capturing the dispersion relationship \emph{exactly} within the~$(\bk,\omega)$ region amenable to homogenization. In this setting one is tempted to obtain a second-order approximation, $\tilde{Z}^e_2$, of~\eqref{EffectiveImpedancebyEffectiveRelations} assuming long wavelengths and low frequencies as in~\eqref{LW-LF}, thus posing a natural question: what is the relationship between $\tilde{\mathcal{Z}}^{e}_2$ and $\tilde{Z}^e_2$? This issue was touched upon in~\cite{NHA16}, inferring the equivalency between the two approximations. In this work, we show for the first time that the two approximations \emph{differ} by a polynomial-type factor, namely 
\begin{eqnarray} \label{IntroDisComparisonWtsZ2}
\tilde{Z}^{e}_2 ~\overset{\epsilon^4}{=}~ \tilde{M}^{-1}_2\tilde{\mathcal{Z}}^{e}_2,
\end{eqnarray}
where $\tilde{M}_2$ is a polynomial in $\bk$ and~$\omega$, while ``$\overset{\epsilon^4}{=}$'' implies equality up to, and including, the $O(\epsilon^4)$ term. As it turns out, equations~$\tilde{\mathcal{Z}}^{e}_2(\bk,\omega)=0$ and~$\tilde{Z}^{e}_2(\bk,\omega)=0$ do provide equivalent approximations of the acoustic branch, $\omega=\tilde{\lambda}_1^{1/2}(\bk)$. However for pairs~$(\bk,\omega)$ \emph{off the acoustic branch}, $\tilde{\mathcal{Z}}^{e}_2$ and $\tilde{Z}^e_2$ differ due to the fact that the two-scale homogenization approach~\cite{PBL78} normally assumes~$f=0$ in~\eqref{PDE}. By reworking the latter analysis with~$f\neq 0$, we show that~$\tilde{M}_2$ arises naturally in the two-scale asymptotic analysis as a \emph{modulation} of the source term, and we establish the corresponding treatment of the dipole source~$\bgamma\neq\bzero$.

\subsection{Asymptotic expansion of the Willis' effective impedance}
In what follows, we establish a formal LW-LF analysis of~$\tilde{Z}^e$. To this end, we consider the asymptotics of $\tilde{w}$ as governed by~\eqref{CellPDEfw} and~\eqref{CellPDEfwBC} since $\langle\tilde{w}\rangle^{-1}=\tilde{Z}^e$. On imposing the LW-LF regime according to~\eqref{LW-LF}, we have 
\begin{eqnarray} 
&&-\epsilon^2 \hat{\omega}^2 \rho (\bx) \tilde{w} -\big( \nabla \!+\! \epsilon\hh i \hat{\bk}\big) \!\cdot\! \big ( G(\bx) ( \nabla \!+\! \epsilon\hh i \hat{\bk})  \tilde{w} \big) ~=~ 1 \quad \mbox{in} \quad Y, \label{ComparisonAnsatzEqn} \\ 
&& \begin{array}{rcl} \tilde{w}|_{x_j=0} &\!\!=\!\!& \tilde{w}|_{x_j=\ell_j}, \\*[1.2mm]
G  (\nabla \!+\! \epsilon i \hat{\bk}) \tilde{w}  \cdot \bnu|_{x_j=0} &\!\!=\!\!& -G (\nabla \!+\! \epsilon i \hat{\bk}) \tilde{w} \cdot \bnu|_{x_j=\ell_j},
\end{array} \quad j=\overline{1,d}.\label{ComparisonAnsatzEqnBCN}\label{ComparisonAnsatzEqnBCD}
\end{eqnarray}
Consider next the asymptotic expansion 
\begin{eqnarray}\label{wexp}
\tilde{w} (\bx) ~=~ \epsilon^{-2} \tilde{w}_0(\bx) \;+\; \epsilon^{-1} \tilde{w}_1(\bx) \;+\; \tilde{w}_2(\bx) \;+\; \epsilon\hh \tilde{w}_3(\bx) \;+\; \epsilon^2 \tilde{w}_4(\bx) \;+\; \cdots,
\end{eqnarray}
by which~(\ref{ComparisonAnsatzEqn})--(\ref{ComparisonAnsatzEqnBCN}) become a series in $\epsilon$. In what follows, the differential equations satisfied by $\tilde{w}_m$ in $Y$ ($m\geqslant 0$) are subject to implicit periodic boundary conditions
\begin{eqnarray*}
\begin{array}{rcl} \tilde{w}_m|_{x_j=0} &\!\!=\!\!& \tilde{w}_m|_{x_j=\ell_j}, \\*[1.2mm]
 G  (\nabla \tilde{w}_{m}+  i \hat{\bk}\hh \tilde{w}_{m-1})  \cdot \bnu|_{x_j=0} &\!\!=\!\!& - G  (\nabla \tilde{w}_{m}+  i \hat{\bk}\hh \tilde{w}_{m-1})  \cdot \bnu|_{x_j=\ell_j}, 
\end{array} \quad j=\overline{1,d}, 
\end{eqnarray*}
where $\tilde{w}_{-1}\equiv 0$. We will conveniently denote by~$\ww_m$ the respective \emph{constants of integration} when solving for~$\tilde{w}_m(\bx)$, $m\geqslant 0$. 

\subsubsection{Leading-order approximation}
The $O(\epsilon^{-2})$ contribution stemming from~\eqref{ComparisonAnsatzEqn} and~\eqref{wexp} reads 
\begin{eqnarray*}
&& -\nabla \!\cdot\! \big(G(\bx) \nabla \tilde{w}_0 \big) ~=~ 0 \quad \mbox{in} \quad Y. 
\end{eqnarray*}
As shown in~\cite{PBL78}, this type of differential equation admits (up to an additive constant) a unique periodic solution, whereby~$\tilde{w}_0(\bx) = \ww_0$. The $O(\epsilon^{-1})$ equation is
\begin{eqnarray*}
&&-\nabla \!\cdot\! \big( G(\bx) \nabla \tilde{w}_1\big) - \nabla \!\cdot\! \big( G(\bx)  i \hat{\bk}\hh \tilde{w}_0 \big)-  i \hat{\bk}\hh \!\cdot\! \big( G(\bx) \nabla \tilde{w}_0  \big) ~=~ 0 \quad \mbox{in} \quad Y,
\end{eqnarray*}
which is solved by~$\tilde{w}_1(\bx) = \bchi^{\mbox{\tiny{(1)}}} (\bx) \cdot i \hat{\bk}\hh w_0 + \ww_1$, where $\bchi^{\mbox{\tiny{(1)}}}\in (H_p^1(Y))^d$ is a \emph{zero-mean} vector satisfying 
\begin{eqnarray} \label{ComparisonEqnchi1}
&&\nabla \cdot \big( G (\nabla \bchi^{\mbox{\tiny{(1)}}} \!+\boldsymbol{I}) \big) ~=~0 \quad \mbox{in} \quad Y, \\
&&  \bnu \cdot G  (\nabla  \bchi^{\mbox{\tiny{(1)}}}\!+\boldsymbol{I} )|_{x_j=0} ~=\: - \bnu \cdot G  (\nabla  \bchi^{\mbox{\tiny{(1)}}}\!+\boldsymbol{I} )|_{x_j=\ell_j} , \quad j=\overline{1,d}. \nonumber
\end{eqnarray}
The $O(1)$ equation reads
\begin{multline}\label{w2}
-\nabla \!\cdot\! \big( G(\bx) \nabla \tilde{w}_2 \big)- \nabla \!\cdot\! \big( G(\bx)  i \hat{\bk}\hh \tilde{w}_1\big)  -  i \hat{\bk} \!\cdot\! \big( G(\bx) \nabla \tilde{w}_1\big)\\
-  i \hat{\bk} \!\cdot\! \big( G(\bx)  i \hat{\bk}\hh \tilde{w}_0 \big) 
- \hat{\omega}^2 \rho(\bx)\hh \tilde{w}_0 ~=~ 1 \quad \mbox{in} \quad Y.  
\end{multline}
Averaging~\eqref{w2} over $Y$ demonstrates that 
\begin{eqnarray} \label{ComparisonSecondOrderHomogenizationZerothOrderPDE}
({\bmu}^{\mbox{\tiny{(0)}}} \!: (i\hat{\bk})^2  + \rho_0 \hat{\omega}^2)\hh  w_0 ~=\: -1,
\end{eqnarray}
where 
\begin{eqnarray} \label{SecondOrderHomogenizationZerothOrderCoefficients}
\rho_0=\langle \rho \rangle, \qquad  {\bmu}^{\mbox{\tiny{(0)}}} = \{ \langle G( \nabla  \bchi^{\mbox{\tiny{(1)}}}  + \boldsymbol{I}  ) \rangle \}.
\end{eqnarray}
Note that in~\eqref{SecondOrderHomogenizationZerothOrderCoefficients} and hereafter, $\{\boldsymbol{\cdot}\}$ denotes tensor averaging over all index permutations; in particular for an $n$th-order tensor~$\boldsymbol{\tau}$, one has 
\begin{equation}\label{symtot}
\{\boldsymbol{\tau}\}_{j_1,j_2,\ldots j_n} ~=~ \frac{1}{n!}\sum_{(l_1,l_2,\ldots l_n)\in P} \boldsymbol{\tau}_{l_1,l_2,\ldots l_n}, \qquad j_1,j_2,\ldots j_n \in\overline{1,d}
\end{equation}
where~$P$ denotes the set of all permutations of~$(j_1,j_2,\ldots j_n)$. Such averaged expression for~${\bmu}^{\mbox{\tiny{(0)}}}$ is due to the structure of~${\bmu}^{\mbox{\tiny{(0)}}} \!: (i\hat{\bk})^2$, which is invariant with respect to the index permutation of~${\bmu}^{\mbox{\tiny{(0)}}}$. For brevity, we will also make use of the partial symmetrization 
\begin{equation}\label{sympart}
\{\boldsymbol{\tau}\}'_{j_1,j_2,\ldots j_n} ~=~ \frac{1}{(n\!-\!1)!}\sum_{(l_2,\ldots l_n)\in Q} \boldsymbol{\tau}_{j_1,l_2,\ldots l_n}, \qquad j_1,j_2,\ldots j_n \in\overline{1,d}
\end{equation}
where~$Q$ denotes the set of all permutations of~$(j_2,j_3,\ldots j_n)$.

\begin{remark}
To ensure that~(\ref{ComparisonSecondOrderHomogenizationZerothOrderPDE}) has a solution, we assume  
\begin{eqnarray}\label{hypo1}
 {\bmu}^{\mbox{\tiny{(0)}}} \!: (i\hat{\bk})^2  + \rho_0 \hat{\omega}^2 ~\neq~ 0.
\end{eqnarray}
\end{remark}
%%%
\subsubsection{First-order corrector}
Let $\bchi^{\mbox{\tiny{(2)}}} \in \big(H_p^1(Y)\big)^{d \times d} $ be the unique zero-mean, second-order tensor satisfying 
\begin{eqnarray} \label{Comparisonchi2}
&&\nabla \cdot \big( G\big( \nabla \bchi^{\mbox{\tiny{(2)}}} + \{\boldsymbol{I} \otimes \bchi^{\mbox{\tiny{(1)}}}\}'  \big) \big)  + G(\{\nabla  \bchi^{\mbox{\tiny{(1)}}}\} + \boldsymbol{I}) - \frac{\rho}{\rho_0} \bmu^{\mbox{\tiny{(0)}}}  ~=~0 \quad \mbox{in} \quad Y, \\
&&  \bnu \cdot G\big( \nabla \bchi^{\mbox{\tiny{(2)}}} + \{\boldsymbol{I} \otimes \bchi^{\mbox{\tiny{(1)}}}\}'  \big)|_{x_j=0} ~=\; - \bnu \cdot G\big( \nabla \bchi^{\mbox{\tiny{(2)}}} + \{\boldsymbol{I} \otimes \bchi^{\mbox{\tiny{(1)}}}\}' \big)|_{x_j=\ell_j} , \quad j=\overline{1,d},  \nonumber
\end{eqnarray}
and let $\eta^{\mbox{\tiny{(0)}}} \in H_p^1(Y)$ be the unique zero-mean solution of 
\begin{eqnarray} \label{Comparisoneta0}
&&\nabla \cdot \big(G \nabla \eta^{\mbox{\tiny{(0)}}}\big) ~=~ \frac{\rho-\rho_0}{\rho_0} \quad \mbox{in} \quad Y, \\
&& \bnu \cdot G \nabla  \eta^{\mbox{\tiny{(0)}}}|_{x_j=0} ~=\: - \bnu \cdot G \nabla  \eta^{\mbox{\tiny{(0)}}}|_{x_j=\ell_j} , \quad j=\overline{1,d}. \nonumber
\end{eqnarray}
With such definitions, one can show that~\eqref{w2} is solved by $\,\tilde{w}_2 (\bx) \,=\, \bchi^{\mbox{\tiny{(2)}}} (\bx) : (i \hat{\bk})^{2}\hh  w_0 \,+\, \bchi^{\mbox{\tiny{(1)}}} (\bx) \cdot i \hat{\bk}\hh w_1 \,+\, \eta^{\mbox{\tiny{(0)}}}(\bx) \,+\, w_2$.

Proceeding further with the asymptotic analysis, the $O(\epsilon)$ equation is found as 
\begin{multline}\label{w3}
-\nabla \!\cdot\! \big( G(\bx) \nabla \tilde{w}_3\big)- \nabla \!\cdot\! \big( G(\bx) i \hat{\bk}\hh \tilde{w}_2\big) -  i \hat{\bk} \!\cdot\! \big( G(\bx) \nabla \tilde{w}_2\big) \\
-  i \hat{\bk} \!\cdot\! \big( G(\bx)  i \hat{\bk}\hh \tilde{w}_1\big) - \hat{\omega}^2 \rho(\bx)\hh \tilde{w}_1 ~=~0 \quad \mbox{in} \quad Y.
\end{multline}
Averaging~\eqref{w3} over $Y$ gives the equation for constant~$w_1$ as 
\begin{eqnarray} \label{ComparisonSecondOrderHomogenizationFirstOrderPDE}
\big( \bmu^{\mbox{\tiny{(1)}}}\!: (i \hat{\bk})^{3}  + {\brho}^{\mbox{\tiny{(1)}}} \!\cdot\! i\hat{\bk} ~\hat{\omega}^2 \big)w_0 \,+\, \big( {\bmu}^{\mbox{\tiny{(0)}}}\!: (i \hat{\bk})^{2}+ \rho_0 \hat{\omega}^2 \big) w_1 ~=\: -\langle G\nabla \eta^{\mbox{\tiny{(0)}}}\rangle \!\cdot\! i\hat{\bk},
\end{eqnarray}
where 
\begin{eqnarray} \label{SecondOrderHomogenizationFirstOrderCoefficients}
 {\brho}^{\mbox{\tiny{(1)}}}  = \langle \rho \bchi^{\mbox{\tiny{(1)}}}  \rangle, \qquad   
\bmu^{\mbox{\tiny{(1)}}} \;=\; \big\{\big\langle G( \nabla  {\bchi}^{\mbox{\tiny{(2)}}} + \boldsymbol{I} \!\otimes\! {\bchi}^{\mbox{\tiny{(1)}}} ) \big\rangle \big\}.
\end{eqnarray}

%%%
\begin{lemma} \label{ComparisonSecondOrderHomogenizationeta0chi1}
The following identity holds
\begin{eqnarray*}
\langle G \nabla \eta^{\mbox{\tiny{(0)}}} \rangle ~=~ \frac{\langle\rho \bchi^{\mbox{\tiny{(1)}}}\rangle}{\rho_0}, 
\end{eqnarray*}
see Appendix, Section \ref{App} for the proof.
\end{lemma}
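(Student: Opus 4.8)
The plan is to prove the identity by a standard reciprocity (Green's identity) argument that uses the two cell problems defining $\eta^{\mbox{\tiny{(0)}}}$ in~\eqref{Comparisoneta0} and $\bchi^{\mbox{\tiny{(1)}}}$ in~\eqref{ComparisonEqnchi1} as mutual test functions. The guiding observation is that both problems are governed by the same self-adjoint operator $-\nabla\!\cdot\!(G\nabla\,\cdot\,)$, so that the \emph{symmetric} bilinear form $B(u,v):=\int_Y G\,\nabla u\!\cdot\!\nabla v$ can be evaluated in two ways: once with $u=\eta^{\mbox{\tiny{(0)}}}$ and $v=\chi^{\mbox{\tiny{(1)}}}_k$, exploiting the data of the $\eta^{\mbox{\tiny{(0)}}}$ equation, and once with the roles reversed, exploiting the data of the $\bchi^{\mbox{\tiny{(1)}}}$ equation. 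Equating the two resulting expressions then yields the claim componentwise.

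First I would work with the $k$th component, writing~\eqref{ComparisonEqnchi1} as $\partial_j\big(G(\partial_j\chi^{\mbox{\tiny{(1)}}}_k+\delta_{jk})\big)=0$. Multiplying~\eqref{Comparisoneta0} by $\chi^{\mbox{\tiny{(1)}}}_k$, integrating over $Y$, and integrating by parts gives $-B(\eta^{\mbox{\tiny{(0)}}},\chi^{\mbox{\tiny{(1)}}}_k)=\rho_0^{-1}\int_Y \chi^{\mbox{\tiny{(1)}}}_k(\rho-\rho_0)$, where the boundary contribution $\int_{\partial Y} \chi^{\mbox{\tiny{(1)}}}_k\,G\,\partial_j\eta^{\mbox{\tiny{(0)}}}\,\nu_j$ vanishes because $\chi^{\mbox{\tiny{(1)}}}_k$ is $Y$-periodic while the flux $G\nabla\eta^{\mbox{\tiny{(0)}}}\!\cdot\!\bnu$ takes opposite values on opposite faces, by the boundary condition in~\eqref{Comparisoneta0}. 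Next I would multiply the $k$th component of~\eqref{ComparisonEqnchi1} by $\eta^{\mbox{\tiny{(0)}}}$ and integrate by parts; the analogous boundary terms cancel thanks to the periodicity of $\eta^{\mbox{\tiny{(0)}}}$ and the anti-periodic flux prescribed in~\eqref{ComparisonEqnchi1}, and the $\delta_{jk}$ term arising from $\bI$ leaves $B(\chi^{\mbox{\tiny{(1)}}}_k,\eta^{\mbox{\tiny{(0)}}})=-\int_Y G\,\partial_k\eta^{\mbox{\tiny{(0)}}}$.

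Finally I would invoke the symmetry $B(\eta^{\mbox{\tiny{(0)}}},\chi^{\mbox{\tiny{(1)}}}_k)=B(\chi^{\mbox{\tiny{(1)}}}_k,\eta^{\mbox{\tiny{(0)}}})$ to equate the two right-hand sides, obtaining $\int_Y G\,\partial_k\eta^{\mbox{\tiny{(0)}}}=\rho_0^{-1}\int_Y \chi^{\mbox{\tiny{(1)}}}_k(\rho-\rho_0)$. Since $\bchi^{\mbox{\tiny{(1)}}}$ has zero mean and $|Y|=1$, the term $\rho_0\int_Y\chi^{\mbox{\tiny{(1)}}}_k$ drops out, and recalling $\langle\cdot\rangle=\int_Y(\cdot)$ collapses the relation to $\langle G\,\partial_k\eta^{\mbox{\tiny{(0)}}}\rangle=\rho_0^{-1}\langle\rho\,\chi^{\mbox{\tiny{(1)}}}_k\rangle$, which is precisely the asserted identity in component form. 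I expect the only delicate point to be the careful justification that the two boundary integrals vanish, namely correctly pairing each $Y$-periodic field against the anti-periodic normal flux on opposite faces of $\partial Y$, together with keeping the tensor index bookkeeping consistent; the remaining algebra is routine.
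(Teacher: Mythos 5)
Your proposal is correct and follows essentially the same route as the paper's proof: both test the $\eta^{\mbox{\tiny{(0)}}}$ equation~\eqref{Comparisoneta0} against $\bchi^{\mbox{\tiny{(1)}}}$ and the $\bchi^{\mbox{\tiny{(1)}}}$ equation~\eqref{ComparisonEqnchi1} against $\eta^{\mbox{\tiny{(0)}}}$, integrate by parts with the periodic/anti-periodic boundary terms cancelling, and then invoke $\langle\bchi^{\mbox{\tiny{(1)}}}\rangle=\bzero$ to drop the $\rho_0$ term. The only cosmetic difference is that you phrase the argument componentwise via an explicitly named symmetric bilinear form $B$, whereas the paper writes the same two identities in vector form and combines them directly.
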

\begin{remark} \label{mu1rho1mu0}
On the basis of~(\ref{ComparisonSecondOrderHomogenizationZerothOrderPDE}) and Lemma \ref{ComparisonSecondOrderHomogenizationeta0chi1}, (\ref{ComparisonSecondOrderHomogenizationFirstOrderPDE}) can be recast as
\begin{eqnarray} \label{ComparisonSecondOrderHomogenizationFirstOrderPDEForm1}
\big( \bmu^{\mbox{\tiny{(1)}}} -\frac{1}{\rho_0} \{{\brho}^{\mbox{\tiny{(1)}}}\! \otimes \bmu^{\mbox{\tiny{(0)}}} \} \big) \!: (i \hat{\bk})^{3} w_0 \,+\, \big( {\bmu}^{\mbox{\tiny{(0)}}} \!: (i \hat{\bk})^{2}+ \rho_0 \hat{\omega}^2 \big) w_1 ~=~ 0.
\end{eqnarray}
From the $Y$-average of~\eqref{wexp} and the fact that~$\langle \tilde{w} \rangle$ is real-valued, we have that $w_m=\langle\tilde{w}_m\rangle$, $m\geqslant 0$ are real-valued as well. From~\eqref{ComparisonSecondOrderHomogenizationFirstOrderPDEForm1} and hypothesis~\eqref{hypo1}, on the other hand, $w_1$ must be purely imaginary. This demonstrates that
\begin{eqnarray}\label{w1zero}
w_1=0 \qquad \text{and} \qquad \bmu^{\mbox{\tiny{(1)}}}   -\frac{1}{\rho_0} \{ {\brho}^{\mbox{\tiny{(1)}}} \otimes \bmu^{\mbox{\tiny{(0)}}} \} =0. 
\end{eqnarray}
Here it is noted that: (i) the latter identity can alternatively be established using~(\ref{Comparisonchi2}) and integration by parts, and (ii)~the result $w_1=0$ recovers the previous finding~\cite{WG15, CGM16} that the $O(\epsilon)$ bulk correction of a solution to the the time-harmonic wave equation in periodic media vanishes identically in the mean. 
\end{remark}

\subsubsection{Second-order corrector}
Let $\bchi^{\mbox{\tiny{(3)}}} \in \big( H_p^1(Y)\big)^{d\times d \times d}$ be the unique zero-mean, third-order tensor solving 
\begin{eqnarray} \label{Comparisonchi3}
\hspace*{-7mm} &&\nabla \cdot \big(G\big(\nabla \bchi^{\mbox{\tiny{(3)}}}  + \{\boldsymbol{I} \otimes \bchi^{\mbox{\tiny{(2)}}}\}' \big)\big)  + G\big( \{\nabla \bchi^{\mbox{\tiny{(2)}}}\}  + \{\boldsymbol{I} \otimes \bchi^{\mbox{\tiny{(1)}}}\} \big) \nonumber \\
&&\hspace*{80mm} -\frac{1}{\rho_0}\{\rho \bchi^{\mbox{\tiny{(1)}}} \otimes \bmu^{\mbox{\tiny{(0)}}}\} ~=~ 0 \quad \mbox{in} \quad Y, \qquad \\
\hspace*{-7mm}&&  \bnu \cdot G\big( \nabla \bchi^{\mbox{\tiny{(3)}}} + \{\boldsymbol{I} \otimes \bchi^{\mbox{\tiny{(2)}}}\}'  \big)|_{x_j=0} ~=\: - \bnu \cdot G\big( \nabla \bchi^{\mbox{\tiny{(3)}}} + \{\boldsymbol{I} \otimes \bchi^{\mbox{\tiny{(2)}}}\}'  \big)|_{x_j=\ell_j} , \quad j=\overline{1,d}, \nonumber
\end{eqnarray}
and let $\boldeta^{\mbox{\tiny{(1)}}} \in \big( H_p^1(Y)\big)^{d}$ be the unique zero-mean vector given by 
\begin{eqnarray} \label{Comparisoneta1}
&&\nabla \cdot \big( G (\nabla \boldeta^{\mbox{\tiny{(1)}}} + \boldsymbol{I} \eta^{\mbox{\tiny{(0)}}} ) \big) + G \nabla \eta^{\mbox{\tiny{(0)}}} -\frac{\rho \bchi^{\mbox{\tiny{(1)}}}}{\rho_0} ~=~ 0 \quad \mbox{in} \quad Y, \\
&&  \bnu \cdot G(\nabla \boldeta^{\mbox{\tiny{(1)}}} + \boldsymbol{I} \eta^{\mbox{\tiny{(0)}}} )|_{x_j=0} ~=\: - \bnu \cdot G(\nabla \boldeta^{\mbox{\tiny{(1)}}} + \boldsymbol{I} \eta^{\mbox{\tiny{(0)}}} )|_{x_j=\ell_j} , \quad j=\overline{1,d}. \nonumber
\end{eqnarray}
On the basis of~\eqref{w1zero}--\eqref{Comparisoneta1}, one can show that~\eqref{w3} is solved by $\tilde{w}_3 (\bx) =\bchi^{\mbox{\tiny{(3)}}} (\bx):  (i \hat{\bk})^3 w_0 + \bchi^{\mbox{\tiny{(1)}}}(\bx)   \cdot i\hat{\bk}\hh w_2 
+\boldeta^{\mbox{\tiny{(1)}}} (\bx) \cdot i \hat{\bk} + w_3$. For generality, it is noted that~\eqref{w3} is satisfied even for non-trivial values of~$w_1$ provided that the term $w_1\{\bchi^{\mbox{\tiny{(2)}}} (\bx) : (i \hat{\bk})^2 -\big(\rho_0 \hat{\omega}^2 + \bmu^{\mbox{\tiny{(0)}}}: (i\hat{\bk})^2 \big)\eta^{\mbox{\tiny{(0)}}}(\bx)\}$ is added to~$\tilde{w}_3$. 

To complete the second-order expansion of~$\tilde{Z}^e$, we also need the $O(\epsilon^2)$ contribution to~\eqref{ComparisonAnsatzEqn}, which reads 
\begin{multline*}
-\nabla \!\cdot\! \big( G(\bx) \nabla \tilde{w}_4\big) - \nabla \!\cdot\! \big( G(\bx)  i \hat{\bk}\hh \tilde{w}_3\big) -  i \hat{\bk} \!\cdot\! \big( G(\bx) \nabla \tilde{w}_3\big) \\
-  i\hat{\bk} \!\cdot\! \big( G(\bx) i \hat{\bk}\hh \tilde{w}_2\big) - \hat{\omega}^2 \rho(\bx)\hh \tilde{w}_2 ~=~ 0 \quad \mbox{in} \quad Y.
\end{multline*}
Averaging this result over $Y$ yields the equation for constant~$w_2$ as 
\begin{multline} \label{ComparisonSecondOrderHomogenizationSecondOrderPDE}
\big( {\bmu}^{\mbox{\tiny{(2)}}} \!: (i\hat{\bk})^4 + {\brho}^{\mbox{\tiny{(2)}}} \!: (i\hat{\bk})^2  ~  \hat{\omega}^2  \big) w_0 + \big( {\bmu}^{\mbox{\tiny{(0)}}} \!: (i\hat{\bk})^2 + \rho_0 \hat{\omega}^2 \big) w_2  ~= \\
-\big\langle G( \nabla \boldeta^{\mbox{\tiny{(1)}}} + \boldsymbol{I}  \eta^{\mbox{\tiny{(0)}}}) \big\rangle : (i\hat{\bk})^2 - \langle\rho\eta^{\mbox{\tiny{(0)}}}\rangle \hat{\omega}^2,
\end{multline} 
where 
\begin{eqnarray} \label{SecondOrderHomogenizationSecondOrderCoefficients}
{\brho}^{\mbox{\tiny{(2)}}}  ~=~ \langle \rho {\bchi}^{\mbox{\tiny{(2)}}}  \rangle, \qquad
{\bmu}^{\mbox{\tiny{(2)}}}   ~=~  \big\{\big\langle G( \nabla \bchi^{\mbox{\tiny{(3)}}}  + \boldsymbol{I} \otimes \bchi^{\mbox{\tiny{(2)}}} ) \big\rangle \big\}. 
\end{eqnarray}

%%%
\subsubsection{Second-order approximation of~$\tilde{Z}^{e}$}
From the expressions for~$\tilde{w}_j$ $(j=0,1,2)$ and the fact that the unit cell functions $\bchi^{\mbox{\tiny{(1)}}},\bchi^{\mbox{\tiny{(2)}}}$ and $\eta^{\mbox{\tiny{(0)}}}$ each have zero mean, one in particular finds that $\langle\tilde{w}_j\rangle=w_j$. Accordingly the $Y$-average of~\eqref{wexp} yields 
\begin{eqnarray} \label{SecondOrderHomogenizationwh}
\langle \tilde{w}\rangle ~=~ \epsilon^{-2} w_0 + \epsilon^{-1} w_1 + w_2 + O(\epsilon) ~=~ \epsilon^{-2} w_0 + w_2 + O(\epsilon).   
\end{eqnarray}
Recalling~\eqref{EffectiveImpedanceEigen}, one obtains the second-order approximation of the Willis' effective impedance as 
\begin{eqnarray} \label{zeff21}
\tilde{Z}^e_2 ~=~ \frac{ \epsilon^{2}}{w_0 + \epsilon^2\hh w_2}, 
\end{eqnarray}
which is unique up to an~$O(\epsilon^5)$ residual. From~\eqref{ComparisonSecondOrderHomogenizationZerothOrderPDE}, \eqref{ComparisonSecondOrderHomogenizationSecondOrderPDE} and~\eqref{zeff21}, on the other hand, we have 
\begin{multline}\label{ComparisonSecondOrderHomogenizationConstantPDE}
\Big(\epsilon^2 \big( {\bmu}^{\mbox{\tiny{(2)}}} \!: (i\hat{\bk})^4   + {\brho}^{\mbox{\tiny{(2)}}} \!: (i\hat{\bk})^2~ \hat{\omega}^2 \big) + \big( {\bmu}^{\mbox{\tiny{(0)}}} \!: (i\hat{\bk})^2  + \rho_0 \hat{\omega}^2 \big)\Big) (\tilde{Z}^e_2)^{-1}  ~= \\  
 -\epsilon^{-2} \,-\, \big\langle G( \nabla \boldeta^{\mbox{\tiny{(1)}}} + \boldsymbol{I}  \eta^{\mbox{\tiny{(0)}}}) \big\rangle : (i\hat{\bk})^2 - \langle\rho\eta^{\mbox{\tiny{(0)}}}\rangle \hat{\omega}^2 \,+\, O(\epsilon^2).
\end{multline}
On multiplication by $\epsilon^2$, (\ref{ComparisonSecondOrderHomogenizationConstantPDE}) yields the second-order LW-LF approximation of the Willis' effective impedance as 
\begin{eqnarray}\label{zeff22}
\tilde{Z}^{e}_2(\bk,\omega) \:~\overset{\epsilon^{4}}{=}~\: \tilde{M}^{-1}_2 
\Big( \epsilon^2 \big({\bmu}^{\mbox{\tiny{(0)}}}\!: (i\hat{\bk})^{2} + \rho_0\hspace*{1pt}  \hat{\omega}^2\big) \:+\: 
\epsilon^4 \big({\bmu}^{\mbox{\tiny{(2)}}}\!: (i\hat{\bk})^{4} + {\brho}^{\mbox{\tiny{(2)}}}\!: (i\hat{\bk})^{2}\hat{\omega}^2\big)\Big)
\end{eqnarray}
where~``$\overset{\epsilon^{4}}{=}$'' signifies equality up to (and including) the~$O(\epsilon^4)$ term, and~$\tilde{M}_2$ is a polynominal in~$\bk$ and~$\omega$, namely  
\begin{eqnarray} 
\tilde{M}_2(\bk,\omega) ~=\: -1 
\,-\, \epsilon^2 \Big(\big\langle G( \nabla \boldeta^{\mbox{\tiny{(1)}}} + \boldsymbol{I}  \eta^{\mbox{\tiny{(0)}}}) \big\rangle : (i\hat{\bk})^2 + \langle\rho\eta^{\mbox{\tiny{(0)}}}\rangle \hat{\omega}^2 \Big). \label{ComparisonM2}
\end{eqnarray}

\subsection{Comparison between the effective impedances}

A comparison between~\eqref{IntroDisComparisonZ2ts} and~\eqref{zeff22} reveals that the term multiplied by~$\tilde{M}^{-1}_2$ in~\eqref{zeff21} is precisely the second-order approximation, $\tilde{\mathcal{Z}}^{e}_2$, of the effective impedance obtained via two-scale homogenization~\cite{WG15}. 
Accordingly, we arrive at the following theorem.
\begin{theorem} \label{ComparisonTheorem}
The $O(\epsilon^4)$ LW-LF approximation $\tilde{Z}^{e}_2$ of the Willis' effective impedance differs from its $O(\epsilon^4)$ counterpart $\tilde{\mathcal{Z}}^{e}_2$ obtained via two-scale homogenization~\cite{WG15} by factor $\tilde{M}_2$ which is a polynomial in~$\bk$ and~$\omega$; specifically, one has  
\begin{eqnarray} \label{ComparisonWtsZ2}
\tilde{Z}^{e}_2 ~\overset{\epsilon^{4}}{=}~ \tilde{M}^{-1}_2 \tilde{\mathcal{Z}}^{e}_2,
\end{eqnarray}
where $\tilde{\mathcal{Z}}^{e}_2$, $\tilde{Z}^{e}_2$, and~$\tilde{M}_2$ are given respectively by~\eqref{IntroDisComparisonZ2ts}, \eqref{zeff22}, and~\eqref{ComparisonM2}. 
\end{theorem}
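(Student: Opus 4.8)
The plan is to obtain \eqref{ComparisonWtsZ2} as a direct identification, since the substantive asymptotic work has already been carried out in establishing the factored form \eqref{zeff22}. First I would place the two second-order impedances side by side: the two-scale result \eqref{IntroDisComparisonZ2ts} for $\tilde{\mathcal{Z}}^{e}_2$, and the Willis expansion \eqref{zeff22} for $\tilde{Z}^{e}_2$. Reading off \eqref{zeff22}, the Willis impedance equals $\tilde{M}^{-1}_2$ multiplied by a parenthetical polynomial in $(\bk,\omega)$; comparing this polynomial term-by-term with \eqref{IntroDisComparisonZ2ts}, the $O(\epsilon^2)$ contributions coincide (both equal $\bmu^{(0)}\!:(i\hat{\bk})^2 + \rho_0\hat{\omega}^2$) and the $O(\epsilon^4)$ contributions coincide (both equal $\bmu^{(2)}\!:(i\hat{\bk})^4 + \brho^{(2)}\!:(i\hat{\bk})^2\hat{\omega}^2$), so the parenthetical factor is exactly $\tilde{\mathcal{Z}}^{e}_2$. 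Substituting this identification into \eqref{zeff22} yields \eqref{ComparisonWtsZ2}, with the understanding that ``$\overset{\epsilon^{4}}{=}$'' retains terms through $O(\epsilon^4)$ and discards the $O(\epsilon^5)$ residual inherited from \eqref{zeff21}.

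A cleaner way to package the same argument, which I would prefer to present, is to start from \eqref{ComparisonSecondOrderHomogenizationConstantPDE} and multiply through by $\epsilon^2$. The left-hand coefficient then collapses to $\epsilon^2(\bmu^{(0)}\!:(i\hat{\bk})^2 + \rho_0\hat{\omega}^2) + \epsilon^4(\bmu^{(2)}\!:(i\hat{\bk})^4 + \brho^{(2)}\!:(i\hat{\bk})^2\hat{\omega}^2)$, which is precisely $\tilde{\mathcal{Z}}^{e}_2$ of \eqref{IntroDisComparisonZ2ts}, while the right-hand side becomes $-1 - \epsilon^2(\langle G(\nabla\boldeta^{(1)} + \bI\eta^{(0)})\rangle\!:(i\hat{\bk})^2 + \langle\rho\eta^{(0)}\rangle\hat{\omega}^2) + O(\epsilon^4)$, which is precisely $\tilde{M}_2$ of \eqref{ComparisonM2}. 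Thus \eqref{ComparisonSecondOrderHomogenizationConstantPDE} reads $\tilde{\mathcal{Z}}^{e}_2\,(\tilde{Z}^{e}_2)^{-1} \overset{\epsilon^{4}}{=} \tilde{M}_2$, and since $\tilde{M}_2 = -1 + O(\epsilon^2)$ is invertible for small $\epsilon$, rearranging gives $\tilde{Z}^{e}_2 \overset{\epsilon^{4}}{=} \tilde{M}^{-1}_2\tilde{\mathcal{Z}}^{e}_2$. This route has the merit of exhibiting the modulation factor $\tilde{M}_2$ as the homogenized image of the source term, its leading $-1$ tracing back to the unit forcing in \eqref{CellPDEfw}, which is the conceptual heart of the paper.

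The genuinely delicate point is not the final rearrangement but the verification that the two expansions furnish the \emph{same} homogenization coefficients $\rho_0,\bmu^{(0)},\bmu^{(2)},\brho^{(2)}$, for only then is the term-by-term matching legitimate. Both the Willis cell functions $\bchi^{(1)},\bchi^{(2)},\bchi^{(3)},\eta^{(0)},\boldeta^{(1)}$ introduced in Section~\ref{ComparisonWillisTwoscale} and the two-scale correctors of~\cite{WG15} are generated by the same unit-cell operator $-\nabla\!\cdot\!(G\nabla)$ associated with \eqref{PDE}, so the coefficients \eqref{SecondOrderHomogenizationZerothOrderCoefficients} and \eqref{SecondOrderHomogenizationSecondOrderCoefficients} ought to coincide with their two-scale namesakes; I would confirm this by checking that the cell problems \eqref{ComparisonEqnchi1}, \eqref{Comparisonchi2}, and \eqref{Comparisonchi3} reproduce the corrector hierarchy underlying \eqref{IntroDisComparisonZ2ts}. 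The remaining obstacle is pure bookkeeping in the asymptotic orders: one must use the vanishing of the $O(\epsilon^{-1})$ mean correction (secured by $w_1=0$ in \eqref{w1zero}) to justify the form \eqref{SecondOrderHomogenizationwh} of $\langle\tilde{w}\rangle$, so that \eqref{zeff21} approximates $\langle\tilde{w}\rangle^{-1}$ with an $O(\epsilon^5)$ remainder and the claimed equality \eqref{ComparisonWtsZ2} holds through $O(\epsilon^4)$.
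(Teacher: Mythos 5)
Your proposal is correct and takes essentially the same route as the paper: the paper's own proof consists precisely of multiplying \eqref{ComparisonSecondOrderHomogenizationConstantPDE} by $\epsilon^2$ to reach the factored form \eqref{zeff22} and then identifying the parenthetical polynomial with $\tilde{\mathcal{Z}}^{e}_2$ of \eqref{IntroDisComparisonZ2ts}, which is exactly your preferred second route (your first route is the paper's concluding observation verbatim). The coefficient-matching concern you flag is resolved in the paper by construction -- $\rho_0$, ${\bmu}^{\mbox{\tiny{(0)}}}$, ${\brho}^{\mbox{\tiny{(2)}}}$, ${\bmu}^{\mbox{\tiny{(2)}}}$ in \eqref{IntroDisComparisonZ2ts} are \emph{defined} through the same cell problems \eqref{ComparisonEqnchi1}, \eqref{Comparisonchi2}, \eqref{Comparisonchi3} that generate the two-scale correctors, with the correspondence to the two-scale hierarchy made explicit in Section~\ref{last} -- so no separate verification is required.
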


\begin{remark}\label{rem8}
Relationship~\eqref{ComparisonWtsZ2} demonstrates that~$\tilde{Z}^{e}_2(\bk,\omega)=0$ if and only if~$\tilde{\mathcal{Z}}^{e}_2(\bk,\omega)=0$, i.e. that~$\tilde{Z}^{e}_2$ and~$\tilde{\mathcal{Z}}^{e}_2$ both recover the dispersive relationship~$\mathcal{D}^e(\bk,\omega)=0$ in the LW-LF regime. One may further note from~\eqref{ComparisonM2} that to the leading order, $\tilde{Z}^{e}_2$ and~$\tilde{\mathcal{Z}}^{e}_2$ carry the opposite sign. This is a reflection of the fact that the two-scale homogenization approach in~\cite{WG15} analyzes the negative of~\eqref{PDE} with $f=0$ and~$\bgamma=\bzero$. 
% The above analysis can be continued to obtain higher-order approximations of the Willis' effective impedance. It is inferred that an~$N$th-order approximation $\tilde{Z}^{e}_N$ ($N\!>\!2$) of the Willis' effective impedance differs from its counterpart~$\tilde{\mathcal{Z}}^{e}_N$ obtained via two-scale homogenization by a factor $\tilde{M}_N$, which is a polynomial in~$\bk$ and~$\omega$. 
\end{remark}

\begin{remark}
When the mass density $\rho$ is constant over~$Y$, the coefficient of homogenization~$\eta^{\mbox{\tiny{(0)}}}$ vanishes identically -- which leaves the~$O(\epsilon^2)$ contribution in~\eqref{ComparisonM2} as~$\langle G \nabla \boldeta^{\mbox{\tiny{(1)}}}\rangle:(i\hat{\bk})^2$. On multiplying~(\ref{Comparisoneta1}) by~$\bchi^{\mbox{\tiny{(1)}}}$ and integrating the result by parts, on the other hand, one finds that~$\langle G \nabla \boldeta^{\mbox{\tiny{(1)}}}\rangle =\langle   \bchi^{\mbox{\tiny{(1)}}} \otimes \bchi^{\mbox{\tiny{(1)}}}\rangle$ which reduces $\tilde{M}_2$ to
\begin{eqnarray*}
\tilde{M}_2 ~=\: -1 \hh-\hh \epsilon^2 \langle\bchi^{\mbox{\tiny{(1)}}} \!\otimes \bchi^{\mbox{\tiny{(1)}}} \rangle : (i \hat{\bk})^2 .
\end{eqnarray*}
Recently, the tensor coefficient $\langle\bchi^{\mbox{\tiny{(1)}}} \!\otimes \bchi^{\mbox{\tiny{(1)}}}\rangle $ was obtained in~\cite{ABV16} via two-scale homogenization as a core of the second-order, source-term correction when analyzing the (time domain) wave equation in periodic media with~$\rho=\text{const.}$ and~$f\neq 0$.
\end{remark}

\begin{remark}\label{rem10}
When the pair~$(\bk,\omega)$ does not lie on a dispersion curve i.e.~$\mathcal{D}^e(\bk,\omega)\neq 0$, \eqref{zeff22} and~\eqref{ComparisonM2} demonstrate that $\langle\tilde{u}\rangle=O(\epsilon^{-2}\tilde{f})$ in the LW-LF regime~\eqref{LW-LF} thanks to the fact that $\tilde{Z}^e\langle\tilde{u}\rangle=\tilde{f}$. 
\end{remark}

\begin{remark}
Assume that ~$\mathcal{D}^e(\bk,\omega)\neq 0$, and consider the Bloch wave equation~\eqref{CellPDEBC1} with~$\tilde{f}\neq 0$ and \mbox{$\tilde{\bgamma}=\bzero$}. Then the definition of the effective impedance \eqref{EffectiveImpedanceEigen}, expansion~\eqref{zeff22}--\eqref{ComparisonM2}, relationship~\eqref{ComparisonWtsZ2}, and Remark~\ref{rem10} show that   
\begin{eqnarray}\label{compa1}
\tilde{Z}^e_2\langle\tilde{u}\rangle ~\overset{\epsilon^{2}}{=}~ \tilde{f}, \qquad \tilde{\mathcal{Z}}^e_2\langle\tilde{u}\rangle ~\overset{\epsilon^{2}}{=}~ \tilde{M}_2\hh\tilde{f},
\end{eqnarray}
in the LW-LF regime~\eqref{LW-LF}. The second equality in~\eqref{compa1} in particular shows that the two-scale homogenization analysis~\cite{PBL78}, which normally focuses on the propagation of free waves i.e. postulates~$\tilde{f}=0$, must be appended to properly account for the presence of the source term in the wave equation. This issue was recently addressed in~\cite{ABV16} assuming~$\rho=\text{const.}$, and will be pursued shortly in the general case when $G=G(\bx)$ and~$\rho=\rho(\bx)$, $\bx\in Y$.
\end{remark}

%%%
\subsection{PDE interpretation} \label{PDEIntepretationBodyForce}
Theorem~\ref{ComparisonTheorem} covers the time-harmonic wave equation~\eqref{PDE} in~$\mathbb{R}^d$ by considering the Bloch-wave setting~\eqref{CellPDE} and assuming the LW-LF regime where $\omega=\epsilon \hat{\omega}$ and ${\bk}= \epsilon \hat{\bk}$ as $\epsilon\to 0$, while $Y$ remains fixed. This limiting problem can be alternatively cast as a situation where $|Y|^{1/d}=O(\epsilon)$ as $\epsilon\to 0$, while the frequency remain fixed. In the latter limit that is inherent to the two-scale homogenization analysis~\cite{PBL78}, the second equality in~\eqref{compa1} can be translated into the effective second-order approximation of~\eqref{PDE} with~$f(\bx)\neq 0$ and~$\bgamma(\bx)=\bzero$ as
\begin{multline} \label{SpaceTimeSecondOrderPDE}
-\big(\omega^2\rho^{\mbox{\tiny{(0)}}} \langle u\rangle + \bmu^{\mbox{\tiny{(0)}}} \!: \nabla^2 \langle u\rangle\big) \,-\, \epsilon^2 \big(\omega^2\brho^{\mbox{\tiny{(2)}}} \!: \nabla^2 \langle u\rangle + \bmu^{\mbox{\tiny{(2)}}} :  \nabla^4 \langle u\rangle \big)  ~\overset{\epsilon^{2}}{=}~ \\
 \langle f\rangle \,+\, \epsilon^2 \big(\langle G( \nabla \boldeta^{\mbox{\tiny{(1)}}} \!+ \boldsymbol{I} \eta^{\mbox{\tiny{(0)}}})\rangle : \nabla^2 \langle f\rangle + \omega^2\langle \rho \eta^{\mbox{\tiny{(0)}}}\rangle \langle f\rangle\big) \quad\text{in~~}\mathbb{R}^d,   
\end{multline}
where 
\begin{equation}\label{gradn}
\nabla^n g = \nabla\nabla\ldots\nabla g \quad n~\text{times}. 
\end{equation}
Note that~\eqref{SpaceTimeSecondOrderPDE}, formally obtained via replacing~$i\bk$ by~$\nabla$ in the supporting expressions, generalizes the two-scale homogenization result in~\cite{WG15} by allowing for the presence of a non-trivial source term. This claim will be rigorously established in Section~\ref{last}.    

% In situations where $\rho$ is a constant, it was shown in \cite{COV06} that the constant fourth-order tensor $\bmu^{\mbox{\tiny{(2)}}}$ is sign-definite, herein inducing an appropriate coercive operator that can be used in numerical simulation such as finite element method. In this regard, a modification of body force $f$ as in equation (\ref{SpaceTimeSecondOrderPDE}) is necessary for high order homogenized equation modelling wave propagation in periodic media. 

\section{LW-LF contribution due to body eigenstrain} \label{ApproximationsECR}
Motivated by~\eqref{SpaceTimeSecondOrderPDE}, we next seek to expose the second-order approximation of~\eqref{PDE} in~$\mathbb{R}^d$ with \mbox{$f=0$} and~$\bgamma\neq\bzero$ via an LW-LF expansion of the Willis' effective model. To this end, we consider the asymptotics of $\tilde{\bv}$ satisfying~(\ref{CellPDEfv}) due to the fact that its $j$th component, $\tilde{v}_j=\tilde{\bv}\cdot\be_j$, is generated by the eigenstrain $\tilde{\bgamma}=\be_j$. Accordingly, we consider the system 
\begin{eqnarray} 
&&-( \nabla + \epsilon i \hat{\bk}) \cdot \big( G(\bx)( (\nabla + \epsilon i \hat{\bk})   \tilde{v}_j(\bx) -\be_j) \big) -\epsilon^2 \hat{\omega}^2 \rho (\bx) \tilde{v}_j(\bx) ~=~ 0 \quad\text{in}\quad Y, \label{ComparisonAnsatzEqnv}\\
&& \begin{array}{rcl}
\tilde{v}_j|_{x_j=0} &\!\!=\!\!& \tilde{v}_j|_{x_j=\ell_j}, \\*[1.2mm]
G((\nabla + \epsilon i \hat{\bk}) \tilde{v}_j -\be_j)  \cdot \bnu|_{x_j=0}  &\!\!=\!\!& -G((\nabla + \epsilon i \hat{\bk}) \tilde{v}_j -\be_j) \cdot \bnu|_{x_j=\ell_j},
\end{array} \quad j=\overline{1,d}. \qquad \label{ComparisonAnsatzEqnvBCN}
\end{eqnarray}
and expansion 
\begin{eqnarray*}
\tilde{v}_j (\bx) ~=~ \epsilon^{-2} \tilde{v}_{j0}(\bx) + \epsilon^{-1} \tilde{v}_{j1}(\bx) + \tilde{v}_{j2}(\bx) + \epsilon \tilde{v}_{j3}(\bx) + \epsilon^2 \tilde{v}_{j4}(\bx) +  \cdots.
\end{eqnarray*}
Then equation (\ref{ComparisonAnsatzEqnv})--(\ref{ComparisonAnsatzEqnvBCN}) becomes a series in $\epsilon$. In what follows, the differential equations satisfied by $\tilde{v}_{jn}$ in $Y$ are all subject to implicit (periodic) boundary conditions; in particular on setting~$\tilde{v}_{j(-1)}\equiv 0$, one has 
\begin{eqnarray} \label{ComparisonAnsatzEqnBCNvj}
\begin{array}{rcl}
\tilde{v}_{jn}|_{x_j=0} &\!\!=\!\!& \tilde{v}_{jn}|_{x_j=\ell_j}, \label{ComparisonAnsatzEqnBCDvj}\\*[1.2mm]
G(\nabla \tilde{v}_{jn}+ i \hat{\bk} \tilde{v}_{j(n-1)}) \cdot \bnu|_{x_j=0} &\!\!=\!\!& - G(\nabla \tilde{v}_{jn}+  i \hat{\bk} \tilde{v}_{j(n-1)})  \cdot \bnu|_{x_j=\ell_j}, 
\end{array} \quad j=\overline{1,d} 
\end{eqnarray}
for $n\neq 2$, and 
\begin{eqnarray} \label{ComparisonAnsatzEqnBCNvj}
\begin{array}{rcl}
\tilde{v}_{j2}|_{x_j=0} &\!\!=\!\!& \tilde{v}_{j2}|_{x_j=\ell_j}, \label{ComparisonAnsatzEqnBCDvj}\\*[1.2mm]
G(\nabla \tilde{v}_{j2}+ i \hat{\bk} \tilde{v}_{j1}-\be_j) \cdot \bnu|_{x_j=0} &\!\!=\!\!& - G(\nabla \tilde{v}_{j2}+  i \hat{\bk} \tilde{v}_{j1}-\be_j)  \cdot \bnu|_{x_j=\ell_j}, 
\end{array} \quad j=\overline{1,d}. 
\end{eqnarray}
We also denote by~$v_{jn}$ the respective constants of integration when solving for~$\tilde{v}_{jn}$, $n\geqslant 0$. 
%$$$$$$$$$$

%%%
\subsection{Leading-order approximation}
The $O(\epsilon^{-2})$ contribution to~\eqref{ComparisonAnsatzEqnv} is
\begin{eqnarray*}
-\nabla \!\cdot\! \big( G(\bx) \nabla \tilde{v}_{j0}\big) ~=~0 \quad \text{in}\quad Y,
\end{eqnarray*}
which yields $\tilde{v}_{j0}(\bx) = v_{j0}$. The $O(\epsilon^{-1})$ equation reads
\begin{eqnarray*}
-\nabla \!\cdot\! \big( G(\bx) \nabla \tilde{v}_{j1}\big) - \nabla \!\cdot\! \big( G(\bx)  i \hat{\bk}\hh \tilde{v}_{j0} \big)-  i \hat{\bk} \!\cdot\! \big( G(\bx) \nabla \tilde{v}_{j0}  \big) ~=~0 \quad \text{in}\quad Y,
\end{eqnarray*}
giving~$\tilde{v}_{j1}(\bx) = \bchi^{\mbox{\tiny{(1)}}} (\bx) \cdot i \hat{\bk}\hh\hh v_{j0} + v_{j1}$, where~$\bchi^{\mbox{\tiny{(1)}}}$ is given by~\eqref{ComparisonEqnchi1}.
The $O(1)$ contribution is
\begin{multline*}
-\nabla \!\cdot\! \big(G(\bx) \nabla \tilde{v}_{j2}\big)- \nabla \!\cdot\! \big(G(\bx) i\hat{\bk}\hh \tilde{v}_{j1}\big) -  i \hat{\bk} \!\cdot\! \big( G(\bx) \nabla \tilde{v}_{j1}\big) \\
-i \hat{\bk} \!\cdot\! \big( G(\bx)  i \hat{\bk}\hh \tilde{v}_{j0} \big) - \hat{\omega}^2 \rho(\bx) \tilde{v}_{j0} ~=\: -\nabla \!\cdot\! (G(\bx) \be_j) \quad \text{in}\quad Y.
\end{multline*}
Averaging the last result over $Y$ yields
\begin{eqnarray*}  
\big( {\bmu}^{\mbox{\tiny{(0)}}} \!: (i\hat{\bk})^2  + \rho_0 \hat{\omega}^2 \big) v_{j0} ~=~ 0,
\end{eqnarray*}
where ${\bmu}^{\mbox{\tiny{(0)}}}$ and $\rho_0$ are given by~\eqref{SecondOrderHomogenizationZerothOrderCoefficients}. Thanks to hypothesis~\eqref{hypo1}, one obtains~$v_{j0}=0$. This reduces the $O(1)$ equation to 
\begin{eqnarray*}
-\nabla \!\cdot\! \big( G(\bx) \nabla \tilde{v}_{j2}(\bx) \big) - \nabla \!\cdot\! \big( G(\bx)  i \hat{\bk}\hh  v_{j1} \big) ~=\: -\nabla \!\cdot\! (G(\bx)\be_j) \quad \text{in}\quad Y,
\end{eqnarray*}
whereby $\,\tilde{v}_{j2}(\bx) = \bchi^{\mbox{\tiny{(1)}}}(\bx) \!\cdot\! i \hat{\bk}\hh  v_{j1} -\bchi^{\mbox{\tiny{(1)}}}(\bx) \!\cdot\! \be_j + \bv_{j2}$. The~$O(\epsilon)$ contribution to~\eqref{ComparisonAnsatzEqnv} reads
\begin{multline*}
-\nabla \!\cdot\! \big(G(\bx) \nabla \tilde{v}_{j3}(\bx) \big)- \nabla \!\cdot\! \big( G(\bx)  i \hat{\bk}\hh \tilde{v}_{j2}(\bx) \big)  -  i \hat{\bk} \!\cdot\!\big(G(\bx)\nabla \tilde{v}_{j2}(\bx) \big)\\
-  i \hat{\bk} \!\cdot\! \big( G(\bx)  i \hat{\bk}\hh \tilde{v}_{j1} \big) 
- \hat{\omega}^2 \rho(\bx) \tilde{v}_{j1} ~=\; -i\hat{\bk}\!\cdot\! (G(\bx) \be_j) \quad \text{in}\quad Y,
\end{multline*}
whose $Y$-average is
\begin{eqnarray} \label{ComparisonSecondOrderHomogenizationZerothOrderPDEv}
\big({\bmu}^{\mbox{\tiny{(0)}}} \!: (i\hat{\bk})^2 + \rho_0 \hat{\omega}^2\big) v_{j1} ~=~ \be_j \!\cdot\! (\bmu^{\mbox{\tiny{(0)}}} \!\cdot\! i \hat{\bk}),
\end{eqnarray}
which makes use of the following lemma.
\begin{lemma} \label{LemmaEigenLeadingOrderSymmetry}
The following identity holds
\begin{eqnarray*}
\langle G\nabla\bchi^{\mbox{\tiny{(1)}}}\rangle ~=~ \langle G\nabla\bchi^{\mbox{\tiny{(1)}}}\rangle\tran,
\end{eqnarray*}
see Appendix, Section~\ref{App} for the proof.
\end{lemma}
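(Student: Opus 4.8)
The plan is to establish the symmetry by the classical energy/test-function argument for the quasi-static corrector problem~\eqref{ComparisonEqnchi1}, which is precisely the device used to prove symmetry of the homogenized coefficient tensor. First I would pass to the weak form componentwise. Writing $\chi^{\mbox{\tiny{(1)}}}_p:=\bchi^{\mbox{\tiny{(1)}}}\!\cdot\be_p\in H^1_p(Y)$, each component solves $\grad\!\cdot\!\big(G(\grad\chi^{\mbox{\tiny{(1)}}}_p+\be_p)\big)=0$ in $Y$, subject to the anti-periodic flux condition in~\eqref{ComparisonEqnchi1}. Multiplying by an arbitrary periodic test function $\psi\in H^1_p(Y)$ and integrating by parts over $Y$, the boundary contributions on opposite faces cancel: $\psi$ is $Y$-periodic while the normal flux $\bnu\!\cdot\!G(\grad\chi^{\mbox{\tiny{(1)}}}_p+\be_p)$ is anti-periodic by~\eqref{ComparisonEqnchi1}, so their product integrates to zero over $\partial Y$. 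This yields, for every $\psi\in H^1_p(Y)$,
\[
\int_Y G\,(\grad\chi^{\mbox{\tiny{(1)}}}_p+\be_p)\cdot\grad\psi\,\mathrm{d}\bx ~=~ 0 .
\]

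Next I would exploit that each corrector component is itself an admissible periodic test function. Choosing $\psi=\chi^{\mbox{\tiny{(1)}}}_j$ in the weak form of the equation for $\chi^{\mbox{\tiny{(1)}}}_p$, and then symmetrically $\psi=\chi^{\mbox{\tiny{(1)}}}_p$ in the weak form for $\chi^{\mbox{\tiny{(1)}}}_j$, gives the two scalar identities
\[
\int_Y G\,\grad\chi^{\mbox{\tiny{(1)}}}_p\!\cdot\grad\chi^{\mbox{\tiny{(1)}}}_j\,\mathrm{d}\bx + \int_Y G\,\be_p\!\cdot\grad\chi^{\mbox{\tiny{(1)}}}_j\,\mathrm{d}\bx ~=~ 0 ,
\]
\[
\int_Y G\,\grad\chi^{\mbox{\tiny{(1)}}}_j\!\cdot\grad\chi^{\mbox{\tiny{(1)}}}_p\,\mathrm{d}\bx + \int_Y G\,\be_j\!\cdot\grad\chi^{\mbox{\tiny{(1)}}}_p\,\mathrm{d}\bx ~=~ 0 .
\]
Since $G$ is a real scalar and the two-scale correctors are real-valued, the leading ``energy'' integrals coincide, so subtracting eliminates them and leaves $\langle G\,\be_p\!\cdot\grad\chi^{\mbox{\tiny{(1)}}}_j\rangle=\langle G\,\be_j\!\cdot\grad\chi^{\mbox{\tiny{(1)}}}_p\rangle$.

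Finally I would read off the tensor components. With the convention $\grad\bg=\be_m\otimes\partial\bg/\partial x_m$ one has $(\grad\bchi^{\mbox{\tiny{(1)}}})_{pj}=\be_p\!\cdot\grad\chi^{\mbox{\tiny{(1)}}}_j$, so the identity above reads $(\langle G\grad\bchi^{\mbox{\tiny{(1)}}}\rangle)_{pj}=(\langle G\grad\bchi^{\mbox{\tiny{(1)}}}\rangle)_{jp}$ for all $p,j\in\overline{1,d}$, which is exactly $\langle G\grad\bchi^{\mbox{\tiny{(1)}}}\rangle=\langle G\grad\bchi^{\mbox{\tiny{(1)}}}\rangle\tran$. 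The argument is analytically routine because $G\in L^\infty(Y)$ is bounded away from zero and $\bchi^{\mbox{\tiny{(1)}}}\in(H^1_p(Y))^d$, so all integrals converge and every test function used is admissible; I expect the only points requiring care to be bookkeeping of the index convention for $\grad\bchi^{\mbox{\tiny{(1)}}}$ and the (easy) verification that the periodicity of $\psi$ together with the anti-periodicity of the flux annihilates the boundary terms in the integration by parts. Note that, in contrast to the Bloch setting of Lemma~\ref{ReciprocityLemma}, the present quasi-static correctors are real, so the symmetry is with respect to the plain transpose rather than the conjugate transpose.
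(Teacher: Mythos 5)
Your proof is correct and is essentially the paper's own argument: the paper likewise tests the corrector equation \eqref{ComparisonEqnchi1} against another corrector component, integrates by parts (with the periodic/anti-periodic boundary terms cancelling), and deduces the symmetry of $\langle G\nabla\bchi^{\mbox{\tiny{(1)}}}\rangle$ from the symmetry of the Dirichlet energy integral $\int_Y G\,\nabla\chi^{\mbox{\tiny{(1)}}}_j\cdot\nabla\chi^{\mbox{\tiny{(1)}}}_\ell\,\mathrm{d}\bx$. The only cosmetic difference is that you write the two symmetric weak identities and subtract, whereas the paper writes one identity and notes its right-hand side is symmetric under the index swap.
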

%%%
\subsection{First-order corrector}
In the sequel we introduce two additional cell functions, $\tilde{\bchi}^{\mbox{\tiny{(2)}}}$ and~$\tilde{\bchi}^{\mbox{\tiny{(3)}}}$, not to be confused with ${\bchi}^{\mbox{\tiny{(2)}}}$ and~${\bchi}^{\mbox{\tiny{(3)}}}$ solving~\eqref{Comparisonchi2} and~\eqref{Comparisonchi3}, respectively. In particular, let~$\tilde{\bchi}^{\mbox{\tiny{(2)}}} \in (H_p^1(Y))^{d\times d}$ be the unique non-symmetric, second-order tensor of zero mean satisfying 
\begin{eqnarray} \label{Comparisonchi2vNonSymmetric}
\hspace{-17mm}&&\nabla \!\cdot\! \big( G  \nabla \tilde{\bchi}^{\mbox{\tiny{(2)}}} \big)  +
\big( \nabla \!\cdot\! \big( G \boldsymbol{I} \otimes \bchi^{\mbox{\tiny{(1)}}}   \big) \big)\tran + 
 G\big(\big(\nabla  \bchi^{\mbox{\tiny{(1)}}}\big)\tran + \boldsymbol{I}\big) - \frac{\rho}{\rho_0} \bmu^{\mbox{\tiny{(0)}}}  ~=~0 \quad \mbox{in} \quad Y, \\
\hspace{-17mm}&&  \big( \bnu \cdot G \nabla \tilde{\bchi}^{\mbox{\tiny{(2)}}}+ \big( \bnu \cdot \big( G \boldsymbol{I} \otimes \bchi^{\mbox{\tiny{(1)}}}   \big) \big)\tran \big)|_{x_j=0}  \nonumber \\
&&\; \hspace{40mm}=~ - \big( \bnu \cdot G \nabla \tilde{\bchi}^{\mbox{\tiny{(2)}}}+ \big( \bnu \cdot \big( G \boldsymbol{I} \otimes \bchi^{\mbox{\tiny{(1)}}}   \big) \big)\tran \big)|_{x_j=\ell_j} , \quad j=\overline{1,d}.  \nonumber
\end{eqnarray}
From~\eqref{Comparisonchi2vNonSymmetric}, one can show that $\tilde{v}_{j3} (\bx) \,=\, \bchi^{\mbox{\tiny{(2)}}} (\bx) : (i \hat{\bk})^2 v_{j1} + \bchi^{\mbox{\tiny{(1)}}}(\bx)  \cdot  i \hat{\bk}\hh v_{j2} -\be_j \cdot(\tilde{\bchi}^{\mbox{\tiny{(2)}}} \!\cdot i \hat{\bk}) + v_{j3}$. With such solution at hand, the $O(\epsilon^2)$ contribution to~\eqref{ComparisonAnsatzEqnv} can be written as 
\begin{multline*}
-\nabla \!\cdot\! \big( G(\bx) \nabla \tilde{v}_{j4}(\bx) \big)- \nabla \!\cdot\! \big( G(\bx)  i \hat{\bk}\hh \tilde{v}_{j3}(\bx) \big) -  i \hat{\bk} \!\cdot\! \big( G(\bx) \nabla \tilde{v}_{j3}(\bx) \big) \\
-  i \hat{\bk} \!\cdot\! \big( G(\bx)  i \hat{\bk}\hh \tilde{v}_{j2}(\bx) \big) - \hat{\omega}^2 \rho(\bx) \tilde{v}_{j2}(\bx)=0 \quad \text{in} \quad Y.
\end{multline*}
Averaging this result over $Y$ gives the algebraic equation for~$\tilde{v}_{j2}$ as
\begin{multline} \label{ComparisonSecondOrderHomogenizationFirstOrderPDEv}
\big( {\bmu}^{\mbox{\tiny{(1)}}} \!: (i\hat{\bk})^3   +  {\brho}^{\mbox{\tiny{(1)}}} \!\cdot i\hat{\bk}\hh \hat{\omega}^2 \big) v_{j1} + \big( {\bmu}^{\mbox{\tiny{(0)}}} \!: (i\hat{\bk})^2  + \rho_0 \hat{\omega}^2 \big) v_{j2} ~=~
\tilde{\bmu}^{\mbox{\tiny{(1)}}}: \big(\be_j \otimes (i\hat{\bk})^2 \big) + \hat{\omega}^2 {\brho}^{\mbox{\tiny{(1)}}} \!\cdot \be_j,
\end{multline}
where $\brho^{\mbox{\tiny{(1)}}}$ is defined in~(\ref{SecondOrderHomogenizationFirstOrderCoefficients}), and  $\tilde{\bmu}^{\mbox{\tiny{(1)}}}$ is a third-order tensor given by
\begin{eqnarray*}
\tilde{\bmu}^{\mbox{\tiny{(1)}}} ~=~ \big\{\big\langle G \big((\nabla \tilde{\bchi}^{\mbox{\tiny{(2)}}})\tran +\bchi^{\mbox{\tiny{(1)}}}\!\otimes \boldsymbol{I} \big)\big \rangle \big\}'.
\end{eqnarray*}
Here the partial symmetrization operator~$\{\cdot\}'$ is given by~\eqref{sympart}, and the transpose of a third order tensor is defined as~$(\boldsymbol{\tau})\tran_{k\ell m} = (\boldsymbol{\tau})_{\ell k m}$.
%%%
\subsection{Second-order corrector}
Let $\tilde{\bchi}^{\mbox{\tiny{(3)}}} \in (H_p^1(Y))^{d\times d \times d}$ be the unique non-symmetric, third-order tensor of zero mean satisfying
\begin{eqnarray} \label{Comparisonchi3vNonsymmetric}
\hspace*{-10mm}&&  \nabla \!\cdot\! \big( G  \nabla \tilde{\bchi}^{\mbox{\tiny{(3)}}} \big) +\big( \nabla \!\cdot\! \big( G \boldsymbol{I} \otimes \bchi^{\mbox{\tiny{(2)}}}   \big) \big)\tran  + G\big(  \bchi^{\mbox{\tiny{(1)}}} \otimes \boldsymbol{I} + \big\{\big(\nabla \tilde{\bchi}^{\mbox{\tiny{(2)}}} \big)\tran \big\}' \big) \nonumber \\
\hspace*{-10mm}&& \hspace*{57mm} -\; \tilde{\bmu}^{\mbox{\tiny{(1)}}}  - \big\{\bmu^{\mbox{\tiny{(0)}}} \otimes \frac{\rho \bchi^{\mbox{\tiny{(1)}}} \!-\! \brho^{\mbox{\tiny{(1)}}} }{\rho_0} \big\}' ~=~ 0 \quad \text{in} \quad Y, \\
\hspace*{-10mm}&& \big( \bnu \cdot G \nabla \tilde{\bchi}^{\mbox{\tiny{(3)}}}+ \big( \bnu \cdot \big( G \boldsymbol{I} \otimes \bchi^{\mbox{\tiny{(2)}}}   \big) \big)\tran \big)|_{x_j=0} \nonumber \\
&&\hspace*{33mm}~=\; - \big( \bnu \cdot G \nabla \tilde{\bchi}^{\mbox{\tiny{(3)}}}+ \big( \bnu \cdot \big( G \boldsymbol{I} \otimes \bchi^{\mbox{\tiny{(2)}}}   \big) \big)\tran \big)|_{x_j=\ell_j} , \quad j=\overline{1,d}, \nonumber 
\end{eqnarray}
Further, let~$\balpha^{\mbox{\tiny{(1)}}} \in (H_p^1(Y))^{d}$ be the unique vector of average $0$ that satisfies
\begin{eqnarray} \label{Comparisonalpha1}
\nabla \!\cdot\! \big( G \nabla \balpha^{\mbox{\tiny{(1)}}} \big) &\!\!\!=\!\!\!& \rho \bchi^{\mbox{\tiny{(1)}}} - \brho^{\mbox{\tiny{(1)}}} \quad \text{in} \quad Y, \\
\bnu \cdot G \nabla  \balpha^{\mbox{\tiny{(1)}}}|_{x_j=0} &\!\!\!=\!\!\!& - \bnu \cdot G \nabla  \balpha^{\mbox{\tiny{(1)}}}|_{x_j=\ell_j} , \quad j=\overline{1,d}. \nonumber
\end{eqnarray}
Then one can show that $\tilde{v}_{j4} (\bx) =\bchi^{\mbox{\tiny{(3)}}} (\bx) : (i \hat{\bk})^3  v_{j1}+ \bchi^{\mbox{\tiny{(2)}}} (\bx) : (i \hat{\bk})^2 v_{j2} + \bchi^{\mbox{\tiny{(1)}}}(\bx) \cdot  i \hat{\bk}\hh v_{j3} 
-(\rho_0 \hat{\omega}^2 + \bmu^{\mbox{\tiny{(0)}}} \!: (i\hat{\bk})^2 )\eta^{\mbox{\tiny{(0)}}} v_{j2} + \hat{\omega}^2 \balpha^{\mbox{\tiny{(1)}}} \!\cdot \be_j  - \tilde{\bchi}^{\mbox{\tiny{(3)}}}(\bx) :  ( \be_j \otimes i \hat{\bk} \otimes i \hat{\bk}) + v_{j4}$, where $\bchi^{\mbox{\tiny{(3)}}}$ is given by~\eqref{Comparisonchi3}.

To complete the analysis, we also need the~$O(\epsilon^3)$ equation which reads 
\begin{multline*}
-\nabla \!\cdot\! \big( G(\bx) \nabla \tilde{v}_{j5}(\bx) \big) - \nabla \!\cdot\! \big( G(\bx)  i \hat{\bk}\hh \tilde{v}_{j4}(\bx) \big) -  i \hat{\bk} \!\cdot\! \big( G(\bx) \nabla \tilde{v}_{j4}(\bx) \big) \\
-  i \hat{\bk} \!\cdot\! \big( G(\bx)  i \hat{\bk}\hh \tilde{v}_{j3}(\bx) \big) - \hat{\omega}^2 \rho(\bx) \tilde{v}_{j3}(\bx) ~=~0 \quad \text{in} \quad Y.
\end{multline*}
Averaging this result over $Y$ yields the equation for~$v_{j3}$ as
\begin{eqnarray} \label{ComparisonSecondOrderHomogenizationSecondOrderPDEv}
\hspace*{-10mm}&&({\bmu}^{\mbox{\tiny{(2)}}} \!: (i\hat{\bk})^4  + {\brho}^{\mbox{\tiny{(2)}}} \!: (i\hat{\bk})^2   \hat{\omega}^2 )v_{j1} + 
\big({\bmu}^{\mbox{\tiny{(1)}}} \!: (i\hat{\bk})^3   + {\brho}^{\mbox{\tiny{(1)}}} \cdot i\hat{\bk}\hh \hat{\omega}^2 -
({\bmu}^{\mbox{\tiny{(0)}}} \!: (i\hat{\bk})^2  +\rho_0 \hat{\omega}^2 ) \rho_0^{-1} \brho^{\mbox{\tiny{(1)}}} \!\cdot i\hat{\bk}\big)v_{j2} +  \nonumber \\
\hspace*{-10mm}&& 
( {\bmu}^{\mbox{\tiny{(0)}}}: (i\hat{\bk})^2   +\rho_0 \hat{\omega}^2 )\hh v_{j3} ~=~
\tilde{\bmu}^{\mbox{\tiny{(2)}}} \!: \big(\be_j \!\otimes (i\hat{\bk})^3 \big)  + \tilde{\brho}^{\mbox{\tiny{(2)}}}:(\be_j \!\otimes  i\hat{\bk} )  \hat{\omega}^2 -\langle \rho \bchi^{\mbox{\tiny{(1)}}} \!\otimes \bchi^{\mbox{\tiny{(1)}}} \rangle : (\be_j \!\otimes i \hat{\bk}) \hat{\omega}^2, 
\end{eqnarray}
where ${\bmu}^{\mbox{\tiny{(2)}}}$ and $\brho^{\mbox{\tiny{(2)}}}$ are given by~(\ref{SecondOrderHomogenizationSecondOrderCoefficients}); 
\begin{eqnarray*}
\tilde{\bmu}^{\mbox{\tiny{(2)}}} ~=~ \big\{\big\langle G \big((\nabla\tilde{\bchi}^{\mbox{\tiny{(3)}}})\tran + \tilde{\bchi}^{\mbox{\tiny{(2)}}} \!\otimes \boldsymbol{I} \big) \big\rangle \big\}', \qquad 
\tilde{\brho}^{\mbox{\tiny{(2)}}} ~=~ \langle \rho  \tilde{\bchi}^{\mbox{\tiny{(2)}}} \rangle, 
\end{eqnarray*}
and the transpose of a fourth order-tensor is defined as $\big( \boldsymbol{\tau}\big)\tran_{k\ell m n} =  \boldsymbol{\tau}_{\ell k m n}$. Note that in~(\ref{ComparisonSecondOrderHomogenizationSecondOrderPDEv}), the term $\langle \rho \bchi^{\mbox{\tiny{(1)}}} \!\otimes \bchi^{\mbox{\tiny{(1)}}} \rangle$ is due to $\langle G \nabla \balpha^{\mbox{\tiny{(1)}}} \rangle$ and the following lemma.
\begin{lemma} \label{ComparisonSecondOrderHomogenizationAlpha1}
The following identity holds
$$
\langle G \nabla \balpha^{\mbox{\tiny{(1)}}} \rangle ~=~ \langle \rho \bchi^{\mbox{\tiny{(1)}}} \!\otimes \bchi^{\mbox{\tiny{(1)}}} \rangle.
$$
see Appendix, Section~\ref{App} for the proof.
\end{lemma}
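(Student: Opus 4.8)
The plan is to prove the identity componentwise through the standard energy-pairing (integration-by-parts) argument that couples the two cell problems~\eqref{Comparisonalpha1} and~\eqref{ComparisonEqnchi1}. Writing $\balpha^{\mbox{\tiny{(1)}}}=(\alpha_1,\ldots,\alpha_d)$ and $\bchi^{\mbox{\tiny{(1)}}}=(\chi_1,\ldots,\chi_d)$, I would first record the two defining equations in scalar (componentwise) form: by~\eqref{Comparisonalpha1} and~\eqref{SecondOrderHomogenizationFirstOrderCoefficients} one has $\nabla\!\cdot\!(G\nabla\alpha_p)=\rho\chi_p-\brho^{\mbox{\tiny{(1)}}}_p$ with $\brho^{\mbox{\tiny{(1)}}}_p=\langle\rho\chi_p\rangle$, while~\eqref{ComparisonEqnchi1} gives $\nabla\!\cdot\!\big(G(\nabla\chi_q+\be_q)\big)=0$ for each~$q$. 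Both carry the periodic boundary conditions stated alongside their definitions, so when either is cast in weak form against a $Y$-periodic test function, the boundary contributions cancel.

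The key steps, in order, are the following. First, test the weak form of the $\chi_q$-equation with $\phi=\alpha_p$ to obtain $\int_Y G\nabla\chi_q\!\cdot\!\nabla\alpha_p = -\int_Y G\,\partial_q\alpha_p = -\langle G\,\partial_q\alpha_p\rangle$, which is exactly the scalar entry of the target tensor $\langle G\nabla\balpha^{\mbox{\tiny{(1)}}}\rangle$ I wish to identify. Second, test the weak form of the $\alpha_p$-equation with $\psi=\chi_q$ to obtain $-\int_Y G\nabla\alpha_p\!\cdot\!\nabla\chi_q = \int_Y(\rho\chi_p-\brho^{\mbox{\tiny{(1)}}}_p)\chi_q$. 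Since the Dirichlet pairing $\int_Y G\nabla\alpha_p\!\cdot\!\nabla\chi_q$ is symmetric in the two factors, combining the two displays gives $\langle G\,\partial_q\alpha_p\rangle = \int_Y(\rho\chi_p-\brho^{\mbox{\tiny{(1)}}}_p)\chi_q$.

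Finally, I would split the right-hand side as $\int_Y\rho\,\chi_p\chi_q - \brho^{\mbox{\tiny{(1)}}}_p\int_Y\chi_q$ and invoke the \emph{zero-mean} property $\langle\bchi^{\mbox{\tiny{(1)}}}\rangle=\bzero$ from~\eqref{ComparisonEqnchi1}, which kills the second term, leaving $\langle G\,\partial_q\alpha_p\rangle=\langle\rho\,\chi_p\chi_q\rangle$; this is precisely the $(q,p)$ entry of $\langle\rho\,\bchi^{\mbox{\tiny{(1)}}}\!\otimes\bchi^{\mbox{\tiny{(1)}}}\rangle$, establishing the claim (and incidentally showing the tensor is symmetric). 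There is no deep obstacle here; the argument is a routine symmetric-pairing of two cell correctors. The only points demanding care are the index/transpose bookkeeping in identifying $\langle G\,\partial_q\alpha_p\rangle$ with the correct entry of $\langle G\nabla\balpha^{\mbox{\tiny{(1)}}}\rangle$ (resolved by the symmetry of the outcome), and the clean cancellation of the $\brho^{\mbox{\tiny{(1)}}}$ term, which hinges entirely on the normalization $\langle\bchi^{\mbox{\tiny{(1)}}}\rangle=\bzero$ — exactly the same mechanism that underlies the companion identity in Lemma~\ref{ComparisonSecondOrderHomogenizationeta0chi1}.
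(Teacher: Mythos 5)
Your proposal is correct and follows essentially the same route as the paper's own proof: both pair the cell problem~\eqref{Comparisonalpha1} against $\bchi^{\mbox{\tiny{(1)}}}$ and the cell problem~\eqref{ComparisonEqnchi1} against $\balpha^{\mbox{\tiny{(1)}}}$, use the symmetry of $\int_Y G\nabla\alpha^{\mbox{\tiny{(1)}}}_p\!\cdot\!\nabla\chi^{\mbox{\tiny{(1)}}}_q\,\textrm{d}\bx$ to combine the two, and then eliminate the $\brho^{\mbox{\tiny{(1)}}}$ term via $\langle\bchi^{\mbox{\tiny{(1)}}}\rangle=\bzero$. The only differences are cosmetic (index labels and the order in which the two pairings are presented), and your remark on the transpose bookkeeping being resolved by the symmetry of $\langle\rho\,\bchi^{\mbox{\tiny{(1)}}}\!\otimes\bchi^{\mbox{\tiny{(1)}}}\rangle$ is a fair reading of the paper's implicit convention.
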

%%%
\subsection{Second-order approximation}
From the above results, we have 
\begin{eqnarray} \label{SecondOrderHomogenizationvh}
\langle \tilde{v}_j \rangle ~=~ \epsilon^{-1} v_{j1} +  v_{j2}  + \epsilon\hh v_{j3}+ O(\epsilon^2). 
\end{eqnarray}
By virtue of~(\ref{ComparisonSecondOrderHomogenizationZerothOrderPDEv}), (\ref{ComparisonSecondOrderHomogenizationFirstOrderPDEv}) and~(\ref{ComparisonSecondOrderHomogenizationSecondOrderPDEv}), on the other hand, one can show that 
\begin{multline} 
\epsilon^2({\bmu}^{\mbox{\tiny{(2)}}}: (i\hat{\bk})^4 + {\brho}^{\mbox{\tiny{(2)}}} \!: (i\hat{\bk})^2    \hat{\omega}^2 ) \langle\tilde{v}_j\rangle  + ( {\bmu}^{\mbox{\tiny{(0)}}} \!: (i\hat{\bk})^2   +\rho_0 \hat{\omega}^2 ) \langle\tilde{v}_j\rangle ~= \\
 \epsilon^{-1} \bmu^{\mbox{\tiny{(0)}}} \!: (\be_j \!\otimes i \hat{\bk})- \big\{\bmu^{\mbox{\tiny{(0)}}} \!\otimes \frac{\brho^{\mbox{\tiny{(1)}}}}{\rho_0}\big\}' : \big( \be_j \!\otimes (i\hat{\bk})^2  \big) + \tilde{\bmu}^{\mbox{\tiny{(1)}}} \!: \big( \be_j \!\otimes (i\hat{\bk})^2  \big)  + \hat{\omega}^2 {\brho}^{\mbox{\tiny{(1)}}} \!\cdot \be_j  \nonumber \\
+ \epsilon \tilde{\bmu}^{\mbox{\tiny{(2)}}} \!: \big(\be_j \!\otimes (i\hat{\bk})^3 \big) + \epsilon  \tilde{\brho}^{\mbox{\tiny{(2)}}}\!:(\be_j \!\otimes  i\hat{\bk} )  \hat{\omega}^2 -\epsilon \hat{\omega}^2\langle \rho \bchi^{\mbox{\tiny{(1)}}} \!\otimes \bchi^{\mbox{\tiny{(1)}}} \rangle : (\be_j \!\otimes i \hat{\bk}) \;+ O(\epsilon^2).
\end{multline}
On multiplying the last result by~$\epsilon^2$, we obtain 
\begin{eqnarray} \label{ComparisonSecondOrderHomogenizationConstantPDE1v}
\tilde{\mathcal{Z}}^{e}_2  \langle\tilde{v}_j\rangle ~=~ \tilde{N}_2 \;+\; O(\epsilon^4), 
\end{eqnarray}
where~$\tilde{\mathcal{Z}}^{e}_2$ is the two-scale impedance function given by~\eqref{IntroDisComparisonZ2ts}, and 
\begin{eqnarray*}
\tilde{N}_2 &=&  \epsilon \bmu^{\mbox{\tiny{(0)}}} \!: (\be_j \!\otimes i \hat{\bk})-\epsilon^{2} \big\{\bmu^{\mbox{\tiny{(0)}}} \!\otimes \frac{\brho^{\mbox{\tiny{(1)}}}}{\rho_0} \big\}' : \big( \be_j \!\otimes (i\hat{\bk})^2  \big) + \epsilon^{2} \tilde{\bmu}^{\mbox{\tiny{(1)}}} \!: \big( \be_j \!\otimes (i\hat{\bk})^2  \big)  +\epsilon^{2} \hat{\omega}^2 {\brho}^{\mbox{\tiny{(1)}}} \!\cdot \be_j  \nonumber \\
&& ~+ \epsilon^3 \tilde{\bmu}^{\mbox{\tiny{(2)}}} \!: \big(\be_j \!\otimes (i\hat{\bk})^3 \big) + \epsilon^3  \tilde{\brho}^{\mbox{\tiny{(2)}}}\!:(\be_j \!\otimes  i\hat{\bk} )  \hat{\omega}^2 -\epsilon^3 \hat{\omega}^2\langle \rho \bchi^{\mbox{\tiny{(1)}}} \!\otimes \bchi^{\mbox{\tiny{(1)}}} \rangle : (\be_j \!\otimes i \hat{\bk}).
\end{eqnarray*}
Here it is noted that~(\ref{SecondOrderHomogenizationvh}) can also be used to derive the second-order approximation of the Willis' effective constitutive relationship.

\subsection{PDE interpretation}

Following the arguments in Section~\ref{ApproximationsECR}\ref{PDEIntepretationBodyForce}, an effective second-order approximation of the time-harmonic wave equation~\eqref{PDE} with~$f(\bx)=0$ and~$\bgamma(\bx)\neq\bzero$ can now be written as 
\begin{multline} \label{SpaceTimeSecondOrderPDEgam}
-\big(\omega^2 \rho^{\mbox{\tiny{(0)}}} \langle u\rangle + \bmu^{\mbox{\tiny{(0)}}} \!:  \nabla^2 \langle u\rangle \big) - \epsilon^2 \big(\omega^2 \brho^{\mbox{\tiny{(2)}}} \!: \nabla^2 \langle u\rangle + \bmu^{\mbox{\tiny{(2)}}} \!:  \nabla^4 \langle u\rangle \big) ~=~ \\
   -\bmu^{\mbox{\tiny{(0)}}} \!: (\bgam \nabla) \,+\, \epsilon \big(\big\{\bmu^{\mbox{\tiny{(0)}}} \!\otimes \frac{\brho^{\mbox{\tiny{(1)}}}}{\rho_0} \big\}' : (\bgam \nabla^2) - \tilde{\bmu}^{\mbox{\tiny{(1)}}} \!: \big( \bgam \nabla^2  \big) - \hh \omega^2 {\brho}^{\mbox{\tiny{(1)}}} \!\cdot \bgam \big) \\
 -  \epsilon^2 \big(\tilde{\bmu}^{\mbox{\tiny{(2)}}} \!: \big(\bgam \nabla^3  \big) + \omega^2 \tilde{\brho}^{\mbox{\tiny{(2)}}}\!:(\bgam\nabla)    
- \omega^2 \langle \rho \bchi^{\mbox{\tiny{(1)}}} \otimes \bchi^{\mbox{\tiny{(1)}}} \rangle : (\bgam\nabla)\big) \quad \text{in} \quad \mathbb{R}^d,
\end{multline}
where $\bgam\nabla$ denotes gradient \emph{to the left} i.e. $(\bgam \nabla) = \partial\bgam/\partial x_j \otimes \be_j$, and $\bgamma\nabla^n$ is defined by analogy to~\eqref{gradn}. Here it is interesting to note that, in contrast to~\eqref{SpaceTimeSecondOrderPDE}, the second-order approximation~\eqref{SpaceTimeSecondOrderPDEgam} also includes an~$O(\epsilon)$ correction. 

\section{Generalization of the two-scale homogenization approach}\label{last}

In what follows, we demonstrate how the two-scale homogenization approach~\cite{PBL78} can be generalized to handle~\eqref{PDE} with a non-trivial source term ($f\neq 0$, $\bgamma=\bzero$), thus recovering the second-order effective equation~\eqref{SpaceTimeSecondOrderPDE} governing the mean motion $\langle u \rangle$ in~$\mathbb{R}^d$. As examined earlier, we adopt the standard premise of the two-scale analysis that $|Y|^{1/d}=O(\epsilon)$ as $\epsilon\to 0$, while the frequency remain fixed. Specifically, we consider the time-harmonic wave equation 
\begin{eqnarray}\label{newpde}
-\omega^2 \rho(\frac{\bx}{\epsilon})  u \;-\; \nabla \cdot \big( G(\frac{\bx}{\epsilon}) \nabla u  \big) ~=~ f \quad \text{in} \quad \mathbb{R}^d,
\end{eqnarray}
where $G$ and $\rho$ are $Y$-periodic, $\omega=O(1)$, and~$\epsilon=o(1)$. We seek the solution in the form 
\begin{eqnarray}\label{newsol}
 u(\bx) \quad \to \quad u(\bx,\by)~=~\sum_{n=0}^\infty \epsilon^n u_n(\bx,\by), \qquad \by ~=~ \frac{\bx}{\epsilon},
\end{eqnarray}
where $\by$ is the so-called ``fast'' variable describing the variations due to periodic microstructure such that $\langle u(\bx)\rangle = \langle u(\bx,\by)\rangle_{\by}$. On substituting~\eqref{newsol} into~\eqref{newpde}, one obtains 
\begin{eqnarray}\label{newpde2}
-\omega^2 \rho(\by)\hh u - \big(\nabla_{\!\bx}+ \frac{1}{\epsilon} \nabla_{\!\by}\big) \big(G(\by) (\nabla_{\!\bx}+ \frac{1}{\epsilon} \nabla_{\!\by}) u \big) ~=~ f(\bx,\by),
\end{eqnarray}
where we allow for~$f$ to have small-scale fluctuations due to microstructure (e.g. acoustic radiation force generated by high-intensity ultrasound field acting upon a periodic medium). Since~\eqref{newpde2} is now a series in~$\epsilon$, by the hierarchy of equations we find that the second-order solution in~\cite{WG15} can be generalized as 
\begin{eqnarray*}
u_0(\bx,\by) &\!\!\!\!=\!\!\!& U_0(\bx), \\
u_1(\bx,\by) &\!\!\!\!=\!\!\!& U_1(\bx) \:+\: \bchi^{\mbox{\tiny{(1)}}}(\by) \cdot\! \nabla_{\!\bx} U_0(\bx), \\
u_2(\bx,\by) &\!\!\!\!=\!\!\!& U_2(\bx) \:+\:\bchi^{\mbox{\tiny{(2)}}}(\by) :\!\nabla_{\!\bx}^2 U_0(\bx)\:+\:\bchi^{\mbox{\tiny{(1)}}}(\by) \cdot\! \nabla_{\!\bx} U_1(\bx)+ \eta^{\mbox{\tiny{(0)}}}(\by) \langle f\rangle_{\by}(\bx), \\
u_3(\bx,\by) &\!\!\!\!=\!\!\!& U_3(\bx)  +  \bchi^{\mbox{\tiny{(3)}}}(\by) :\!\nabla_{\!\bx}^3 U_0(\bx)  +  \bchi^{\mbox{\tiny{(2)}}}(\by) :\!\nabla_{\!\bx}^2 U_1(\bx)  + \bchi^{\mbox{\tiny{(1)}}}(\by)\!\cdot\! \nabla_{\!\bx} U_2(\bx) \\
&&+ ~\boldeta^{\mbox{\tiny{(1)}}}(\by) \cdot\! \nabla_{\!\bx} \langle f\rangle_{\by}(\bx), 
\end{eqnarray*}
to account for the source term~$f$, where~$U_j$ ($j\!=\!0,1,2$) solve the cascade of differential equations 
\begin{eqnarray} \label{newpde3}
\rho^{\mbox{\tiny{(0)}}} \omega^2 U_0 + \bmu^{\mbox{\tiny{(0)}}} \!: \nabla_{\!\bx}^2 U_0 &\!\!\!=\!\!\!& -\langle f\rangle_{\by}, \\
\rho^{\mbox{\tiny{(0)}}} \omega^2 U_1 + \bmu^{\mbox{\tiny{(0)}}} \!: \nabla_{\!\bx}^2 U_1 &\!\!\!=\!\!\!& 0, \label{newpde3b}\\
\rho^{\mbox{\tiny{(0)}}} \omega^2 U_2 + \bmu^{\mbox{\tiny{(0)}}} \!: \nabla_{\!\bx}^2 U_2 + \rho^{\mbox{\tiny{(2)}}} \omega^2: \nabla_{\!\bx}^2 U_0 + \bmu^{\mbox{\tiny{(2)}}} \!: \nabla_{\!\bx}^4 U_0 &\!\!\!=\!\!\!& - \langle G (\nabla \boldeta^{\mbox{\tiny{(1)}}} \!+ \boldsymbol{I} \eta^{\mbox{\tiny{(0)}}})\rangle : \nabla_{\!\bx}^2 \langle f\rangle_{\by} \nonumber \\
&&-\omega^2 \langle \rho \eta^{\mbox{\tiny{(0)}}} \rangle \langle f\rangle_{\by}  . \label{newpde4}
\end{eqnarray}
On recalling that $\langle f(\bx,\by)\rangle_{\by} = \langle f(\bx)\rangle$ by definition and considering the second-order, mean-field approximation
\[
\langle u(\bx) \rangle ~=~ \sum_{n=0}^{2} \epsilon^{n} \langle u_n(\bx,\by) \rangle_{\by} \,+\, o(\epsilon^2) \,~=~  \sum_{n=0}^{2} \epsilon^{n} U_n(\bx) \,+\, o(\epsilon^2),
\]
one immediately recovers~\eqref{SpaceTimeSecondOrderPDE} by the weighted summation of~\eqref{newpde3}--\eqref{newpde4}.
\begin{remark}
As can be seen from~\eqref{newpde3b}, the homogenized source term has no~$O(\epsilon)$ contribution, which is consistent with the previous two-scale analysis of free waves~\cite{CGM16} stating that the first-order correction~$u_1(\bx,\by$) vanishes in the mean, i.e. $U_1(\bx)=0$. 
\end{remark}
\begin{remark} The two-scale homogenization road to~\eqref{SpaceTimeSecondOrderPDE} can be understood as a two-stage paradigm, where the solution is (i) first expanded in~$\epsilon$ according to~\eqref{newsol}, and then (ii) averaged to arrive at the hierarchical mean-field equations~\eqref{newpde3}--\eqref{newpde4}. In contrast, by adopting the Willis' approach we first average the wavefield solving~\eqref{CellPDE} via an effective constitutive description~\eqref{IntroEffectiveWillis}, and then expand the obtained mean solution in powers of~$\epsilon$. It is perhaps remarkable that, at least under the hypotheses made in this work, the operations of asymptotic expansion and averaging commute when deriving~\eqref{SpaceTimeSecondOrderPDE}. In this vein, using the Willis' approach to obtain the second-order LW-LF approximation can also be thought of as a ``single-scale'' homogenization framework. 
\end{remark}

\section{Numerical example}\label{ImpedanceNumerics}

In this section, we illustrate by a simple example the second-order asymptotics of the Willis' effective impedance, and compare this approximation with its counterpart derived via two-scale homogenization. In particular, we consider the \emph{one-dimensional} periodic structure where the unit cell $Y=(0,1)$ is composed of two homogeneous phases:
\begin{eqnarray}\label{1dmodel}
\rho(x)=1, & G(x)=1 \quad &\mbox{for} \quad 0<x<\tfrac{1}{2}, \\
\rho(x)= \gamma_\rho, & G(x)= \gamma_{\mbox{\tiny{$G$}}} \quad &\mbox{for} \quad \tfrac{1}{2}<x<1. 
\end{eqnarray}
The exact dispersion relationship for this periodic structure is computed using the \emph{bvp4c} function in Matlab. The constants of homogenization $\rho^{\mbox{\tiny{($\ell$)}}}$ and $\mu^{\mbox{\tiny{($\ell$)}}}$ ($\ell=0,1,2$), which are independent of~$k$ and~$\omega$, are computed using FreeFem++ \cite{H13} and Matlab.

As can be seen from Fig.~\ref{0101}, the dispersion curve in the~$(k,\omega)$ space stemming from the second-order model $\tilde{Z}^e_2(k,\omega)=0\,$ provides markedly better LW-LF approximation of the exact relationship than the quasi-static model $\omega=k \sqrt{\mu^{\mbox{\tiny{(0)}}}/\rho^{\mbox{\tiny{(0)}}}}$. In particular, $\tilde{Z}^e_2$ is deemed to furnish a satisfactory approximation up to $k\simeq 2$, which covers more than one half of the first Brillouin zone $k\in[0,\pi)$. For completeness, Fig.~\ref{0601} plots the modulation polynomial~$\tilde{M}_2(k,\omega)$ as given by~\eqref{ComparisonM2} over the region $[0,2\pi)\times[0,2\pi)$. It is noted that for $0\leqslant k\lesssim 2$ where the second-order approximation applies according to Fig.~\ref{0101}, the magnitude of~$\tilde{M}_2$ may drop down to less than 60\% of its quasi-static value~$|\tilde{M}_2|=1$, thus highlighting the necessity to modulate the source term as in~\eqref{compa1} or equivalently~\eqref{SpaceTimeSecondOrderPDE} when using the multiple-scales homogenization approach to study waves due to body forces in periodic media. 

The main result of this work, given by Theorem~\ref{ComparisonTheorem}, is illustrated in Fig.~\ref{WillisTwoScale} which compares the Willis' effective impedance $\tilde{Z}^e_2 (k,\omega)$ with its two-scale counterpart~$\tilde{\mathcal{Z}}^e_2(k,\omega)$ over the region $[0,2\pi)\times[0,2\pi)$. As can be seen from the display, the two second-order approximations of the effective impedance \emph{share} the zero-level set, i.e. $\,\tilde{Z}^e_2 (k,\omega)=0 \:\Leftrightarrow\:  \tilde{\mathcal{Z}}^e_2(k,\omega)=0\,$ (as given by the second contour line from the bottom), see also Remark~\ref{rem8}. Away from the dispersion curve, however, the two approximations exhibit notable differences, especially for the bi-laminate with~$\gamma_\rho=\gamma_{\mbox{\tiny{$G$}}}=0.1$. 

\begin{figure}[hb!] 
\centering{\includegraphics[height=2.0in]{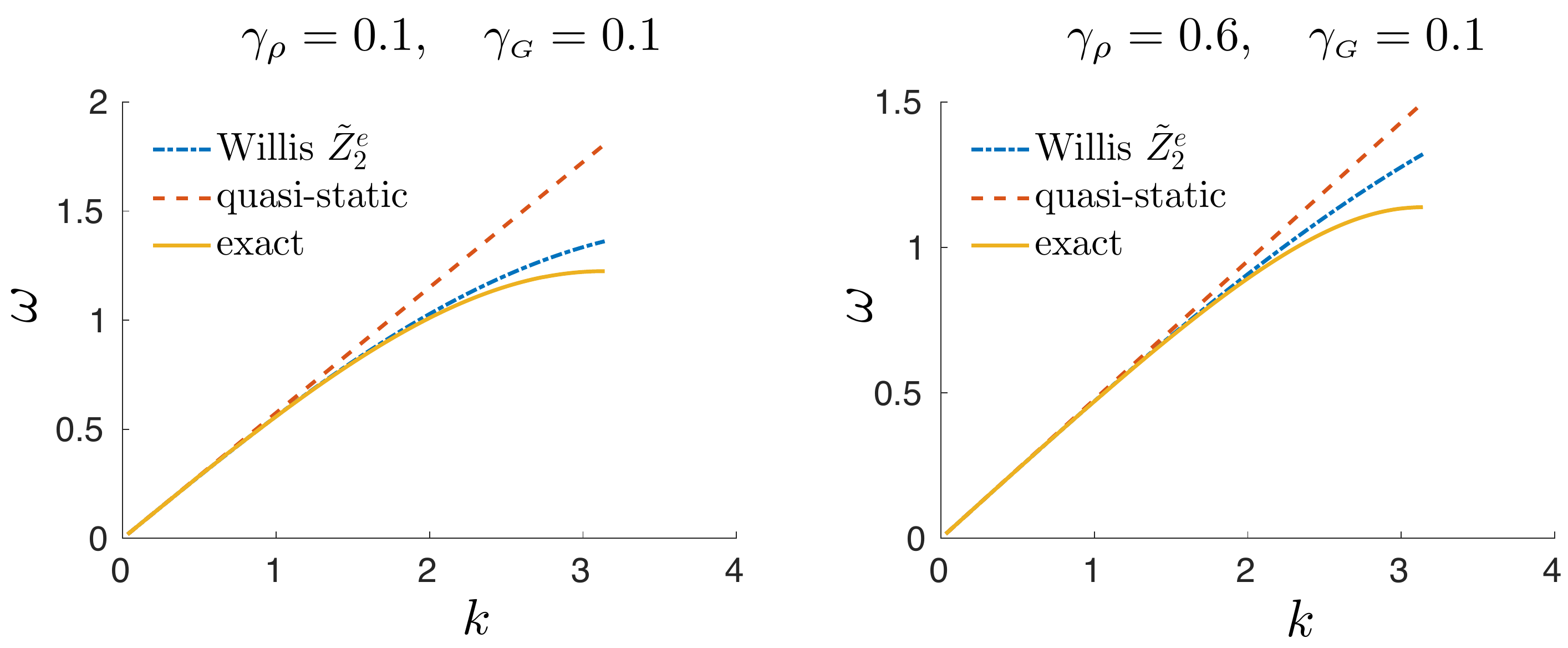}}\vspace*{-2mm}
\caption{First branch of the dispersion relationship for the bi-laminate periodic structure~\eqref{1dmodel} for two example values of~$(\gamma_{\rho},\gamma_{\mbox{\tiny{$G$}}})$: exact solution (solid line) versus second-order LW-LF approximation of the Willis' effective model (dot-dashed line) and the reference quasi-static approximation.} \label{0101}
\end{figure}

\begin{figure}[ht!] 
\centering{\includegraphics[height=2.0in]{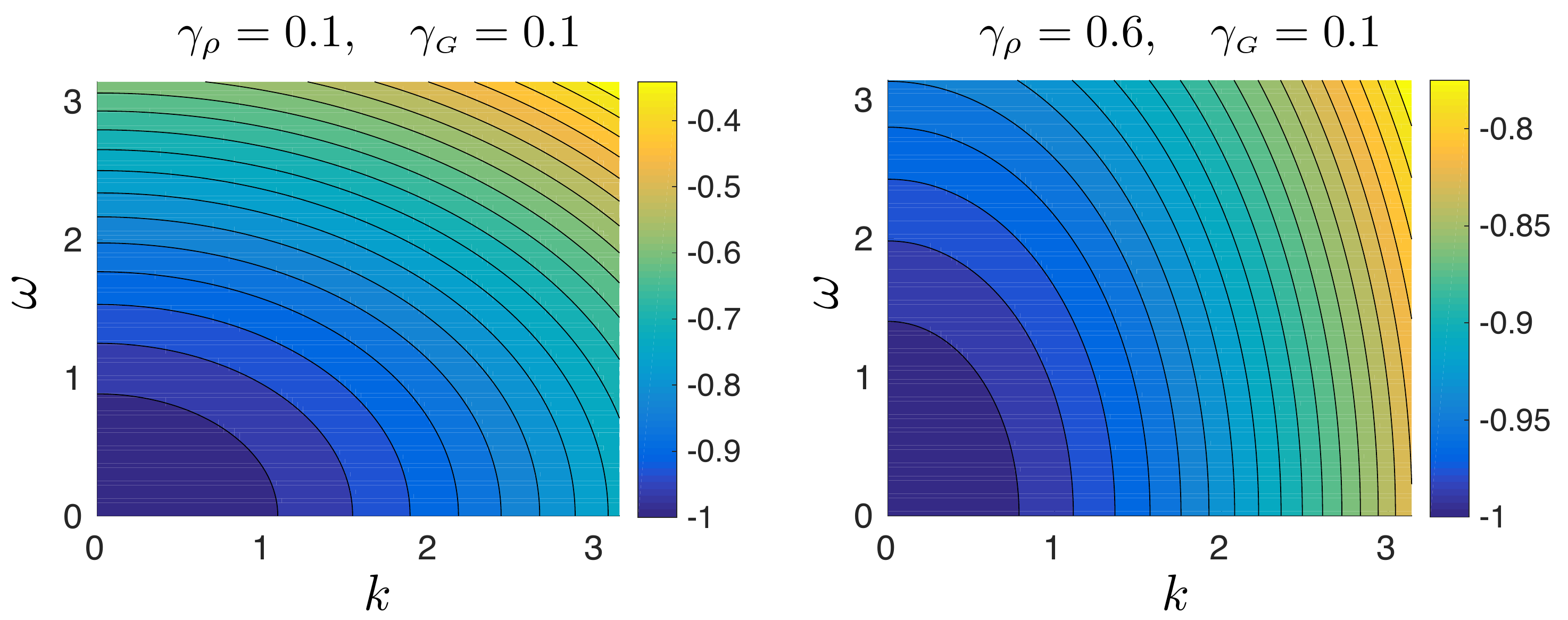}}\vspace*{-2mm}
\caption{Contour map of the modulation polynomial~$\tilde{M}_2(k,\omega)$ for the bi-laminate periodic structure~\eqref{1dmodel}. }\label{0601}
\end{figure}

\begin{figure}[h!] 
\centering{\includegraphics[height=4.0in]{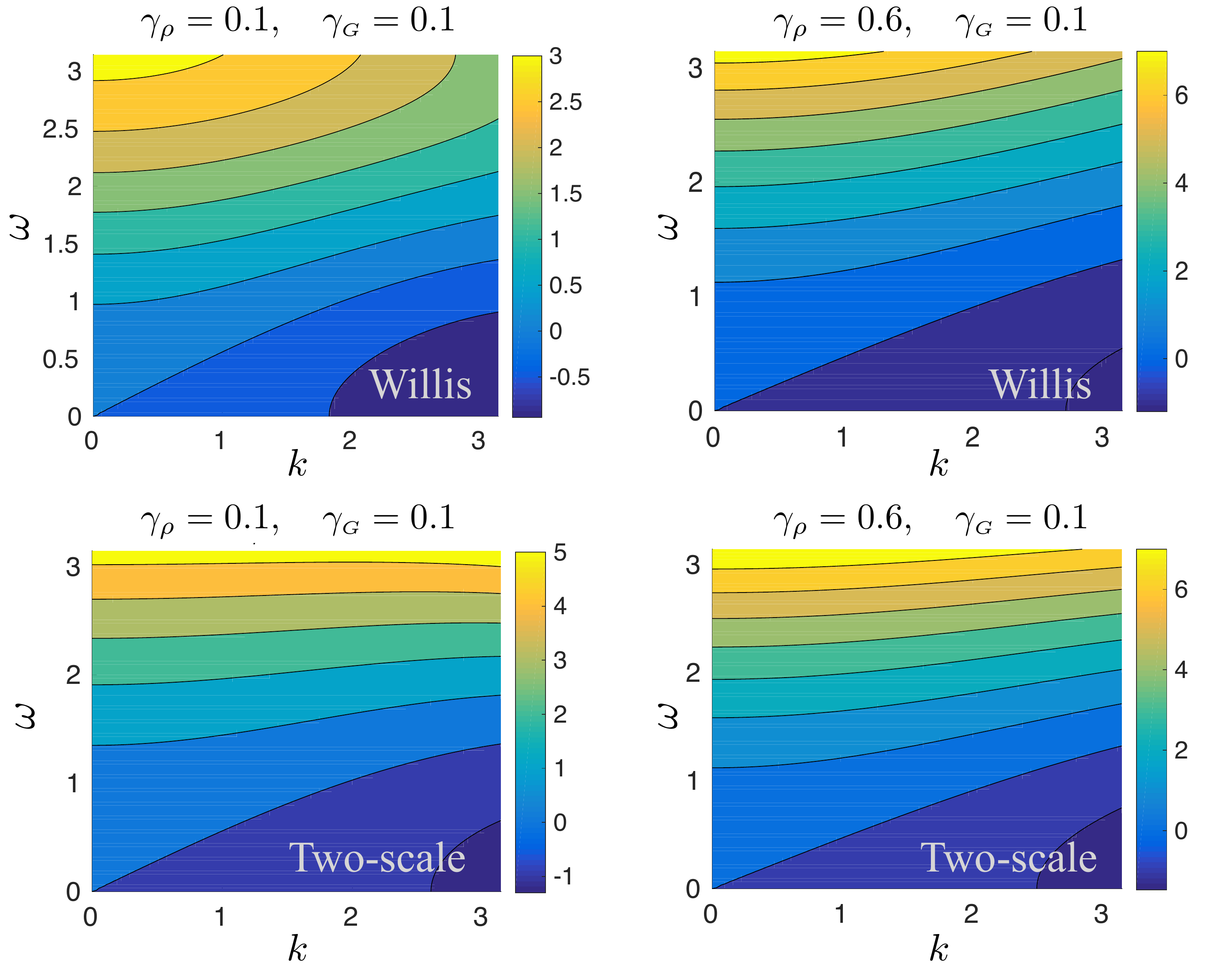}}\vspace*{-2mm}
\caption{Contour maps of the Willis' effective impedance $\tilde{Z}^e_2 (k,\omega)$ (second-order LW-LF approximation) and its two-scale counterpart~$\tilde{\mathcal{Z}}^e_2(k,\omega)$ for the bi-laminate periodic structure~\eqref{1dmodel}.}\label{WillisTwoScale}
\end{figure}

\newpage 
\section{Summary and conclusions}

In this work we aim to expose the link between the Willis' effective description and the two-scale homogenization framework pertaining to the scalar wave motion in periodic media. To this end, we deploy the concept of effective impedance as a tool for comparison, and first formulate the Willis' model by the eigenfunction approach. The latter carries the advantage of (i)~seamlessly traversing the wavenumber-frequency space across dispersion curves, and (ii)~providing a clear insight into the phenomena of double points (e.g. intersecting dispersion curves) and the eigenmodes of zero mean that cannot be captured by the effective model. We next establish a long-wavelength, low-frequency (LW-LF) dispersive expansion of the Willis effective model, including terms up to the second order. Despite the intuitive expectation that such obtained effective impedance coincides with its two-scale counterpart, we find that the two descriptions differ by a modulation factor which is, up to the second order, expressible as a polynomial in frequency and wavenumber. We rigorously link this inconsistency to the fact that the two-scale homogenization is commonly restricted to the \emph{free-wave} solutions and thus fails to account for the body source term which, as it turns out, must also be homogenized -- by the reciprocal of the featured modulation factor. Through the exercise, we also discover that the operations of averaging (i.e. homogenization) and asymptotic expansion \emph{commute} when computing the second-order LW-LF approximation of the effective wave motion in periodic media. For generality, we further obtain the modulation factor for the two-scale homogenization of \emph{dipole} body sources, which may be relevant to some recent efforts to manipulate waves in metamaterials via e.g. a piezoelectric effect. The analysis presented herein, which amounts to a \emph{single-scale} expansion of the Willis' effective model, is inherently applicable to other asymptotic regimes such as the long-wavelength, finite-frequency (LW-FF) behavior which could be used to establish an effective description of the band gap(s) inside the first Brioullin zone. 

\section{Appendix}\label{App}

{\bf Proof of Lemma \ref{LemmaCellBasis}}:
From~(\ref{CellPDEfw})--(\ref{CellPDEfv}) and integration by parts, one has
\begin{eqnarray*}
-\int_Y \omega^2 \rho(\bx) \tilde{w}(\bx) \zeta_j(\bx)\, \textrm{d}\bx + \int_Y \big( G(\bx) \nabla_{\!\bk} \tilde{w}(\bx) \big) \cdot \overline{\nabla_{\!\bk} \zeta_m}(\bx)\, \textrm{d}\bx &=& \int_Y \zeta_m(\bx) dx, \\
-\int_Y \omega^2 \rho(\bx) \tilde{v}_j(\bx) \zeta_m(\bx)\, \textrm{d}\bx + \int_Y \big( G(\bx) (\nabla_{\!\bk} \tilde{v}_j(\bx) - \be_j )\big) \cdot \overline{\nabla_{\!\bk}\zeta _m}(\bx)\, \textrm{d}\bx &=& 0, 
\end{eqnarray*}
where $\zeta_m=\bzeta\!\cdot\!\be_m$, and~$\bzeta$ solves~\eqref{CellBasis}. Then 
\begin{eqnarray*}
&&-\int_Y \omega^2 \rho(\bx) \tilde{w}(\bx) \zeta_m(\bx)\, \textrm{d}\bx  + \int_Y \big( G(\bx) \nabla_{\!\bk} \tilde{w}(\bx) \big) \cdot \be_m \, \textrm{d}\bx ~=~ \\
&& \qquad \int_Y \zeta_m(\bx) \, \textrm{d}\bx - 
\int_Y \nabla_{\!\bk}\tilde{w}(\bx)\cdot \big( G(\bx)(\overline{\nabla_{\!\bk} \bzeta}_m(\bx) -\be_m) \big)\, \textrm{d}\bx ~=~ \int_Y \zeta_m(\bx) \, \textrm{d}\bx, \\
&&-\int_Y \omega^2 \rho(\bx) \tilde{v}_j(\bx) \zeta_m(\bx)\, \textrm{d}\bx + \int_Y \big( G(\bx) (\nabla_{\!\bk} \tilde{v}_j(\bx) - \be_j )\big) \cdot \be_m\, \textrm{d}\bx ~=~ \\
&& \qquad -\int_Y (\nabla_{\!\bk} \tilde{v}_j(\bx) - \be_j)\cdot \big( G(\bx)(\overline{\nabla_{\!\bk} \bzeta}_m(\bx) -\be_m) \big) \, \textrm{d}\bx ~=~ \int_Y  G(\bx) \be_j \big(\overline{\nabla_{\!\bk} \bzeta}_m(\bx)-\be_m\big) \, \textrm{d}\bx.
\end{eqnarray*}
Averaging above equations yields
\begin{eqnarray*}
\langle G \nabla_{\!\bk} \tilde{w} \rangle &\!\!\!=\!\!\!& \langle \overline{\bzeta} \rangle + \omega^2 \langle \rho \tilde{w} \overline{\bzeta} \rangle,  \\
\langle G \nabla_{\!\bk} \tilde{\bv} \rangle \cdot \be_m &\!\!\!=\!\!\!& \langle G \overline{\nabla_{\!\bk} \zeta_m} \rangle + \omega^2 \langle \rho \tilde{\bv} \zeta_m\rangle, 
\end{eqnarray*}
which proves the lemma. \proofend

\medskip

{\bf Proof of Lemma \ref{ReciprocityLemma}}: Thanks to Lemma~\ref{LemmaCellBasis}, we have 
\begin{eqnarray*}
\langle G \nabla_{\!\bk} \tilde{v}_j \rangle \cdot \be_m ~=~ \langle G \overline{\nabla_{\!\bk} \zeta_m} \rangle \cdot \be_j+ \omega^2 \langle \rho \tilde{v}_j \overline{\zeta}_m\rangle. 
\end{eqnarray*} 
On the other hand the multiplication of~(\ref{CellBasis}), written for~$\zeta_m$,  by $\overline{\zeta}_j$ and integration by parts yields
\begin{eqnarray*}
\langle G \overline{\nabla_{\!\bk} \zeta_j} \rangle \cdot \be_m  ~=~ \langle G \overline{\nabla_{\!\bk} \zeta_j} \nabla_{\!\bk} \zeta_m \rangle \qquad \Longrightarrow \qquad \langle G \overline{\nabla_{\!\bk} \zeta_m} \rangle \cdot \be_j ~=~ \overline{\langle G \overline{\nabla_{\!\bk} \zeta_j} \rangle \cdot \be_m}. 
\end{eqnarray*} 
From the eigenfunction expansion~(\ref{Eigenexpansionv}) of~$\tilde{\bv}$ one can obtain that for~$\bzeta$ by setting $\omega=0$, whereby 
\begin{eqnarray*}
\langle \rho \tilde{v}_j \overline{\zeta}_m\rangle ~=~ \sum_{n=1}^\infty  \frac{(G\nabla_{\!\bk}\tilde{\phi}_n,\be_m)}{  \tilde{\lambda}_n} \frac{ (\be_j, G \nabla_{\!\bk}\tilde{\phi}_n )  }{  \tilde{\lambda}_n - \omega^2  }.
\end{eqnarray*} 
This demonstrates that $\langle \rho \tilde{v}_j \overline{\zeta}_m\rangle = \overline{\langle \rho \tilde{v}_m \overline{\zeta}_j\rangle }$. As a result, $\langle G \nabla_{\!\bk} \tilde{v}_j \rangle \cdot \be_m = \overline{ \langle G \nabla_{\!\bk} \tilde{v}_m \rangle} \cdot \be_j$ which completes the proof. \proofend

\medskip

{\bf Proof of Lemma \ref{RelationsLemma}}: From~\eqref{CellPDEfv}, \eqref{CellPDEfvBC}, (\ref{Eigenexpansionv}), (\ref{Eigenexpansionw}), and~(\ref{CellBasis}) we have 
\begin{eqnarray*}
\tilde{w}~=~\sum_{n=1}^\infty  \frac{ (1, \tilde{\phi}_n) }{\tilde{\lambda}_n - \omega^2}\tilde{\phi}_n  , \quad
\tilde{\bv}~=~\sum_{n=1}^\infty  \frac{ (1, G \nabla_{\!\bk}\tilde{\phi}_n)}{\tilde{\lambda}_n - \omega^2} \tilde{\phi}_n, \quad
\tilde{\zeta}_m~=~ \sum_{n=1}^\infty  \frac{ (\be_m, G \nabla_{\!\bk}\tilde{\phi}_n)}{\tilde{\lambda}_n } \tilde{\phi}_n.
\end{eqnarray*}
On computing the $Y$-average of~(\ref{Eigensystem}), one finds
\begin{eqnarray*}
-i{\bk} \cdot \langle G \nabla_{\!\bk} \tilde{\phi}_{n} \rangle ~=~ \tilde{\lambda}_{n}  \langle \rho \tilde{\phi}_{n} \rangle.
\end{eqnarray*}
Since~$\tilde{\lambda}_n\in\mathbb{R}$, this further implies 
\begin{eqnarray*}
i\bk \!\cdot\! \langle \tilde{\bv} \rangle &\!\!\!\!=\!\!\!\!& 
\sum_{n=1}^\infty  \frac{ (i\bk, G \nabla_{\!\bk}\tilde{\phi}_n)}{\tilde{\lambda}_n - \omega^2} \langle\tilde{\phi}_n\rangle \:=\: \sum_{n=1}^\infty  \frac{ \tilde{\lambda}_{n}  (1, \rho \tilde{\phi}_{n} ) }{\tilde{\lambda}_n - \omega^2} \langle\tilde{\phi}_n\rangle \:=\: \sum_{n=1}^\infty   (1, \rho \tilde{\phi}_{n}) \langle\tilde{\phi}_n\rangle +\omega^2 \sum_{n=1}^\infty  \frac{  (1, \rho \tilde{\phi}_{n} ) }{\tilde{\lambda}_n - \omega^2} \langle\tilde{\phi}_n\rangle \\
&\!\!\!\!=\!\!& 1+\omega^2 \overline{\langle \rho \tilde{w} \rangle}, 
\end{eqnarray*}
which is the first claim of the lemma. Note that every infinite series in the above expression is convergent since $\tilde{w} \in L^2(Y)$ and $\tilde{v}_j \in L^2(Y)$. From Lemma~\ref{LemmaCellBasis}, we have 
\begin{eqnarray*}
\langle G \nabla_{\!\bk} \tilde{w} \rangle \cdot \be_m &\!\!\!\!=\!\!\!\!& \langle \overline{\zeta_m} \rangle + \omega^2 \langle \rho \tilde{w} \overline{\zeta}_m \rangle = \sum_{n=1}^\infty  \frac{\langle G \nabla_{\!\bk}\tilde{\phi}_n \rangle \cdot \be_m}{\tilde{\lambda}_n } \langle \overline{\tilde{\phi}_n} \rangle + 
\omega^2 \sum_{n=1}^\infty  \frac{ \langle G \nabla_{\!\bk}\tilde{\phi}_n \rangle \cdot \be_m}{\tilde{\lambda}_n (\tilde{\lambda}_n - \omega^2) } (1,\tilde{\phi}_n) \\
&\!\!\!\!=\!\!\!\!& \sum_{n=1}^\infty  \frac{ \langle G \nabla_{\!\bk}\tilde{\phi}_n \rangle \cdot \be_m}{  \tilde{\lambda}_n - \omega^2  } (1,\tilde{\phi}_n) ~=~ \overline{\langle \tilde{v}_m \rangle}, 
\end{eqnarray*}
which establishes the third claim. Thanks to Lemma~\ref{ReciprocityLemma}, one can show that 
\begin{eqnarray*}
\be_m \cdot \langle G \nabla_{\!\bk} (\tilde{\bv} \cdot i\bk) \rangle  &\!\!\!\!=\!\!\!\!&  \sum_j ik_j\langle G \nabla_{\!\bk} \tilde{v}_j \rangle \cdot \be_m ~=~ \sum_j ik_j \overline{\langle G \nabla_{\!\bk} \tilde{v}_m \rangle} \cdot \be_j ~=~ \overline{\langle G \nabla_{\!\bk} \tilde{v}_m \rangle} \cdot i\bk.
\end{eqnarray*}
On computing the $Y$-average of~(\ref{CellPDEfv}) written for~$\tilde{v}_m$, we find  
\begin{eqnarray*}
\overline{\langle G \nabla_{\!\bk} \tilde{v}_m \rangle} \cdot i\bk ~=~ \langle G \rangle \hh i \bk \cdot \be_m + \omega^2 \overline{\langle \rho \tilde{v}_m \rangle}, 
\end{eqnarray*}
so that~$\langle G \nabla_{\!\bk} (\tilde{\bv} \cdot i\bk) \rangle = i\bk \langle G\rangle + \omega^2 \overline{\langle \rho \tilde{\bv} \rangle}$. This proves the theorem.\proofend

\medskip

{\bf Proof of Lemma \ref{ComparisonSecondOrderHomogenizationeta0chi1}}: On multiplying~(\ref{Comparisoneta0}) by~$\bchi^{\mbox{\tiny{(1)}}}$ and integrating by parts, one obtains
\begin{eqnarray*}
-\int_Y (G\nabla\eta^{\mbox{\tiny{(0)}}}) \cdot\! \nabla \bchi^{\mbox{\tiny{(1)}}}\, \textrm{d}\bx  ~=~ 
\int_Y \frac{\rho\!-\!\rho_0}{\rho_0} \, \bchi^{\mbox{\tiny{(1)}}} \textrm{d}\bx.
\end{eqnarray*}
Since 
\begin{eqnarray*}
- \int_Y \nabla \eta^{\mbox{\tiny{(0)}}}\! \cdot G \big( \nabla \bchi^{\mbox{\tiny{(1)}}}\! + \boldsymbol{I} \big)\, \textrm{d}\bx ~=~
\int_Y  \eta^{\mbox{\tiny{(0)}}}\, \nabla \!\cdot\! \big(G(\nabla\bchi^{\mbox{\tiny{(1)}}} \!+\boldsymbol{I})\big)\, \textrm{d}\bx ~=~ \bzero 
\end{eqnarray*}
thanks to~\eqref{ComparisonEqnchi1}, it follows that 
\begin{eqnarray*}
\int_Y \big(G \nabla \eta^{\mbox{\tiny{(0)}}}  \big)\cdot  \boldsymbol{I}\, \textrm{d}\bx  ~=~ \int_Y \frac{\rho-\rho_0}{\rho_0} \bchi^{\mbox{\tiny{(1)}}} \textrm{d}\bx.
\end{eqnarray*}
On dividing the last equation by~$|Y|$ and recalling that $\langle\bchi^{\mbox{\tiny{(1)}}}\rangle=\bzero$, one completes the proof. \proofend

\medskip

{\bf Proof of Lemma \ref{LemmaEigenLeadingOrderSymmetry}}: Multiplying the $j$th component of~(\ref{ComparisonEqnchi1}) by $\chi_{\ell}^{\mbox{\tiny{(1)}}}=\bchi^{\mbox{\tiny{(1)}}}\!\cdot\be_\ell$ and integrating by parts gives 
\begin{eqnarray*} 
\int_Y \big( G (\nabla \chi^{\mbox{\tiny{(1)}}}_j +\boldsymbol{e}_j) \big) \cdot \nabla \chi^{\mbox{\tiny{(1)}}}_\ell\, \textrm{d}\bx ~=~ 0, 
\end{eqnarray*}
by which   
\begin{eqnarray*} 
\int_Y  G\hh \chi^{\mbox{\tiny{(1)}}}_{\ell,j}\, \textrm{d}\bx ~=\; - \int_Y  G\hh \nabla \chi^{\mbox{\tiny{(1)}}}_j \cdot \nabla \chi^{\mbox{\tiny{(1)}}}_\ell\, \textrm{d}\bx 
\qquad \Longrightarrow \qquad \langle G  \chi^{\mbox{\tiny{(1)}}}_{\ell,j}  \rangle ~=~ \langle G \chi^{\mbox{\tiny{(1)}}}_{j,\ell} \rangle 
\end{eqnarray*}
as claimed by the lemma. 
\proofend

\medskip

{\bf Proof of Lemma \ref{ComparisonSecondOrderHomogenizationAlpha1}}: Consider~(\ref{Comparisonalpha1}) written for~$\alpha^{\mbox{\tiny{(1)}}}_k=\balpha^{\mbox{\tiny{(1)}}}\!\cdot\be_k$. On multiplying this equation by $\chi^{\mbox{\tiny{(1)}}}_\ell$ and integrating by parts, one obtains 
\begin{eqnarray*}  
-\int_Y  \big( G \nabla \alpha^{\mbox{\tiny{(1)}}}_k \big) \cdot \nabla \chi^{\mbox{\tiny{(1)}}}_\ell \, \textrm{d}\bx ~=~ \int_Y \big(\rho \chi^{\mbox{\tiny{(1)}}}_k - \rho^{\mbox{\tiny{(1)}}}_k \big) \chi^{\mbox{\tiny{(1)}}}_\ell \, \textrm{d}\bx.
\end{eqnarray*}
Since 
\begin{eqnarray*}
- \int_Y \nabla  \alpha^{\mbox{\tiny{(1)}}}_k \cdot G \big( \nabla \chi^{\mbox{\tiny{(1)}}}_\ell + \boldsymbol{e}_\ell \big)\, \textrm{d}\bx ~=~
\int_Y   \alpha^{\mbox{\tiny{(1)}}}_k \: \nabla \!\cdot\! \Big( G \big( \nabla \chi^{\mbox{\tiny{(1)}}}_\ell + \boldsymbol{e}_\ell \big) \Big)\, \textrm{d}\bx ~=~ 0 
\end{eqnarray*}
thanks to~\eqref{ComparisonEqnchi1}, we have 
\begin{eqnarray*}  
\int_Y  \big( G \nabla \alpha^{\mbox{\tiny{(1)}}}_k \big)\cdot  \boldsymbol{e}_\ell \, \textrm{d}\bx ~=~ \int_Y \big(\rho \chi^{\mbox{\tiny{(1)}}}_k - \rho^{\mbox{\tiny{(1)}}}_k \big) \chi^{\mbox{\tiny{(1)}}}_\ell \, \textrm{d}\bx.
\end{eqnarray*}
Since~$\rho^{\mbox{\tiny{(1)}}}_k$ is a constant and~$\langle\chi_\ell^{\mbox{\tiny{(1)}}}\rangle=0$, the above equation yields 
$$
\langle G \alpha^{\mbox{\tiny{(1)}}}_{k,\ell} \rangle ~=~ \langle \rho \chi^{\mbox{\tiny{(1)}}}_k \chi^{\mbox{\tiny{(1)}}}_\ell \rangle 
$$
as claimed by the lemma.  \proofend

%\ack{Insert acknowledgment text here.}

%%%%%%%%%% Insert bibliography here %%%%%%%%%%%%%%


\begin{thebibliography}{9}

\bibitem{A92} G. Allaire (1992). Homogenization and two-scale convergence. \emph{SIAM J. Math. Anal.} {\bf 23}, 1482--1518.

\bibitem{ABV16} G. Allaire, M. Briane and M. Vanninathan (2016). A comparison between two-scale asymptotic expansions and Bloch wave expansions for the homogenization of periodic structures, \emph{SeMA J.} {\bf 73}, 237--259.

\bibitem{AB08} I. V. Andrianov, V. I. Bolshakov, V. V. Danishevskyy and D. Weichert (2008). Higher order asymptotic homogenization and wave propagation in periodic composite materials, \emph{Proc. R. Soc. A} {\bf 464}, 1181--1201.

\bibitem{B74} N. S. Bakhvalov (1974). Homogenized properties of periodically heterogeneous solids, \emph{Dokl. Akad.
Nauk SSSR} {\bf 218}, 1046--1048 (in Russian).

\bibitem{B76} I. Babuska (1976) Homogenisation approach in engineering. \emph{Lectures Notes in Economics and
Mathematical Systems}, {\bf 134}, 137–-153, Springer.

\bibitem{BA93} C. Boutin and J. L. Auriault (1993). Rayleigh scattering in elastic composite materials, \emph{Int. J. Eng. Sci.}, {\bf 31}, 1669--1689.

\bibitem{B53} L. Brioullin (1953). \emph{Wave propagation in periodic structures}, Dover. 

\bibitem{CGM16} F. Cakoni, B. B. Guzina and S. Moskow (2016). On the homogenization of a scalar scattering problem for highly oscillating anisotropic media, \emph{SIAM J. Math. Anal.} {\bf 48}, 2532--2560.

\bibitem{CF01} W. Chen and J. Fish (2001). A dispersive model for wave propagation in periodic heterogeneous media based on homogenization with multiple spatial and temporal scales, \emph{ASME J. Appl. Mech.} {\bf 68}, 153--161.

\bibitem{CW13} S. Chen, G. Wang, J. Wen and X. Wen (2013). Wave propagation and attenuation in plates with periodic arrays of shunted piezo-patches. \emph{J. Sound Vibr.} {\bf 332}, 1520--1532.

\bibitem{FG08} M. Farhat, S. Guenneau, S. Enoch, A. Mochvan, F. Zolla and A. Nicolet (2008). A homogenization route towards square cylindrical acoustic cloaks, \emph{New Journal of Physics}, {\bf 10}, 115030.

\bibitem{H13} F. Hecht (2013). New development in FreeFem++, \emph{J. Num. Math.} {\bf 20}, 251--266.

\bibitem{K12} P. A. Kuchment (2012). \emph{Floquet Theory for Partial Differential Equations}, Birkh{\"a}user. 

\bibitem{RSB15} S. A. Lambert, S. P. Nasholm, D. Nordsletten, C. Michler, L. Juge, J. M. Serfaty, L. Bilston, B. Guzina, S. Holm and R. Sinkus (2015). Bridging three orders of magnitude: multiple scattered waves sense fractal microscopic structures via dispersion, \emph{Phys. Rev. Lett.}, {\bf 115}, 094301.

\bibitem{M13} M. Maldovan (2013), Sound and heat revolutions in phononics, \emph{Nature} {\bf 503}, 209--217.   

\bibitem{MW07} G. Milton and J. Willis (2007). On modifications of Newton’s second law and linear continuum
elastodynamics,\emph{Proc. R. Soc. A} {\bf 463}, 855--880.

\bibitem{MBW06} G. Milton, M. Briane and J. Willis (2006). On cloaking for elasticity and physical equations with
a transformation invariant form, \emph{New J. Phys.} {\bf 8}, 248--267.

\bibitem{NHA15} H. Nassar, Q.-C. He, N. Auffray (2015). Willis' elastodynamic homogenization theory revisited for periodic media,  \emph{J. Mech. Phys. Solids} {\bf 77}, 158--178.

\bibitem{NHA16} H. Nassar, Q.-C. He, N. Auffray (2016). On asymptotic elastodynamic homogenization approaches for periodic media,  \emph{J. Mech. Phys. Solids} {\bf 88}, 274--290.

\bibitem{NCN17} H. Nassar, H. Chen H, A. N. Norris, M. R. Haberman and G. L. Huang (2017). Non-reciprocal wave propagation in modulated elastic metamaterials, \emph{Proc. R. Soc. A} {\bf 473}, 20170188. 

\bibitem{NSK12} A. N. Norris, A. L. Shuvalov and A. A. Kutsenko (2012). Analytical formulation of three-dimensional dynamic homogenization for periodic elastic systems,  \emph{Proc. R. Soc. A} {\bf 468}, 1629--1651.

\bibitem{PBL78} G. Papanicolau, A. Bensoussan and J.-L. Lions (1978). \emph{Asymptotic Analysis for Periodic Structures}, North-Holland.

\bibitem{PA06} W. J. Parnell and I. D. Abrahams (2006). Dynamic homogenization in periodic fibre reinforced media. Quasi-static limit for SH waves. \emph{Wave Motion} {\bf 43}, 474--498.

\bibitem{SN14}  A. Srivastava and S. Nemat-Nasser (2014). On the limit and applicability of dynamic homogenization, \emph{Wave Motion} {\bf 51}, 1045--1054.

\bibitem{WG15} A. Wautier and B. Guzina (2015). On the second-order homogenization of wave motion in periodic media and the sound of a chessboard, \emph{J. Mech. Phys. Solids} {\bf 78}, 382--414.

\bibitem{W80} J. Willis (1980). Polarization approach to the scattering of elastic waves--I. Scattering by a single
inclusion,\emph{J. Mech. Phys. Solids} {\bf 28}, 287--305.

\bibitem{W81a} J. Willis (1981). Variational and related methods for the overall properties of composites, \emph{Adv.
Appl. Mech.} {\bf 21}, 1--78.

\bibitem{W81b} J. Willis (1981). Variational principles and operator equations for electromagnetic waves in
inhomogeneous media, \emph{Wave Motion} {\bf 6}, 127--139.

\bibitem{W83} J. Willis (1983). The overall elastic response of composite materials, \emph{J. Appl. Mech. ASME} {\bf 50},
1202--1209.

\bibitem{W84} J. Willis (1984). Variational principles for dynamic problems for inhomogeneous elastic media, \emph{Wave Motion} {\bf 3}, 1--11.

\bibitem{W85} J. Willis (1985). The non-local influence of density variations in a composite, \emph{Int. J. Solids Struct.} {\bf
210}, 805--817.

\bibitem{W97} J. Willis (1997). \emph{Dynamics of Composites}, Continuum Micromechanics, Springer.

\bibitem{W09} J. Willis (2009). Exact effective relations for dynamics of a laminated body, \emph{Mechanics of Materials} {\bf 41}, 385--393.

\bibitem{W11} J. Willis (2011). Effective constitutive relations for waves in composites and metamaterials, \emph{Proc. R. Soc. A} {\bf 467}, 1865--1879. 

\bibitem{ZC11} J. Zhu, J. Christensen, J. Jung, L. Martin-Moreno, X. Yin, L. Fok, X. Zhang, and F.J. Garcia-Vidal (2011). A holey-structured metamaterial for acoustic deep-subwavelength imaging, \emph{Nature Physics}, {\bf 7}, 52--55.

\end{thebibliography}
\end{document}